\theoremstyle{plain}
\newtheorem{theorem}{Theorem}[section]
\newtheorem{cor}[theorem]{Corollary}
\newtheorem{prop}[theorem]{Proposition}
\newtheorem{lemma}[theorem]{Lemma}
\theoremstyle{definition}
\newtheorem{definition}[theorem]{Definition}
\newtheorem{example}[theorem]{Example}
\newtheorem{remark}[theorem]{Remark}
\newcommand{\Deltaop}{\Delta^{op}}
\newcommand{\Thetanop}{\Theta_n^{op}}
\newcommand{\SSets}{s\mathcal{S}ets}
\newcommand{\Map}{\operatorname{Map}}
\newcommand{\map}{\operatorname{map}}
\newcommand{\Mapobj}{\operatorname{\underbar{Map}}}
\newcommand{\mapobj}{\operatorname{\underbar{map}}}
\newcommand{\Hom}{\operatorname{Hom}}
\newcommand{\colim}{\operatorname{colim}}
\newcommand{\hocolim}{\operatorname{hocolim}}
\newcommand{\Sets}{\mathcal{S}ets}
\newcommand{\Secat}{\mathcal Se \mathcal Cat}
\newcommand{\cosk}{\text{cosk}}
\newcommand{\css}{\mathcal{CSS}}
\newcommand{\Ho}{\operatorname{Ho}}
\newcommand{\ob}{\operatorname{ob}}
\newcommand{\Se}{\operatorname{Se}}
\newcommand{\Thetansp}{\Theta_nSp}
\newcommand{\id}{\operatorname{id}}
\newcommand{\diag}{\operatorname{diag}}
\newcommand{\heq}{\operatorname{heq}}
\newcommand{\sPsh}{s\mathcal{PS}h}
\newcommand{\Coll}{\operatorname{Coll}}
\newcommand{\Cpt}{\operatorname{Cpt}}
\newcommand{\Rec}{\operatorname{Rec}}
\newcommand{\acss}{\mathcal{ACSS}}
\def\noloc{\;{:}\,}
\begin{document}

\title[$(\infty,n)$-categories]{Comparison of models for $(\infty, n)$-categories, II}

\author[J.E.\ Bergner]{Julia E.\ Bergner}

\address{Department of Mathematics, University of Virginia, Charlottesville, VA}

\email{bergnerj@member.ams.org}

\author[C.\ Rezk]{Charles Rezk}

\address{Department of Mathematics, University of Illinois at Urbana-Champaign, Urbana, IL}

\email{rezk@math.uiuc.edu}

\date{\today}

\subjclass[2010]{55U35, 55U40, 18D05, 18D15, 18D20, 18G30, 18G55}

\keywords{$(\infty, n)$-categories, $\Theta_n$-spaces, complete Segal objects}

\thanks{The first-named author was partially supported by NSF grants DMS-1105766 and DMS-1352298.  The second-named author was partially supported by NSF grant DMS-1006054.  Much of this work was done while both authors were in residence at MSRI in Spring 2014, supported by NSF grant 0932078 000.}

\begin{abstract}
In this paper we complete a chain of explicit Quillen equivalences between the model category for $\Theta_{n+1}$-spaces and the model category of small categories enriched in $\Theta_n$-spaces.  The Quillen equivalences given here connect Segal category objects in $\Theta_n$-spaces, complete Segal objects in $\Theta_n$-spaces, and $\Theta_{n+1}$-spaces.
\end{abstract}

\maketitle

\section{Introduction}

In \cite{rezktheta}, the second-named author developed the theory of $\Theta_n$-spaces as models for $(\infty, n)$-categories and showed that they are the fibrant objects in a cartesian model category.   Since one of the expected properties of a good model for $(\infty, n)$-categories is that categories enriched in them should be models for $(\infty, n+1)$-categories, a natural conjecture was that this property should hold for $\Theta_n$-spaces.

The first step in proving this conjecture was establishing a model structure on the category of small categories enriched in $\Theta_n$-spaces.  Because the model category for $\Theta_n$-spaces is sufficiently nice (with the cartesian property being particularly key), we were able to use a theorem of Lurie to obtain the desired model structure, as we showed in \cite{part1}.

While a direct Quillen equivalence between the two model categories was not expected, there was a conjectured chain of Quillen equivalences between intermediate model categories, mostly following by analogy from the case of $(\infty, 1)$-categories, where the simplicial category and complete Segal space model categories were shown to be equivalent \cite{thesis}.  In \cite{part1}, we established the first two Quillen equivalences in this chain, proving that categories enriched in $\Theta_n$-spaces are equivalent to  Segal category objects in $\Theta_n$-spaces.

In this paper, we complete this conjectured chain of Quillen equivalences.  We prove that there is a Quillen equivalence between the model structure for Segal category objects in $\Theta_n$-spaces and the model structure for complete Segal objects in $\Theta_n$-spaces.  We also establish one final Quillen equivalence between the model structure for complete Segal objects in $\Theta_n$-spaces and the model structure for $\Theta_{n+1}$-spaces.  This equivalence is not present in the case when $n=0$, since the two notions coincide precisely as complete Segal spaces.

This last Quillen equivalence is the first in a chain interpolating between the $\Theta_n$-space model and the $n$-simplicial model for $(\infty, n)$-categories as defined by Barwick and Lurie \cite{lurie2}, \cite{luriecob}.   Both models were already known to be equivalent on the level of $(\infty,1)$-categories using the axiomatic methods of Barwick and Schommer-Pries \cite{bsp}, but we give an alternative approach using an explicit Quillen equivalence on the level of model categories.  Another approach to some of the results in our work, again on the level of $(\infty, 1)$-categories, is given by Haugseng \cite{haugseng}, \cite{haugseng2}.  In addition to providing a comparison which employs the full strength of model categories, rather than their underlying $(\infty,1)$-categories, our proof relies on different tools than these other approaches.  In particular, we work primarily in the context of localized model structures for presheaf categories and do not require the framework of category theory in quasi-categories.

Furthermore, much of the present paper is worked in a substantially more general context than simply in $(\infty,n)$-categories.  The categories $\Theta_n$ are examples of categories formed by the $\Theta$-construction, and the model category framework for $\Theta_n$-spaces of \cite{rezktheta} is developed in this more general setting.  Here, we define complete Segal objects in this same degree of generality and make the comparison accordingly.  However, more general Segal category objects present considerable technical difficulties, so that our comparison here only holds between Segal category objects and complete Segal objects in $\Theta_n$-spaces.

Many other approaches to $(\infty, n)$-categories are being investigated; some references include \cite{ara}, \cite{bk}, \cite{hs}, \cite{lurie2}, \cite{luriecob}, \cite{pel}, \cite{simpson}, and \cite{verity}.  We give a more substantial survey in the first paper in this series \cite{part1}.

We give a brief outline of the contents of the paper.  In Section 2, we set up some notation and review basic facts about simplicial presheaf categories and cartesian structures.  Section 3 is concerned with the $\Theta$-construction and relations between various presheaf categories.  In Sections 4 and 5, we develop the model structures for $\Theta$-objects and complete Segal objects, respectively.  In Section 6, we give a Quillen equivalence between these model structures; Section 7 states the results in the special case of models for $(\infty, n)$-categories.  We consider various notions of equivalence of Segal space objects in Section 8, and these results are used in Section 9 to give the Quillen equivalence between Segal category objects and complete Segal objects.

\section{Notation and background}

\subsection{Simplicial objects and presheaf categories} \label{presheaf}

We write $\Delta$ for the simplicial indexing category, with objects
\[ [n]=\{0 \leq 1 \leq \cdots \leq n\} \] and morphisms weakly order-preserving maps.  We write $\delta^{k_0,\dots,k_m}$ for the map $[m] \rightarrow [n]$ sending $i \mapsto k_i$ for each $0 \leq i \leq m$.

We sometimes extend the notation for objects in the following way.  For $p \leq q$ we write $[p,q]=\{p \leq p+1 \leq \cdots \leq q\}$.  As an object of $\Delta$, it is identified with $[q-p]$; the notation is meant to indicate that it is equipped with a distinguished inclusion $[p,q] \rightarrow [m]$ for $m \geq q$, where for each $p \leq i \leq q$ we have $i \mapsto i$.

Let $\SSets$ denote the category of simplicial sets, or functors $\Deltaop \rightarrow \Sets$.   For a small category $\mathcal C$, let $\sPsh(\mathcal C)$ denote the category of presheaves of simplicial sets on $\mathcal C$, or functors $\mathcal C^{op} \rightarrow \SSets$.  We write $F=F_{\mathcal C} \colon \mathcal C \rightarrow \sPsh(\mathcal C)$ for the Yoneda embedding, defined by the representable object $F_\mathcal C(c) = \Hom(-, c)$ for any object $c$ of $\mathcal C$, where the right-hand side is regarded as a discrete simplicial set.

Given a functor $f \colon \mathcal C \rightarrow \mathcal D$, we have a sequence of adjoint functors $f_\#\dashv f^*\dashv f_*$ of the form
\[\xymatrix{ {\sPsh(\mathcal C)} \ar@<-6ex>[d]^{f_\#} \ar@<6ex>[d]^{f_*}
\\ {\sPsh(\mathcal D)} \ar[u]_{f^*} }\]
where $f^*$ is the restriction functor, defined by $(f^*X)(c)=X(fc)$ for any object $c$ of $\mathcal C$. We note that there is a natural isomorphism of functors
\[f_\# F_{\mathcal C} \xrightarrow{\cong} F_{D} \colon \mathcal C \rightarrow \sPsh(\mathcal D) \]
obtained using the adjointness of $f_\#$ and $f^*$.

We make use of the following observation: if $f\colon \mathcal C \rightleftarrows \mathcal D \noloc g$ is an adjoint pair, then there are natural isomorphisms $f^* \cong g_\#$ and $f_* \cong g^*$, and thus
an adjoint sequence of the form $f_\# \dashv f^* \cong g_\# \dashv f_* \cong g^*\dashv g_*$.

Let 1 denote the category with a single object and no non-identity morphisms, suppose that $\mathcal C$ is equipped with a terminal object $t_\mathcal C$, and consider the resulting adjoint pair $p \colon \mathcal C \rightleftarrows 1 \noloc t$, where $t(1)=t_\mathcal C$.  Because $p \circ t = \id$, the functor $p^*\cong t_\# \colon \SSets = \sPsh(1) \rightarrow \sPsh(\mathcal C)$, which sends a simplicial set $X$ to the constant functor with value $X$,
is fully faithful.  Thus $\SSets$ is equivalent to the full subcategory of constant functors in $\sPsh(\mathcal C)$, and we sometimes make this identification silently.

More generally, if $\mathcal C$ and $\mathcal D$ each have terminal objects, then we obtain full embeddings $\pi_\mathcal C^*\colon \sPsh(\mathcal C) \rightarrow \sPsh(\mathcal C \times \mathcal D)$ and $\pi_\mathcal D^*\colon \sPsh(\mathcal D) \rightarrow \sPsh(\mathcal C \times \mathcal D)$, induced by the two projection functors.

\setcounter{theorem}{1}

\begin{example}
We are most interested in the categories $\Delta$ and $\Theta_n$.  (We review the definition of $\Theta_n$ is the next section.)  Both of these categories have terminal objects, so $\sPsh(\Delta)$ and $\sPsh(\Theta_n)$ can be regarded as full subcategories of $\sPsh(\Delta \times \Theta_n)$.  In particular, we regard both Yoneda functors $F_\Delta$ and $F_{\Theta_n}$ as objects of $\sPsh(\Delta \times \Theta_n)$, and $F_{\Delta \times \Theta_n}([m], \theta) \cong F_\Delta([m]) \times F_{\Theta_n}$.  For simplicity, we write $F_\Theta$ for $F_{\Theta_n}$.
\end{example}

Unless indicated otherwise, when considering $\sPsh(\mathcal C)$, we assume it has the injective model structure, where weak equivalences and cofibrations are defined levelwise in $\SSets$.  We often localize this model structure with respect to a set of maps $\mathcal S$, and denote the resulting model structure by $\sPsh(\mathcal C)_{ \mathcal S}$.  An $\mathcal S$-\emph{fibrant object} is an injective fibrant object $X$ such that $\Map_{\sPsh(\mathcal C)}(f,X)$ is a weak equivalence for all $f \in \mathcal{S}$.  An $\mathcal{S}$-\emph{local object} is any object levelwise weakly equivalent to an $\mathcal{S}$-fibrant object.  We write $\overline{\mathcal{S}}$ for the class of weak equivalences in this model category.  The pair $(\mathcal C, \mathcal S)$ of a small category $\mathcal C$ together with a set $\mathcal S$ of maps in $\sPsh(\mathcal C)$ is sometimes called a \emph{presentation}.

Throughout this paper morphisms between two objects can take several different forms.  By $\Hom$ we denote morphism set, by $\Map$ or $\map$ we denote morphism simplicial set, and by $\Mapobj$ or $\mapobj$ we denote internal hom object in the relevant category.  (In some places we have additional notation to clarify the category in which we take this internal hom; this notation will be explained as needed.)

\setcounter{subsection}{2}

\subsection{Cartesian structures}

The additional structure of a cartesian model category is a key feature of some of the model categories which we consider in this paper.  We give a brief review of the definitions here, and the reader is referred to \cite[\S 2]{rezktheta} for a detailed treatment.

\setcounter{theorem}{3}

\begin{definition}
A category $\mathcal C$ is \emph{cartesian closed} if it has finite products and, for any two objects $X$ and $Y$ of $\mathcal C$, an internal function object $Y^X$, together with a natural isomorphism
\[ \Hom_\mathcal C(Z, Y^X) \cong \Hom_\mathcal C(Z \times X, Y) \]
for any third object $Z$ of $\mathcal C$.
\end{definition}

If a cartesian closed category additionally has a model structure, we can ask if these two structures are compatible, in the following sense.

\begin{definition}
A model category $\mathcal M$ is \emph{cartesian} if its underlying category is cartesian closed, the terminal object is cofibrant, and the following equivalent conditions hold.
\begin{enumerate}
\item If $f \colon A \rightarrow A'$ and $g \colon B \rightarrow B'$ are cofibrations in $\mathcal M$ , then the induced map
\[ h \colon A \times B' \coprod_{A \times B} A' \times B \rightarrow A' \times B' \]
is a cofibration.  If either $f$ or $g$ is a weak equivalence then so is $h$.

\item If $f \colon A \rightarrow A'$ is a cofibration and $p \colon X' \rightarrow X$ is a fibration in $\mathcal M$, then the induced map
\[ q \colon (X')^{A'} \rightarrow (X')^A \times_{X^A} X^{A'} \]
is a fibration.  If either $f$ or $p$ is a weak equivalence, then so is $q$.
\end{enumerate}
\end{definition}

We use the following proposition, which was proved in the special case of simplicial spaces in \cite[9.2]{rezk}.

\begin{prop} \label{cartesian}  \cite[2.10, 2.22]{rezktheta}
Let $\mathcal C$ be a small category and $F_\mathcal C \colon \mathcal C \rightarrow \sPsh(\mathcal C)$ the Yoneda embedding. Consider the category $\sPsh(\mathcal C)$ with the injective model structure and consider its localization with respect to a set $\mathcal S$ of maps.  Suppose that if $W$ is an $\mathcal S$-fibrant object, then so is $W^{F_\mathcal C(c)}$ for any object $c$ of $\mathcal C$.  Then the localized model structure is compatible with the cartesian closure.
\end{prop}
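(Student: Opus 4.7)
The plan is to verify the two cartesian axioms by leveraging the fact that the unlocalized injective structure on $\sPsh(\mathcal C)$ is already cartesian, then controlling how $\mathcal S$-localization interacts with internal homs. The injective structure is cartesian because every object is cofibrant (cofibrations are monomorphisms), the presheaf category is cartesian closed, and monomorphisms are closed under pushout-products. After localization the cofibration part of the axiom is unchanged, so only the weak-equivalence portion requires attention: the pushout-product of a cofibration with an $\overline{\mathcal S}$-equivalence must lie in $\overline{\mathcal S}$. Via the standard pushout-product/pullback-hom adjunction, this is equivalent to showing that for every cofibration $f\colon A \to B$ and every $\mathcal S$-fibrant $W$, the restriction $W^B \to W^A$ is a fibration in the localized structure between $\mathcal S$-fibrant objects.

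The key intermediate claim is: for every $\mathcal S$-fibrant $W$ and every $X \in \sPsh(\mathcal C)$, the exponential $W^X$ is $\mathcal S$-fibrant. By hypothesis this holds when $X = F_\mathcal C(c)$. I would extend this to arbitrary $X$ by a skeletal induction, writing $X$ as a transfinite composition of pushouts along cofibrations of the form $(F_\mathcal C(c) \times \partial \Delta[n]) \cup (\partial F_\mathcal C(c) \times \Delta[n]) \hookrightarrow F_\mathcal C(c) \times \Delta[n]$. Under $W^{(-)}$, pushouts become pullbacks and transfinite compositions become limits of towers of injective fibrations; since $\mathcal S$-fibrancy is characterized via mapping spaces ($\Map(f,W)$ a weak equivalence for each $f \in \mathcal S$), and $\Map(f,-)$ carries such limits of injective fibrant objects to homotopy limits of weak equivalences, $\mathcal S$-fibrancy is preserved at each stage. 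The induction thus reduces to the case of a single generating cofibration, which in turn reduces to the representable case in the hypothesis by observing that simplicial cotensor with $\Delta[n]$ and $\partial\Delta[n]$ preserves $\mathcal S$-fibrancy (classical for simplicial cotensors) and that finite products of $\mathcal S$-fibrant objects are $\mathcal S$-fibrant (since mapping spaces commute with products).

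With the intermediate claim in hand, the reduction from the first paragraph completes the argument: for any cofibration $A \to B$ and $\mathcal S$-fibrant $W$, the map $W^B \to W^A$ is an injective fibration (by the cartesian property of the unlocalized structure, applied to $A \to B$ and $W \to \ast$) between two $\mathcal S$-fibrant objects, hence is an $\mathcal S$-fibration. The main obstacle is the cell induction establishing the intermediate claim, specifically verifying that $\mathcal S$-fibrancy propagates through each cellular attachment; this hinges on having a description of $\mathcal S$-fibrancy that behaves well under limits of injective fibrations, which is afforded by the mapping-space characterization together with the fact that $\Map_{\sPsh(\mathcal C)}(-,W)$ turns cofibrations into Kan fibrations when $W$ is injective fibrant.
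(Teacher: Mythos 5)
The paper offers no proof of this proposition---it is quoted from \cite{rezktheta}---so the comparison is with the argument there, which proceeds through the equivalent condition (3) of the proposition that follows (namely $s\times F_\mathcal C(c)\in\overline{\mathcal S}$ for $s\in\mathcal S$) and shows that $\overline{\mathcal S}$ is closed under $-\times Y$ for arbitrary $Y$ by presenting $Y$ as a homotopy colimit of representables. Your strategy is the adjoint form of that argument: reduce the pushout-product axiom, via the pullback-hom adjunction and the fact that an injective fibration between $\mathcal S$-fibrant objects is a fibration in the localization, to the claim that $W^X$ is $\mathcal S$-fibrant for all $X$, and then propagate the hypothesis from representables to all $X$ by a (co)limit decomposition. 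That outline is sound. The genuine gap is in the decomposition you invoke. For a general small category $\mathcal C$ the expression $\partial F_\mathcal C(c)$ has no meaning: boundaries of representables require a Reedy structure on $\mathcal C$, which is not assumed here, and identifying the resulting cell attachments with the injective cofibrations further requires elegance (true for $\Theta_n$ by \cite{elegant}, but the proposition is stated for arbitrary $\mathcal C$). Moreover, even after discarding the $\partial F_\mathcal C(c)$ factor, the maps $F_\mathcal C(c)\times\partial\Delta[n]\rightarrow F_\mathcal C(c)\times\Delta[n]$ generate the \emph{projective} cofibrations, not the injective ones, so an arbitrary object of $\sPsh(\mathcal C)$ is not a transfinite composition of pushouts of such maps; your induction therefore does not reach all $X$.

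The step can be repaired. Since $\mathcal S$-locality of $W^X$ is invariant under levelwise weak equivalence in $X$ (for $W$ injective fibrant, $W^{(-)}$ carries levelwise weak equivalences between cofibrant objects---hence between all objects---to levelwise weak equivalences, by Ken Brown's lemma and the cartesianness of the unlocalized injective structure), you may first replace $X$ by a projective-cofibrant resolution, which \emph{is} a cell complex for the maps $F_\mathcal C(c)\times\partial\Delta[n]\rightarrow F_\mathcal C(c)\times\Delta[n]$; alternatively, use the bar-construction presentation of $X$ as a homotopy colimit of objects $F_\mathcal C(c)\times\Delta[n]$, which is available in any presheaf category and is what \cite{rezktheta} uses on the colimit side. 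With either decomposition the remainder of your argument goes through: cotensors by simplicial sets, finite products, inverse limits of towers of injective fibrations, and pullbacks along injective fibrations all preserve $\mathcal S$-fibrancy, as one checks by applying $\Map_{\sPsh(\mathcal C)}(f,-)$ for $f\in\mathcal S$ and using that these operations produce homotopy limits of weak equivalences of Kan complexes.
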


We can incorporate cartesian-ness with the notion of presentation.

\begin{definition}
A presentation $(\mathcal C, \mathcal S)$ is a \emph{cartesian presentation} if, for every $\mathcal S$-local object $X$ in $\sPsh(\mathcal C)$ and $s \colon S \rightarrow S'$ in $\mathcal S$, the induced map $Y^s \colon Y^{S'} \rightarrow Y^S$ is a levelwise weak equivalence, where $Y$ is a fibrant replacement of $X$.
\end{definition}

\begin{prop} \cite[2.11]{rezktheta}
Let $(\mathcal C, \mathcal S)$ be a presentation.  Then the following are equivalent.
\begin{enumerate}
\item The presentation $(\mathcal C, \mathcal S)$ is a cartesian presentation.

\item For any $\mathcal S$-fibrant $X$ in $\sPsh(\mathcal C)$ and any $c \in \ob(\mathcal C)$, the object $X^{Fc}$ is $\mathcal S$-local.

\item For any $s \colon S \rightarrow S'$ in $\mathcal S$ and any $c \in \ob(\mathcal C)$, the map $s \times Fc \colon S \times Fc \rightarrow S' \times Fc$ is in $\overline{\mathcal S}$.
\end{enumerate}
\end{prop}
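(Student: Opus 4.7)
My plan is to prove the equivalences by establishing (1)~$\Leftrightarrow$~(3) and (2)~$\Leftrightarrow$~(3) separately, using in each case the simplicial Yoneda lemma together with the defining adjunction $\map(Z \times S, Y) \cong \map(Z, Y^S)$ of the internal hom in $\sPsh(\mathcal C)$.

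For (1)~$\Leftrightarrow$~(3), I would evaluate the internal hom at a fixed object $c \in \ob(\mathcal C)$. Yoneda combined with the internal hom adjunction gives
\[
(Y^S)(c) \;\cong\; \map(F_\mathcal C(c), Y^S) \;\cong\; \map(F_\mathcal C(c) \times S, Y),
\]
naturally in $S$. Consequently the value of $Y^s \colon Y^{S'} \to Y^S$ at $c$ is the map $\map(s \times F_\mathcal C(c), Y)$ induced by $s \times F_\mathcal C(c)$. Thus $Y^s$ is a levelwise weak equivalence for every $\mathcal S$-fibrant $Y$ and every $s \in \mathcal S$ (condition (1)) if and only if $\map(s \times F_\mathcal C(c), Y)$ is a weak equivalence of simplicial sets for all such $Y$, $s$, and $c$; since $Y$ is $\mathcal S$-fibrant, this is exactly the statement that each $s \times F_\mathcal C(c)$ lies in $\overline{\mathcal S}$.

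For (2)~$\Leftrightarrow$~(3), the same adjunction gives
\[
\map(s, X^{F_\mathcal C(c)}) \;\cong\; \map(s \times F_\mathcal C(c), X).
\]
Provided $X^{F_\mathcal C(c)}$ is itself injective fibrant, the property of being $\mathcal S$-local is equivalent to $\map(s, X^{F_\mathcal C(c)})$ being a weak equivalence for every $s \in \mathcal S$, and the right-hand side is a weak equivalence for the $\mathcal S$-fibrant object $X$ precisely when $s \times F_\mathcal C(c) \in \overline{\mathcal S}$. The required fibrancy of $X^{F_\mathcal C(c)}$ follows from the fact that the unlocalized injective model structure on $\sPsh(\mathcal C)$ is cartesian, so that the internal hom against the cofibrant representable $F_\mathcal C(c)$ preserves injective fibrations.

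The entire argument is essentially bookkeeping with the adjunction and Yoneda; the one nonformal ingredient is the cartesianness of the injective model structure on $\sPsh(\mathcal C)$, needed to guarantee $X^{F_\mathcal C(c)}$ is fibrant in the second step. This is the one place where the identification of $\mathcal S$-locality with a mapping-space condition could fail, and so is the main thing to verify carefully.
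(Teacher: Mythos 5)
Your argument is correct. The paper does not actually prove this proposition---it is quoted verbatim from \cite[2.11]{rezktheta}---and your Yoneda-plus-adjunction bookkeeping, including the one substantive point that cartesianness of the unlocalized injective structure on $\sPsh(\mathcal C)$ is needed to know $X^{F_\mathcal C(c)}$ is injective fibrant (so that $\mathcal S$-locality can be tested by mapping spaces out of the maps in $\mathcal S$), is essentially the argument underlying the cited result.
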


\section{The $\Theta$-construction and comparison functors}

In this section we recall the $\Theta$-construction for obtaining a new category $\Theta \mathcal C$ from a category $\mathcal C$, and we set up a comparison between the categories $\Theta \mathcal C$ and $\Delta \times \mathcal C$.  Some of the material in this section can be found in more detail in \cite[\S 3-4]{rezktheta}.

Let $\mathcal C$ be a small category, and define the category $\Theta \mathcal C$ to have objects $[m](c_1, \ldots ,c_m)$ where $[m]$ is an object of $\Delta$ and each $c_i$ is an object of $\mathcal C$.  A morphism
\[ [m](c_1, \ldots, c_m) \rightarrow [p](d_1, \ldots, d_p)\]
is given by a morphism $\delta \colon [m] \rightarrow [p]$ in $\Delta$ together with a morphism $f_{ij} \colon c_i \rightarrow d_j$ in $\mathcal C$ for every $1 \leq i \leq m$ and $1 \leq j \leq p$ satisfying $\delta(i-1)<j \leq \delta(i)$.   Notice that $\Theta \mathcal C$ has a terminal object $[0]$.

\begin{example}
Take $\Theta_0$ be the category 1 with one object and the identity morphism only.  Inductively define $\Theta_n = \Theta \Theta_{n-1}$.  The categories $\Theta_n$ were first defined by Joyal; the inductive definition described here is due to Berger \cite{berger} and also given in \cite{rezktheta}.  Observe that $\Theta_1 = \Delta$.

Berger proved also that each $\Theta_n$ is a Reedy category, so the category $\sPsh(\Theta_n)$ can be equipped with the Reedy model structure, in which the weak equivalences are given by levelwise weak equivalences of simplicial structure \cite{berger2}.  In \cite{elegant}, we prove that $\Theta_n$ has the additional structure of an elegant Reedy category, so that the Reedy model structure on $\sPsh(\Theta_n)$  is exactly the injective model structure.
\end{example}

\setcounter{subsection}{1}

\subsection{Comparison functors involving $\Delta \times \mathcal C$ and $\Theta \mathcal C$}

Let $\mathcal C$ be a small category with terminal object $t=t_\mathcal C$, and let $p \colon \mathcal C \rightarrow 1$ be the unique functor.  We consider the categories $\Delta \times \mathcal C$ and $\Theta C$.  Both these categories have canonical functors to $\Delta$; in fact, we have adjoint pairs
\[ \pi_\Delta \colon \Delta \times \mathcal C \rightleftarrows \Delta \noloc \tau_\Delta \qquad \text{and} \qquad \pi_\Theta \colon \Theta \mathcal C \rightleftarrows \Theta 1 = \Delta \noloc \tau_\Theta \]
defined by $\pi_\Delta=\id_\Delta \times p$,
$\tau_\Delta=\id_\Delta\times t$, $\pi_\Theta=\Theta p$, and
$\tau_\Theta = \Theta t$.  Explicitly on objects, we have
\begin{align*}
\pi_\Delta(([m],c)) &= [m]  & \pi_\Theta([m](c_1,\dots,c_m)) &= [m] \\
\tau_\Delta([m]) &= ([m],t)  & \tau_\Theta([m]) &= [m](t,\dots,t).
\end{align*}

On the level of simplicial presheaves, these functors allow us to define an \emph{underlying simplicial space} associated to an object of $\sPsh(\Theta \mathcal C)$ or $\sPsh(\Delta \times \mathcal C)$; further evaluating this underlying simplicial space at degree zero gives an \emph{underlying space}.  

For example, consider a functor $X \colon \Deltaop \times \mathcal C^{op} \rightarrow \SSets$.  Then $\tau_\Delta^*(X)$ is the simplicial space given by $[m] \mapsto X([m], t)$, where $t$ is the terminal object of $\mathcal C$.  In other words, $\tau_\Delta^*$ is the simplicial space given by evaluation of $X$ at the terminal object of $\mathcal C$, also written as $X(-, t)$.  The underlying simplicial set of $X$ is given by $X([0], t)$.  

Let $d \colon \Delta \times \mathcal C \rightarrow \Theta \mathcal C$ be the functor defined on objects by
\[ d([m],c) := [m](c,\dots,c). \]
Note that $\pi_\Theta d = \pi_\Delta$ and $d \tau_\Delta = \tau_\Theta$.

The functor $d$ induces the sequence of adjoint functors on presheaves
\[\xymatrix{ {\sPsh(\Delta \times \mathcal C)}  \ar@<-5ex>[d]^{d_\#} \ar@<5ex>[d]^{d_*} \\
{\sPsh(\Theta \mathcal C).} \ar[u]_{d^*} }\]
These adjoint pairs define Quillen pairs
\[
d_\# \colon \sPsh(\Delta \times \mathcal C) \rightleftarrows \sPsh(\Theta \mathcal C) \noloc d^*\]
and
\[ d^*\colon \sPsh(\Theta \mathcal C) \rightleftarrows \sPsh(\Delta \times \mathcal C) \noloc d_* \]
on the projective and injective model structures, respectively.  While our primary interest is in the second Quillen pair, the first will be useful for making calculations.

\setcounter{theorem}{2}

\begin{prop}
There are natural isomorphisms $\pi_\Theta^* d^* \cong \pi_\Delta^*$ of functors $\sPsh(\Delta \times \mathcal C)\rightarrow \sPsh(\Delta)$, and natural
isomorphisms $\pi_\Delta^* d_\# \cong \pi_\Theta^*$ of functors $\sPsh(\Theta \mathcal C) \rightarrow \sPsh(\Delta)$.
\end{prop}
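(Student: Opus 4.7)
The plan is to derive both isomorphisms as formal consequences of two functorial identities already noted just above the statement, namely $\pi_\Theta d = \pi_\Delta$ and $d\tau_\Delta = \tau_\Theta$, combined with the standard pseudo-functoriality of restriction $(-)^*$ and left Kan extension $(-)_\#$ with respect to composition of functors of small categories.

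For the first isomorphism, I would simply apply the contravariant pseudo-functoriality $(gf)^* \cong f^* g^*$ to the equality $\pi_\Theta d = \pi_\Delta$. This yields $d^* \pi_\Theta^* \cong (\pi_\Theta d)^* = \pi_\Delta^*$, which is the asserted natural isomorphism (after tracking the source and target categories of presheaves).

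For the second isomorphism, I would combine pseudo-functoriality of $(-)_\#$ with the ``adjoint trick'' recorded in Section~\ref{presheaf}: because $\pi_\Delta \dashv \tau_\Delta$ and $\pi_\Theta \dashv \tau_\Theta$ are adjoint pairs, we have natural isomorphisms $\pi_\Delta^* \cong \tau_{\Delta \#}$ and $\pi_\Theta^* \cong \tau_{\Theta \#}$. Applying $(-)_\#$ to $d \tau_\Delta = \tau_\Theta$ gives $d_\# \tau_{\Delta \#} \cong (d \tau_\Delta)_\# = \tau_{\Theta \#}$. Chaining these isomorphisms yields
\[ d_\# \pi_\Delta^* \cong d_\# \tau_{\Delta \#} \cong \tau_{\Theta \#} \cong \pi_\Theta^*, \]
which is the asserted statement.

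There is no real obstacle here: the argument is pure bookkeeping with restriction and left Kan extension, and requires no input beyond the two functorial identities and the adjoint-pair identifications already in place. The only thing one must check is that the invoked pseudo-functoriality isomorphisms are canonical and natural in the presheaf argument, which is standard.
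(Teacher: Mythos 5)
Your proof is correct and follows essentially the same route as the paper: the first isomorphism is read off from $\pi_\Theta d = \pi_\Delta$, and the second from $d\tau_\Delta = \tau_\Theta$ together with the identifications $\pi_\Delta^* \cong (\tau_\Delta)_\#$ and $\pi_\Theta^* \cong (\tau_\Theta)_\#$ recorded in Section 2.1. You merely make explicit the (pseudo-)functoriality of $(-)^*$ and $(-)_\#$ that the paper leaves implicit.
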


\begin{proof}
The first isomorphism follows immediately from the fact that $\pi_\Theta d =\pi_\Delta$.  The second follows from the fact that $d\tau_\Delta=\tau_\Theta$ and the observation that $\pi_\Delta^*\cong (\tau_\Delta)_\#$ and $\pi_\Theta^*\cong (\tau_\Theta)_\#$ from Section \ref{presheaf}.
\end{proof}

Explicitly, for any object $W$ of $\sPsh(\Delta \times \mathcal C)$, we have
\[ (d_\# W)([m](t,\dots,t)) \cong W([m],t), \]
and for any object $X$ of $\sPsh(\Theta \mathcal C)$, we have
\[ (d^*X)([m],t) \cong X([m](t,\dots,t)).\]
That is, the functors $d_\#$ and $d^*$ both preserve underlying simplicial spaces, up to isomorphism.

In particular, $(d_\# W)([0]) \cong W([0],t)$ and $(d^*X)([0],t) \cong X([0])$.  Since right adjoints preserve terminal objects, we know that $d^*(F_{\Theta \mathcal C} [0]) \cong F_\Delta[0]\times F_\mathcal C(t)$, from which we see that $(d^*W)([0]) \cong W([0],t)$.  That is, the functors $d_\#$, $d^*$, and $d_*$ all preserve underlying space, up to isomorphism.

\setcounter{subsection}{3}

\subsection{The intertwining functor}

Recall from \cite[4.4]{rezktheta} the \emph{intertwining functor}
\[ V\colon \Theta(\sPsh(\mathcal C)) \rightarrow \sPsh(\Theta \mathcal C), \]
defined on objects by
\begin{align*}
V[m](X_1,\dots,X_m)([k](c_1,\dots,c_k)) &
:= \Map_{\Theta(\sPsh(\mathcal C))}([k](Fc_1,\dots,Fc_k), [m](X_1,\dots,X_m)) \\
& \cong \coprod_{\delta \colon [k] \rightarrow [m]} \; \prod_{i=1}^k \; \prod_{j=\delta(i-1)+1}^{\delta(i)} X_j(c_i).
\end{align*}
The intertwining functor is the left Kan extension of
$F_{\Theta \mathcal C}$ along $\Theta F_\mathcal C$.

More generally, if $f \colon K \rightarrow F[m]$ is a map in $\SSets$, regarded as a maps of discrete objects in $\sPsh(\Theta \mathcal C)$, we define
\[ V_f [m](X_1,\dots,X_m) := \lim \left[ V[m](X_1,\dots,X_m) \rightarrow F[m] \xleftarrow{f} K\right],  \]
so that
\begin{align*}
V_f[m](X_1,\dots,X_m)([k](c_1,\dots,c_k)) & \cong \coprod_{\delta \colon F[k] \rightarrow K} \; \prod_{i=1}^k \; \prod_{j=f\delta(i-1)+1}^{f\delta(i)} X_j(c_i).
\end{align*}
If $K\subseteq F[m]$ is a subobject, we often write $V_K[m](X_1,\dots,X_m)$ for $V_f [m](X_1,\dots,X_m)$.

We recall that
\begin{multline*}
V_f[m](X_1,\dots,X_{d-1},\varnothing,X_{d+1},\dots,X_m)  \\
\cong V_{f \times_{F[m]} F[0, d-1]}[m](X_1,\dots,X_{d-1}) \amalg V_{f \times_{F[m]} F[d,m]}[m](X_{d+1},\dots X_m),
\end{multline*}
and that
\[ V_f[m](X_1,\dots,X_{d-1},{-},X_{d+1},\dots,X_m) \colon \sPsh(\mathcal C) \rightarrow V_f[m](X_1,\dots,X_{d-1},\varnothing,X_{d+1},\dots, X_m) \backslash \sPsh(\Theta \mathcal C) \]
is a left adjoint (and thus is colimit preserving), where the category on the right is the category of objects of $\sPsh(\Theta \mathcal C)$ under $V_f[m](X_1,\dots,X_{d-1},\varnothing,X_{d+1},\dots, X_m)$.  In particular, the functor
\[ V[1] \colon \sPsh(\mathcal C) \rightarrow (F[0]\amalg F[0]) \backslash \sPsh(\Theta \mathcal C) \]
is a left adjoint.  Its right adjoint $(\partial F[1] \xrightarrow {(x_0,x_1)} X) \mapsto M^\Theta_X(x_0,x_1)$ is
described below.    A short calculation gives the following useful fact.

\setcounter{theorem}{4}

\begin{prop}
For any object $c$ of $\mathcal C$, there is an isomorphism $V[1](F_\mathcal C(c)) \cong F_\Theta([1](c))$ in $\sPsh(\Theta \mathcal C)$.
\end{prop}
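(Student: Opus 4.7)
The plan is to compute both sides explicitly on a general test object $[k](c_1,\dots,c_k)$ of $\Theta\mathcal C$ and to exhibit a bijection between them that is natural.

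First I would substitute $m=1$ and $X_1 = F_\mathcal C(c)$ into the explicit formula for $V$ given just above the statement. This yields
\[ V[1](F_\mathcal C(c))([k](c_1,\dots,c_k)) \cong \coprod_{\delta \colon [k] \rightarrow [1]} \; \prod_{i=1}^k \; \prod_{j=\delta(i-1)+1}^{\delta(i)} \Hom_\mathcal C(c_i, c). \]
Since $\delta$ lands in $[1]=\{0,1\}$, for each $i$ the inner $j$-product is either empty (a singleton) when $\delta(i-1)=\delta(i)$, or is the single factor $\Hom_\mathcal C(c_i,c)$ when $\delta(i-1)=0$ and $\delta(i)=1$. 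So the right-hand side simplifies to
\[ \coprod_{\delta \colon [k] \rightarrow [1]} \; \prod_{\{i \,:\, \delta(i-1)<\delta(i)\}} \Hom_\mathcal C(c_i, c). \]

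Second, I would unwind the right-hand side $F_\Theta([1](c))([k](c_1,\dots,c_k)) = \Hom_{\Theta\mathcal C}([k](c_1,\dots,c_k), [1](c))$ directly from the definition of $\Theta\mathcal C$ given in Section 3. A morphism in $\Theta\mathcal C$ is the data of a map $\delta\colon[k]\to[1]$ together with morphisms $f_{ij}\colon c_i\to c$ indexed by pairs satisfying $\delta(i-1)<j\leq\delta(i)$. Since the target is $[1](c)$, necessarily $j=1$, and the constraint forces $\delta(i-1)=0$ and $\delta(i)=1$. Thus the morphism set is exactly the same coproduct/product as above, and the identification is manifestly natural in $[k](c_1,\dots,c_k)$.

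As a sanity check (and alternative proof), one may instead invoke the characterization of $V$ as the left Kan extension of $F_{\Theta\mathcal C}$ along $\Theta F_\mathcal C$, recalled just before the statement. This gives at once $V[m](F_\mathcal C(c_1),\dots,F_\mathcal C(c_m)) \cong F_{\Theta\mathcal C}([m](c_1,\dots,c_m))$ for any list of objects, and the proposition is the case $m=1$. There is no genuine obstacle here; the only point needing a little care is the bookkeeping of empty products when $\delta$ is constant, which is what makes the coproduct on the $V$ side reproduce the $\delta$-summation implicit in the definition of $\Hom_{\Theta\mathcal C}$.
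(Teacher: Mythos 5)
Your computation is correct and is precisely the ``short calculation'' the paper alludes to without writing out: substituting $m=1$ and $X_1=F_{\mathcal C}(c)$ into the explicit formula for $V$, handling the empty products for constant $\delta$, and matching the result against the definition of $\Hom_{\Theta\mathcal C}(-,[1](c))$. Both your direct verification and the alternative via the left Kan extension characterization are sound, so nothing further is needed.
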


We also have that if $f \colon K \rightarrow F[m]$ is a colimit of a diagram $J \rightarrow \SSets / F[m]$ which sends an object $j$ of $J$ to a map $f_j \colon K_j \rightarrow F[m]$, we have
\[ V_f[m](X_1,\dots,X_m) \cong \colim_{j\in J} V_{f_j}[m](X_1,\dots,X_m), \]
the latter colimit taking place in $\sPsh(\Theta \mathcal C)$.

\setcounter{subsection}{5}

\subsection{Relation between the comparison and intertwining functors} \label{intertwiningsec}

The results in this section are quite technical, but will be useful in what follows.

It is important to understand $d^*(V_K[m](X_1,\dots,X_m))$, where
$K \subseteq F[m]$.  We have
\begin{align*}
d^*(V_K[m](X_1,\dots,X_m)) ([k],c) & \cong \prod_{\delta \colon F[k] \rightarrow K} \; \prod_{i=1}^k \; \prod_{j=\delta(i-1)+1}^{\delta(i)} X_j(c) \\
& \cong \prod_{\delta \colon F[k] \rightarrow K} \; \prod_{j=\delta(0)+1}^{\delta(k)} X_j(c).
\end{align*}
In particular,
\[ d^*(V[0]) \cong F[0]=1, \]
and
\[ \begin{aligned}
d^*(V[1](X)) & \cong \colim \left(\coprod_{\delta \colon [k] \rightarrow [1]} X(c) \leftarrow X(c) \amalg X(c) \rightarrow F[0] \amalg F[0] \right) \\
& \cong F[1] \times X \cup_{\partial F[1] \times X} \partial F[1],
\end{aligned} \]
where $F[0]\amalg F[0] \cong \partial F[1]\subseteq F[1]$ is the usual subobject.
We write $\Sigma \colon \sPsh(\mathcal C) \rightarrow \sPsh(\Delta\times \mathcal C)$ for the functor $\Sigma(X):=d^*(V[1](X))$; it comes with a distinguished map
$\partial F[1] \rightarrow \Sigma(X)$.

\setcounter{theorem}{6}

\begin{prop} \label{prop:d-lowerhash-of-suspension}
There is a natural isomorphism $d_\#(\Sigma X) \cong V[1](X)$, compatible with the natural isomorphism $d_\#(\partial F[1]) \cong
V[0] \amalg V[0]$.
\end{prop}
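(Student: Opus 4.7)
The plan is to construct a natural comparison map $\mu_X \colon d_\#(\Sigma X) \to V[1](X)$ and verify that it is an isomorphism, with the compatibility data built in. Since $\Sigma X = d^*(V[1](X))$ by definition, I take $\mu_X$ to be the component at $V[1](X)$ of the counit of the adjunction $d_\# \dashv d^*$, equivalently the adjunct of $\id_{\Sigma X}$. The asserted compatibility with $d_\#(\partial F[1]) \cong V[0] \amalg V[0]$ is then automatic from the naturality of the counit along the canonical map $V[0] \amalg V[0] \to V[1](X)$ coming from the undercategory structure of the intertwining functor.

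First I would verify $\mu_X$ is an isomorphism when $X = F_\mathcal{C}(c)$ is representable. The identities $F[1] \times F_\mathcal{C}(c) \cong F_{\Delta \times \mathcal{C}}([1], c)$ and $\partial F[1] \times F_\mathcal{C}(c) \cong F_{\Delta \times \mathcal{C}}([0], c) \amalg F_{\Delta \times \mathcal{C}}([0], c)$ recast the pushout presentation of $\Sigma F_\mathcal{C}(c)$ from the preceding computation as
\[ \Sigma F_\mathcal{C}(c) \cong F_{\Delta \times \mathcal{C}}([1], c) \cup_{F_{\Delta \times \mathcal{C}}([0], c) \amalg F_{\Delta \times \mathcal{C}}([0], c)} \partial F[1]. \]
Applying the colimit-preserving functor $d_\#$ and using $d_\# F_{\Delta \times \mathcal{C}}([m], c) \cong F_\Theta([m](c, \dots, c))$ converts this to
\[ d_\#(\Sigma F_\mathcal{C}(c)) \cong F_\Theta([1](c)) \cup_{V[0] \amalg V[0]} (V[0] \amalg V[0]), \]
where the right-hand leg is the identity on $V[0] \amalg V[0]$ because $d([0], c) = [0] = d([0], t)$. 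The pushout therefore collapses to $F_\Theta([1](c))$, which is $V[1](F_\mathcal{C}(c))$ by the previous proposition, and $\mu_{F_\mathcal{C}(c)}$ realizes this identification.

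For general $X$ I would argue that both $d_\# \circ \Sigma$ and $V[1]$ are colimit-preserving as functors $\sPsh(\mathcal{C}) \to (V[0] \amalg V[0])/\sPsh(\Theta \mathcal{C})$. For $V[1]$ this is exactly the left-adjointness recorded just before the previous proposition. For $d_\# \circ \Sigma$, the functor $\Sigma \colon \sPsh(\mathcal{C}) \to \partial F[1]/\sPsh(\Delta \times \mathcal{C})$ is itself left adjoint to the evident mapping-object construction, and $d_\#$ restricts to a left adjoint between the corresponding undercategories using $d^*(V[0] \amalg V[0]) \cong \partial F[1]$. Since every object of $\sPsh(\mathcal{C})$ is a colimit of representables and $\mu$ is natural, the isomorphism on representables extends to all $X$.

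The main obstacle is the undercategory bookkeeping needed to justify the colimit-preservation of $d_\# \circ \Sigma$, in particular producing the right adjoint to $\Sigma$ cleanly; the representable pushout computation is essentially mechanical once the identifications $d_\# F_{\Delta \times \mathcal{C}}([m], c) \cong F_\Theta([m](c, \dots, c))$ and $d^*(V[0] \amalg V[0]) \cong \partial F[1]$ are in place. A concrete alternative, should the undercategory formalism prove awkward, is to compute $d_\#(\Sigma X)(\theta)$ directly from the coend definition of $d_\#$ and match it term by term with the explicit formula recorded for $V[1](X)(\theta)$, though the adjunction-plus-density route is conceptually cleaner.
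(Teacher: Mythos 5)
Your proposal follows essentially the same route as the paper: define the comparison map as the adjunct of the isomorphism $\Sigma \cong d^*V[1]$, check it is an isomorphism on representables via the pushout presentation of $\Sigma(F_\mathcal{C}(c))$ and the identification $d_\#(\Sigma F_\mathcal{C}(c)) \cong F_\Theta([1](c)) \cong V[1](F_\mathcal{C}(c))$, and conclude by colimit-preservation of both functors valued in the undercategory. The one caveat is that the discrete representables $F_\mathcal{C}(c)$ generate $\sPsh(\mathcal C)$ only under $\SSets$-weighted (simplicially enriched) colimits rather than ordinary ones, so the density step should be phrased in terms of weighted colimits as the paper does; both functors do preserve these, so the argument goes through.
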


\begin{proof}
The functors
\[ V[1], d_\# \Sigma \colon \sPsh(\mathcal C) \rightarrow (F[0] \amalg F[0]) \backslash \sPsh(\Theta \mathcal C) \]
preserve $\SSets$-weighted colimits.  The isomorphism $\Sigma \xrightarrow{\cong} d^*V[1]$ is adjoint to a map
$\alpha\colon d_\# \Sigma \rightarrow V[1]$.  Thus, it suffices to check that $\alpha$ is an isomorphism at representable objects, which holds since there are isomorphisms
\[ \begin{aligned}
d_\# \Sigma(Fc) & \cong d_\# (F[1] \times Fc \cup_{\partial F[1]\times Fc} \partial F[1]) \\
& \cong F[1](c) \cup_{F[0]\amalg F[0]}(F[0] \amalg F[0]) \cong F[1](c). 
\end{aligned} \]
\end{proof}

The following result gives us an inductive description of $d^*(V_f[m](X_1,\dots,X_m))$, as a quotient of $K\times (X_1\times
\cdots\times X_m)$.

\begin{prop} \label{prop:inductive-d-upperstar-v}
Fix a map $f \colon K \rightarrow F[m]$ of simplicial sets.  Let
\[ T_m=F[0,m-1]\cup_{F[1,m-1]} F[1,m], \]
which comes with an inclusion $T_m \rightarrow F[m]$; the object $T_m$ is the union of the first and last faces of the $m$-simplex along their common intersection.  Then for $m \geq 1$ there is a
pushout diagram in $\sPsh(\Delta\times \mathcal C)$ of the form
\[\xymatrix{ {(T_m\times_{F[m]}K)\times (X_1\times \cdots \times X_m)} \ar[r] \ar[d] & {K\times (X_1\times \cdots\times X_m)} \ar[d] \\
{d^*(V_{T_m\times_{F[m]} K}[m](X_1,\dots,X_m))} \ar[r] & {d^*(V_K[m](X_1,\dots,X_m)).} }\]
\end{prop}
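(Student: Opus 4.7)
The plan is to verify the claimed pushout square pointwise in $\sPsh(\Delta \times \mathcal C)$, using the explicit formula
\[ d^*(V_K[m](X_1,\dots,X_m))([k],c) \cong \coprod_{\delta\colon F[k]\to K} \prod_{j=f\delta(0)+1}^{f\delta(k)} X_j(c) \]
established just before the statement. The key combinatorial observation is that a simplicial map $\alpha\colon F[k] \to F[m]$ factors through $T_m = F[0,m-1] \cup_{F[1,m-1]} F[1,m]$ if and only if $\alpha(0) \geq 1$ or $\alpha(k) \leq m-1$; equivalently, $\alpha$ \emph{fails} to factor through $T_m$ precisely when $(\alpha(0),\alpha(k)) = (0,m)$. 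Pulling back along $f$, a map $\delta\colon F[k] \to K$ lifts to $T_m \times_{F[m]} K$ exactly when $(f\delta(0), f\delta(k)) \neq (0,m)$.

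With this in hand, I would evaluate each of the four corners of the square at $([k], c)$. The top row becomes an inclusion of coproducts indexed by those $\delta\colon F[k] \to K$ factoring through $T_m \times_{F[m]} K$ on the left versus all $\delta$ on the right, each summand being the full product $\prod_{j=1}^m X_j(c)$. The left vertical map is, summand-by-summand, the canonical projection $\prod_{j=1}^m X_j(c) \twoheadrightarrow \prod_{j=f\delta(0)+1}^{f\delta(k)} X_j(c)$ discarding the outer factors. Computing the levelwise pushout then splits into two cases. For $\delta$ that does factor through $T_m \times_{F[m]} K$, the full product coming from the top-right is identified, via the projection, with the partial product $\prod_{j=f\delta(0)+1}^{f\delta(k)} X_j(c)$ coming from the bottom-left. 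For $\delta$ that does not factor, one has $f\delta(0) = 0$ and $f\delta(k) = m$, so the preserved summand $\prod_{j=1}^m X_j(c)$ is already equal to $\prod_{j=f\delta(0)+1}^{f\delta(k)} X_j(c)$. Reassembling, the pushout at level $([k],c)$ is
\[ \coprod_{\delta\colon F[k]\to K} \prod_{j=f\delta(0)+1}^{f\delta(k)} X_j(c), \]
which is precisely $d^*(V_K[m](X_1,\dots,X_m))([k],c)$.

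To finish I would verify that these pointwise identifications are natural in $([k],c)$, so that the levelwise pushouts glue to a pushout square in $\sPsh(\Delta\times \mathcal C)$. The main obstacle is purely bookkeeping: one must pin down exactly which structure maps the square is built from (the top horizontal induced by the monomorphism $T_m \times_{F[m]} K \hookrightarrow K$; the left vertical as the canonical map $K \times X_1 \times \cdots \times X_m \to d^*(V_K[m](X_1,\dots,X_m))$ that summand-wise forgets the $X_j$-factors outside the interval $(f\delta(0), f\delta(k)]$), and to confirm the square commutes with these choices. Once the structure maps are correctly identified, the pushout computation reduces to the set-theoretic split above.
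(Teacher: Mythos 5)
Your proposal is correct and follows essentially the same route as the paper: evaluate all four corners at $([k],c)$ using the explicit coproduct-of-products formula, observe that $\delta$ factors through $T_m$ exactly when $(\delta(0),\delta(k))\neq(0,m)$, and check the pushout summand-by-summand. The only cosmetic difference is that the paper reduces to the case $f=\id_{F[m]}$ while you carry the general $f$ through directly.
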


\begin{proof}
Assume for simplicity that $f=\id_{F[m]}$.  Note that maps $\delta \colon F[k] \rightarrow T_m$ are precisely the maps $\delta \colon [k] \rightarrow [m]$ such that either
$\delta(0)>0$ or $\delta(k)<m$.  If we
evaluate the above square at each corner at $([k],c)$, we get
\[\xymatrix{ {\coprod_{\delta \colon F[k] \rightarrow T_m} \; \prod_{j=1}^m X_j(c)} \ar[r] \ar[d]
& {\coprod_{\delta \colon F[k] \rightarrow F[m]} \; \prod_{j=1}^m X_j(c)} \ar[d] \\
{\coprod_{\delta \colon F[k] \rightarrow T_m} \; \prod_{j=\delta(0)+1}^{\delta(k)} X_j(c)} \ar[r] &
{\coprod_{\delta \colon F[k] \rightarrow F[m]}\; \prod_{j=\delta(0)+1}^{\delta(k)} X_j(c)} }\]
which is a pushout.
\end{proof}

Recall that because 
\[ T_m=F[0,m-1]\cup_{F[1,m-1]}F[1,m], \] 
we have that 
\[ V_{T_m} \cong V_{F[0,m-1]}\cup_{V_{F[1,m-1]}} V_{F[1,m]}, \] 
and $d^*$ preserves colimits.  Thus the above proposition gives an inductive description of $d^*V_f$.

We note that since $T_m \rightarrow F[m]$ is a monomorphism, it follows that the above square is actually a homotopy pushout with respect to levelwise weak equivalences in $\sPsh(\Delta \times \mathcal C)$.  Furthermore, since 
\[ T_m=F[0,m-1]\cup_{F[1,m-1]}F[1,m] \] 
presents $T_m$ as a pushout along inclusions, the resulting pushout squares building 
\[ (T_m\times_{F[m]}f)\times (X_1\times\cdots \times X_m) \] 
and
\[ d^*(V_{T_m \times_{F[m]} f}[m](X_1,\dots,X_m)) \] 
are also homotopy pushouts.

\setcounter{subsection}{8}

\subsection{Mapping objects} \label{mapping}

Given $X$ in $\sPsh(\Theta \mathcal C)$ and a vertex $(x_0,\dots,x_m) \in X[0]^{m+1}$, we would like to define the mapping object of $X$ associated to the vertices $x_0, \ldots, x_m$.  When $m=1$, this object gives the presheaf of functions from $x_0$ to $x_1$, but we give a more general definition.  Let $M_X^\Theta(x_0,\dots,x_m)$ denote the object of $\sPsh(\mathcal C^m)$ defined by
\[ M_X^\Theta(x_0,\dots x_m)(c_1,\dots,c_m) := \lim \left(X[m](c_1,\dots,c_m) \rightarrow X[0]^{m+1} \leftarrow \{(x_0,\dots,x_m)\} \right). \]
Equivalently, $M^\Theta_X(x_0,\dots,x_m)(c_1,\dots,c_m)$ is the fiber of the map
\[ \Map_{\sPsh(\Theta \mathcal C)}(F[m](c_1,\dots,c_m) , X) \rightarrow \Map_{\sPsh(\Theta \mathcal C)}(\amalg_{m+1}F[0], X) \] 
over $(x_0,\dots,x_m)$.

Likewise, given $W$ in $\sPsh(\Delta \times \mathcal C)$ and $(x_0,\dots,x_m) \in W([0],t)^{m+1}$,
let $M_W^\Delta(x_0,\dots,x_m)$ denote the object of $\sPsh(\mathcal C)$ defined by
\[ M_W^\Delta(x_0,\dots,x_m)(c) := \lim \left( W([m],c) \rightarrow  W([0],c)^{m+1} \leftarrow \{(x_0,\dots,x_m)\} \right) \]
where the second arrow uses the map $W([0],t) \rightarrow W([0],c)$.

In particular, we note that  $M_W^\Delta(x_0,x_1)(c)$ is the fiber of the map
\[ \Map_{\sPsh(\Delta\times \mathcal C)}(\Sigma(F_\mathcal C c), W) \rightarrow \Map_{\sPsh(\Delta \times \mathcal C)} (F[0] \amalg F[0], W). \]

The following fact can be verified from the definitions of these mapping objects.

\setcounter{theorem}{9}

\begin{prop}
The functors $d^*$ and $d_*$ are compatible with $M^\Delta$ and $M^\Theta$.  That is, if $W$ is an object of $\sPsh(\Delta\times \mathcal C)$ and $X$ is an object of $\sPsh(\Theta \times \mathcal C)$, then
\[ M^\Theta_{d_*W}(x_0,x_1) \cong M^\Delta_W(x_0,x_1) \quad \text{and} \quad M^\Delta_{d^*X}(y_0,y_1) \cong M^\Theta_X(y_0,y_1) \]
for all $x_0,x_1\in W([0],t) \cong (d_*W)([0])$ and $y_0,y_1 \in X([0]) \cong (d^*X)([0],t)$.
\end{prop}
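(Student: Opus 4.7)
The plan is to deduce both isomorphisms formally from the two adjunctions $d_\# \dashv d^* \dashv d_*$ applied to the defining fiber descriptions of $M^\Theta$ and $M^\Delta$, using Proposition~\ref{prop:d-lowerhash-of-suspension} and the Section~3.5 computation of $d^*V[1]$ as the inputs that match up the sources of the relevant mapping spaces.

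For the first isomorphism, I would write $M^\Theta_{d_*W}(x_0,x_1)(c)$ as the fiber over $(x_0,x_1)$ of
\[ \Map_{\sPsh(\Theta\mathcal C)}(F[1](c), d_*W) \to \Map_{\sPsh(\Theta\mathcal C)}(F[0]\amalg F[0], d_*W), \]
apply the adjunction $d^* \dashv d_*$, and then use that $F[1](c) \cong V[1](F_\mathcal C c)$ (Proposition~3.5) together with the formula $d^*V[1](X) \cong \Sigma X$ of Section~3.5 to identify $d^*F[1](c) \cong \Sigma(F_\mathcal C c)$. Since $d^*$ preserves coproducts and the terminal object, also $d^*(F[0]\amalg F[0]) \cong F[0]\amalg F[0]$. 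The resulting fiber is precisely $M^\Delta_W(x_0,x_1)(c)$ by definition.

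For the second isomorphism, I would dually apply the adjunction $d_\# \dashv d^*$ to the defining fiber for $M^\Delta_{d^*X}(y_0,y_1)(c)$; Proposition~\ref{prop:d-lowerhash-of-suspension} identifies $d_\#\Sigma(F_\mathcal C c) \cong V[1](F_\mathcal C c) \cong F[1](c)$, compatibly with $d_\#(F[0]\amalg F[0]) \cong F[0]\amalg F[0]$, and this produces exactly the fiber defining $M^\Theta_X(y_0,y_1)(c)$.

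The one point of care — and the main obstacle, such as it is — is tracking basepoints: one must verify that the isomorphisms $(d_*W)([0]) \cong W([0],t)$ and $(d^*X)([0],t) \cong X([0])$ used to transport $(x_0,x_1)$ and $(y_0,y_1)$ through the adjunctions commute with the evaluation maps $\Map(F[0]\amalg F[0], -)$ appearing in the fiber definitions. These identifications are precisely the underlying-space compatibilities already recorded in the excerpt, and each is obtained by applying the relevant adjunction to $F[0]$ (together with the formal consequence $d([0],t) = [0]$), so the required commutativity is automatic and the verification reduces to unwinding definitions.
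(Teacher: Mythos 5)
Your argument is correct, and it is essentially the verification the paper intends: the paper offers no written proof beyond ``can be verified from the definitions,'' but it records immediately beforehand exactly the ingredients you use --- the fiber descriptions of $M^\Theta$ and $M^\Delta$ in terms of $F[1](c)\cong V[1](F_\mathcal{C}c)$ and $\Sigma(F_\mathcal{C}c)$, and Proposition~\ref{prop:d-lowerhash-of-suspension} --- so passing these across the adjunctions $d_\#\dashv d^*\dashv d_*$ is the intended route. Your closing remark about basepoints correctly identifies the only point requiring care, and it is handled by the underlying-space identifications $(d_*W)([0])\cong W([0],t)$ and $(d^*X)([0],t)\cong X([0])$ already established in Section~3.
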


\section{$\Theta$-objects}

Let $\mathcal C$ be a small category with terminal object $t$. In this section, we are interested in a certain localization of $\sPsh(\Theta \mathcal C)$ which is described in more detail in \cite{rezktheta}.  The local objects are those satisfying Segal and completeness conditions, together with local conditions inherited from a localization of $\sPsh(\mathcal C)$, given by a set $\mathcal{S}$ of cofibrations in $\sPsh(\mathcal C)$.

Recall that for simplicial spaces, the inclusion maps $\varphi^m \colon G[m] \rightarrow F[m]$ for $m \geq 0$ are used to encode the Segal condition, where
\[ G[m] = \colim\left(F[1] \rightarrow F[0] \leftarrow \cdots \rightarrow F[0] \leftarrow F[1] \right) = F[0,1]\cup\cdots\cup F[m\!-\!1,m]. \] 
Notice that $\varphi^0$ and $\varphi^1$ are isomorphisms.  Additionally, if $E$ denotes the discrete nerve of the groupoid with two objects and a single isomorphism between them, the collapse map $z \colon E \rightarrow F[0]$ is used to encode the completeness condition.

For the more general case of $\sPsh(\Theta \mathcal C)$, the Segal condition is similarly encoded by maps
\[ G[m](c_1, \ldots, c_m) \rightarrow F[m](c_1, \ldots, c_m) \]
for each $m \geq 0$ and $m$-tuple $(c_1, \ldots, c_m)$ of objects of $\mathcal C$, where
\[ G[m](c_1, \ldots, c_m) := \colim \left(F[1](c_1) \rightarrow F[0] \leftarrow \cdots \rightarrow F[0] \leftarrow F[1](c_m)\right). \]
Denote by $\Se^\Theta$ the set of all such maps. Equivalently, we can use the intertwining functor to define $\Se^\Theta$ as the set of all maps of the form
\[ V_{G[m]}[m](c_1, \ldots, c_m)  \rightarrow V[m](c_1, \ldots, c_m). \]

\begin{definition} \label{thetaobject}
Let $(\mathcal C, \mathcal S)$ be a presentation.  An object $X$ of $\sPsh(\Theta \mathcal C)$ is a $\Theta$-\emph{object} relative to $\mathcal{S}$ if:
\begin{enumerate}
\item [(1)] $X$ is injective fibrant,
\item [(2)] for all $m\geq2$ and all tuples $c_1,\dots,c_m$ of objects in $C$, the map
\[ (X\delta^{01}, \dots, X\delta^{m-1,m})\colon X[m](c_1,\dots,c_m) \rightarrow X[1](c_1)\times_{X[0]} \cdots\times_{X[0]} X[1](c_m) \]
is a weak equivalence of simplicial sets,
\item [(3)] the object $\tau_{\Theta}^*X$ of $\sPsh(\Delta)$ is a complete Segal space, and
\item [(4)] for all $x_0,x_1\in X[0]$, the object $M_X^\Theta(x_0,x_1)$ in $\sPsh(\mathcal C)$ is $\mathcal{S}$-local.
\end{enumerate}
\end{definition}

\begin{remark}
We note that, in the presence of (1)  and (4), we can replace condition (2) with the following.
\begin{enumerate}
\item [(2')] For all $m\geq 2$, all tuples $c_1,\dots,c_m$ of objects in $C$, and all $x_0,\dots,x_m\in X[0]$, the map
\[ M_X^\Theta(x_0,\dots,x_m) \rightarrow M_X^\Theta(x_0,x_1)\times \cdots \times M_X^\Theta(x_{m-1},x_m).  \]
is a weak equivalence.
\end{enumerate}
\end{remark}

The following result was shown in \cite[\S 8]{rezktheta}.

\begin{prop}
Consider the injective model structure $\sPsh(\Theta \mathcal C)$, and the following sets of maps.
\begin{enumerate}
\item [(1)] Let $\Se^\Theta=\{ V_{G[m]}[m](c_1,\dots,c_m) \rightarrow V[m](c_1,\dots,c_m) \}$, where $m \geq 0$ and $(c_1,\dots,c_m) \in \ob(\mathcal C)^m$.
\item  [(2)] Let $\Cpt^\Theta = \{ \pi_\Theta^*z \}$, where $z \colon E \rightarrow F[0]$ in $\sPsh(\Delta)$ is the map defining completeness for simplicial spaces.
\item [(3)] Let $\Rec^\Theta(\mathcal {S}) = \{ V[1](f) \}$, where $f$ ranges over all elements of $\mathcal{S}$.
\end{enumerate}
Let $\mathcal{S}^\Theta = \Se^\Theta \cup \Cpt^\Theta \cup \Rec^\Theta(\mathcal{S})$.  Then $X$ is a $\Theta$-object if and only if it is injective fibrant and $\mathcal{S}^\Theta$-local.
\end{prop}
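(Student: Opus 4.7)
The plan is to identify each of conditions (2), (3), (4) in the definition of $\Theta$-object with $\mathcal{S}$-locality for one of the three pieces of $\mathcal{S}^\Theta$. Since both characterizations explicitly require injective fibrancy, it suffices to match the three locality conditions to conditions (2), (3), (4) for an injective fibrant $X$.

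For the Segal piece $\Se^\Theta$: By the left Kan extension description of $V$ (extending Proposition 3.5 from $m=1$), one has $V[m](Fc_1,\dots,Fc_m) \cong F_{\Theta\mathcal C}([m](c_1,\dots,c_m))$, so the right-hand terms satisfy
\[ \Map(V[m](c_1,\dots,c_m), X) \cong X([m](c_1,\dots,c_m)). \]
For the left-hand terms, apply the colimit compatibility of $V_{(-)}[m](X_1,\dots,X_m)$ recorded just before Subsection 3.5 to the pushout decomposition $G[m] = F[1] \cup_{F[0]} \cdots \cup_{F[0]} F[1]$. When $f\colon F[1] \to F[m]$ picks out the $i$-th edge, $V_f[m](Fc_1,\dots,Fc_m)$ collapses to $V[1](Fc_i) \cong F_{\Theta\mathcal C}([1](c_i))$, which exhibits $V_{G[m]}[m](c_1,\dots,c_m)$ as the iterated pushout $F_{\Theta\mathcal C}([1](c_1)) \cup_{F[0]} \cdots \cup_{F[0]} F_{\Theta\mathcal C}([1](c_m))$ along cofibrations between representables. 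Since $X$ is injective fibrant, $\Map(-,X)$ converts this into the Segal product $X[1](c_1) \times_{X[0]} \cdots \times_{X[0]} X[1](c_m)$. Hence $\Se^\Theta$-locality is exactly condition (2).

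For the completeness piece $\Cpt^\Theta$: Because $\pi_\Theta$ is left adjoint to $\tau_\Theta$, the right adjoint of $\pi_\Theta^*$ is $(\pi_\Theta)_* \cong \tau_\Theta^*$, so
\[ \Map(\pi_\Theta^* z, X) \cong \Map(z, \tau_\Theta^* X). \]
Thus $X$ is $\Cpt^\Theta$-local iff $\tau_\Theta^* X$ is $z$-local, i.e., complete in the sense of simplicial spaces. The Segal condition for $\tau_\Theta^* X$ is the case $c_1 = \cdots = c_m = t$ of condition (2), so in the presence of (2), $\Cpt^\Theta$-locality is equivalent to $\tau_\Theta^* X$ being a complete Segal space, which is condition (3).

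For the recognition piece $\Rec^\Theta(\mathcal S)$: The excerpt records that $V[1]$ is left adjoint to the functor sending $(\partial F[1] \xrightarrow{(x_0,x_1)} X)$ to $M_X^\Theta(x_0,x_1)$. For $f \in \mathcal S$, the space $\Map(V[1](f), X)$ splits over $X[0] \times X[0]$ into the maps $\Map(f, M_X^\Theta(x_0,x_1))$, so $\Rec^\Theta(\mathcal S)$-locality of $X$ is precisely that each $M_X^\Theta(x_0,x_1)$ is $\mathcal S$-local, which is condition (4). The main obstacle in the whole argument is the Segal step: one must verify carefully that the colimit description of $V_{G[m]}[m](c_1,\dots,c_m)$ is in fact a homotopy pushout along cofibrations whose vertices are representable objects of $\sPsh(\Theta\mathcal C)$, so that mapping $X$ out of it recovers the iterated fiber product written in condition (2); the remaining two pieces are formal manipulations of adjunctions.
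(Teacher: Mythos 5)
The paper states this proposition without proof, treating it as an unpacking of the definition of a $\Theta$-object, so there is no argument of record to compare yours against; judged on its own, your proof is correct and is surely the intended one. Each of your three identifications is sound: for $\Se^\Theta$ the key points are that $V[m](Fc_1,\dots,Fc_m)\cong F_{\Theta\mathcal C}([m](c_1,\dots,c_m))$ (immediate from the explicit coproduct formula for the intertwining functor together with the definition of morphisms in $\Theta\mathcal C$) and that the colimit compatibility of $V_{(-)}[m]$ applied to $G[m]=F[1]\cup_{F[0]}\cdots\cup_{F[0]}F[1]$ identifies $V_{G[m]}[m](Fc_1,\dots,Fc_m)$ with $F_\Theta[1](c_1)\cup_{F[0]}\cdots\cup_{F[0]}F_\Theta[1](c_m)$, whence $\Map(-,X)$ gives exactly the iterated fiber product of condition (2); the $\Cpt^\Theta$ step is the adjunction isomorphism $(\pi_\Theta)_*\cong\tau_\Theta^*$ recorded in Section 2 (together with the observation that the Segal half of condition (3) is the $c_1=\cdots=c_m=t$ case of condition (2)); and the $\Rec^\Theta(\mathcal S)$ step is the stated adjunction between $V[1]$ and $M_X^\Theta$. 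One small remark: the ``main obstacle'' you flag at the end is less of an obstacle than you suggest, since condition (2) is stated with the strict iterated fiber product, and $\Map(-,X)$ takes the colimit defining $V_{G[m]}[m]$ to the corresponding strict limit for purely formal reasons; homotopy-invariance of that limit (which does hold, the relevant maps being monomorphisms into an injective fibrant target) is only needed if one wants to replace (2) by its homotopy-invariant variant. You could also note explicitly that the $m=0,1$ members of $\Se^\Theta$ impose no condition because $\varphi^0$ and $\varphi^1$ are isomorphisms, and that locality over all vertices $(x_0,x_1)$ of $X[0]\times X[0]$ in the $\Rec^\Theta$ step uses that the projection $\Map(V[1](S),X)\to X[0]\times X[0]$ is a fibration; both are one-line observations.
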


The names of the sets of maps are meant to suggest, respectively, Segal maps, the completeness map, and recursive maps, which capture localizations from $\sPsh(\mathcal C)_\mathcal S$.  The following theorem tells us that these maps define cartesian presentations.

\begin{theorem} \cite[6.1, 7.21, 8.1]{rezktheta}
The presentations $(\Theta \mathcal C, \Se^\Theta)$ and $(\Theta \mathcal C, \Se^\Theta\cup \Cpt^\Theta)$ are cartesian.  Furthermore, if $(\mathcal C, \mathcal{S})$ is cartesian, then so is $(\Theta \mathcal C, \mathcal{S}^\Theta)$.
\end{theorem}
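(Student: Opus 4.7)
The plan is to verify the third equivalent characterization of cartesian presentations from the proposition immediately preceding the statement: for each generating localizing map $s \colon S \rightarrow S'$ and each representable $F_\Theta\theta$ with $\theta = [p](d_1,\dots,d_p)\in \Theta\mathcal C$, the product $s\times F_\Theta\theta$ lies in $\overline{\mathcal{S}^\Theta}$. Since $F_\Theta\theta \cong V[p](F_{\mathcal C}d_1,\dots,F_{\mathcal C}d_p)$, this reduces the problem to analyzing how the intertwining functor interacts with products and with the three kinds of local maps in turn.

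I would first handle $\Se^\Theta$ by analyzing the product $V[m](Fc_1,\dots,Fc_m)\times V[p](Fd_1,\dots,Fd_p)$ combinatorially. Using Proposition \ref{prop:inductive-d-upperstar-v} and the decomposition $T_m = F[0,m-1]\cup_{F[1,m-1]} F[1,m]$, such a product decomposes, via the standard shuffle presentation, as an iterated (homotopy) pushout of intertwined objects of the form $V[q](-)$ where $q<m+p$. Induction on $m+p$, with base case $m=0$ or $p=0$ trivial, reduces the Segal-equivalence statement for $(V_{G[m]}[m](\vec c)\to V[m](\vec c))\times V[p](\vec{Fd})$ to Segal equivalences between shorter intertwined objects already in $\overline{\Se^\Theta}$.

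Next, for $\Cpt^\Theta$, I would exploit the fact that the completeness map $\pi_\Theta^* z$ is pulled back from $\sPsh(\Delta)$, where the cartesian property for the corresponding localization is already known from \cite[9.2]{rezk}. The product $\pi_\Theta^* z \times V[p](Fd_1,\dots,Fd_p)$ can be rewritten using the inductive shuffle decomposition of $V[p]$ as a pushout of simpler pieces along which the $\sPsh(\Delta)$-level cartesian property, promoted through $\pi_\Theta^*$ and the Segal-cartesianness from the previous step, allows one to conclude membership in $\overline{\Se^\Theta\cup\Cpt^\Theta}$. Finally, for $\Rec^\Theta(\mathcal S)$ under the hypothesis that $(\mathcal C,\mathcal S)$ is cartesian: given $f\in \mathcal S$, the hypothesis gives $f\times Fd_i\in \overline{\mathcal S}$, hence $V[1](f\times Fd_i)\in \overline{\Rec^\Theta(\mathcal S)}$; one then presents $V[1](f)\times V[p](Fd_1,\dots,Fd_p)$ as a pushout diagram built from such $V[1](f\times Fd_i)$ terms glued along objects controlled by the Segal-cartesian property already established.

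The main obstacle is the combinatorial shuffle-decomposition step for $\Se^\Theta$: one must carefully present the product $V[m](\vec c)\times V[p](\vec{Fd})$ as an iterated pushout of lower-dimensional $V[q]$ terms, verify that each such pushout is a homotopy pushout with respect to the $\Se^\Theta$-local model structure, and check that the Segal generators are genuinely carried to Segal equivalences along each level. This is the technical heart of the argument, originally carried out in \cite[6.1]{rezktheta}; once it is available, the remaining two cases are comparatively formal consequences of cartesianness at the level of $\sPsh(\Delta)$ and of $\sPsh(\mathcal C)$, respectively, propagated through $\pi_\Theta^*$ and $V[1]$.
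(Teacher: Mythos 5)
First, a caveat: the paper offers no proof of this statement at all --- it is imported verbatim from \cite[6.1, 7.21, 8.1]{rezktheta} --- so your sketch can only be measured against Rezk's original arguments. At that level your skeleton is right: one verifies the third characterization of cartesian presentations by crossing each generator with a representable $F_\Theta\theta$, and for the Segal and recursive generators the engine is a shuffle decomposition of products of intertwined objects. But there are concrete gaps. The decomposition you invoke via Proposition \ref{prop:inductive-d-upperstar-v} is not the relevant one: that proposition describes $d^*(V_K[m](\dots))$ as a pushout in $\sPsh(\Delta\times\mathcal C)$ and says nothing about the product $V[m](Fc_1,\dots,Fc_m)\times V[p](Fd_1,\dots,Fd_p)$ in $\sPsh(\Theta\mathcal C)$. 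What is actually needed is Rezk's presentation of $F_\Theta\theta\times F_\Theta\theta'$ as an $\Se^\Theta$-equivalent colimit, over shuffles, of objects $V[q](X_1,\dots,X_q)$ whose entries $X_k$ are products $Fc_i\times Fd_j$ in $\sPsh(\mathcal C)$ --- generally \emph{not} representable --- so the induction cannot be run only over $V$ of tuples of representables as you set it up; one must first establish that the Segal maps $V_{G[q]}[q](X_1,\dots,X_q)\to V[q](X_1,\dots,X_q)$ lie in $\overline{\Se^\Theta}$ for arbitrary presheaf arguments, using that $V$ is colimit-preserving in each variable. The same issue touches your recursive step: $f\times Fd_j$ lies in $\overline{\mathcal S}$ but typically not in $\mathcal S$, so $V[1](f\times Fd_j)$ is not literally a generator of $\Rec^\Theta(\mathcal S)$, and you need the separate fact that $V[1]$ carries $\overline{\mathcal S}$ into $\overline{\mathcal S^\Theta}$.

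The more serious gap is the completeness case. Crossing $\pi_\Theta^*z$ with a general representable $F_\Theta[p](d_1,\dots,d_p)$ is not controlled by the cartesianness of the completeness localization on $\sPsh(\Delta)$ together with the Segal case already proved: the phrase ``promoted through $\pi_\Theta^*$ \dots allows one to conclude membership'' is precisely where the content lies, and no mechanism is supplied. Rezk's proof of \cite[7.21]{rezktheta} does not proceed by such a product decomposition; it shows directly that for an $(\Se^\Theta\cup\Cpt^\Theta)$-fibrant $W$ the object $W^{F_\Theta\theta}$ is again complete, by developing the theory of homotopy equivalences in Segal $\Theta\mathcal C$-objects and reducing the detection of an equivalence in $W^{F_\Theta\theta}$ to data in $W$ itself. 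Without an argument of that kind, or a genuine substitute, the middle third of your proposal does not go through.
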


The following criterion will be useful in what follows.

\begin{prop}\label{prop:we-local-objects-theta}
Let $f \colon X \rightarrow Y$ be a map between $\mathcal{S}^\Theta$-local objects in $\sPsh(\Theta \mathcal C)$.  Then $f$ is in $\overline{\mathcal{S}^\Theta}$ if and only if the map 
\[ f([0]) \colon X[0] \rightarrow Y[0] \]
and the maps
\[ f[1](c) \colon X[1](c) \rightarrow Y[1](c) \]
for all objects $c$ of $\mathcal C$ are weak equivalences of spaces.
\end{prop}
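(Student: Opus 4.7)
The strategy is to invoke the standard fact that between $\mathcal{S}^\Theta$-fibrant objects, membership in $\overline{\mathcal{S}^\Theta}$ coincides with being a levelwise weak equivalence in the underlying injective model structure on $\sPsh(\Theta\mathcal{C})$; this reduces the problem to a direct argument about values at each object $[m](c_1,\ldots,c_m)$ of $\Theta\mathcal{C}$. The forward direction then follows immediately, since a levelwise weak equivalence restricts in particular to the stated equivalences at $[0]$ and at each $[1](c)$.

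For the converse, assuming $f[0]$ and each $f[1](c)$ is a weak equivalence, I would show that $f[m](c_1,\ldots,c_m)$ is a weak equivalence for every $m\geq 2$ and every tuple $(c_1,\ldots,c_m)$; as all objects of $\Theta\mathcal{C}$ are of this form, this suffices. Since $X$ and $Y$ are $\mathcal{S}^\Theta$-local, the Segal maps in $\Se^\Theta$ produce a commutative square
\[
\xymatrix{
X[m](c_1,\ldots,c_m) \ar[r]^-{\simeq} \ar[d]_{f[m](c_1,\ldots,c_m)} & X[1](c_1)\times_{X[0]} \cdots \times_{X[0]} X[1](c_m) \ar[d] \\
Y[m](c_1,\ldots,c_m) \ar[r]^-{\simeq} & Y[1](c_1)\times_{Y[0]} \cdots \times_{Y[0]} Y[1](c_m)
}
\]
with horizontal weak equivalences; by two-out-of-three it suffices to show that the right-hand vertical map is a weak equivalence.

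To interpret these strict pullbacks as homotopy pullbacks, I use that $X$ and $Y$ are injective fibrant. Applying the simplicial SM7 axiom to the monomorphism $F[0]\amalg F[0]\hookrightarrow F[1](c_i)$ shows that $X[1](c_i)\to X[0]\times X[0]$ is a Kan fibration, and since $X[0]$ is itself fibrant, both projections $X[1](c_i)\to X[0]$ are Kan fibrations — similarly for $Y$. Thus the iterated strict pullbacks compute iterated homotopy pullbacks, and the assumption that $f[0]$ and each $f[1](c_i)$ is a weak equivalence forces the vertical comparison map on the right to be a weak equivalence. The only genuine technical point, and the main potential obstacle, is precisely this fibrancy assertion for the face maps; everything else is a formal consequence of the simplicial injective model structure and the standard behavior of Bousfield localizations on maps between local objects.
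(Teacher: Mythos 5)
Your proposal is correct and follows essentially the same route as the paper's (much terser) proof: reduce to $\mathcal{S}^\Theta$-fibrant objects, where $\overline{\mathcal{S}^\Theta}$ coincides with levelwise weak equivalence, and then use the Segal condition plus injective fibrancy to exhibit $X[m](c_1,\dots,c_m)$ as a homotopy limit of the values at $[1](c_i)$ and $[0]$. The one step you should make explicit is the passage from $\mathcal{S}^\Theta$-local to $\mathcal{S}^\Theta$-fibrant objects (since local objects need not be injective fibrant, which your SM7 argument uses): replace $X$ and $Y$ by fibrant replacements and note that both the hypothesis and the conclusion are invariant under levelwise weak equivalence, exactly as in the paper's closing sentence.
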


\begin{proof}
Recall that a map between $\mathcal{S}^\Theta$-fibrant objects is in $\overline{\mathcal{S}^\Theta}$ if and only if it is a levelwise weak
equivalence.  For $\mathcal{S}^\Theta$-fibrant objects, the value at a general $[m](c_1,\dots,c_m)$ in $\Theta \mathcal C$ is a homotopy limit of values at $[1](c_i)$, for each $1 \leq i \leq m$, and $[0]$, verifying the proposition when $X$ and $Y$ are $\mathcal{S}^\Theta$-fibrant.  Since any $\mathcal{S}^\Theta$-local object is levelwise weakly equivalent to a $\mathcal{S}^\Theta$-fibrant object, the proposition follows.
\end{proof}

\begin{example}
When $\mathcal C = \Theta_{n-1}$, so that $\Theta \mathcal C = \Theta_n$, then $\Theta$-objects are called $\Theta_n$-\emph{spaces}.  Since the categories $\Theta_n$ are meant to encode higher categories, in the sense that $\Delta$ encodes categories, $\Theta_n$-spaces are higher-order versions of complete Segal spaces and hence models for $(\infty, n)$-categories.
\end{example}

\section{Segal objects and complete Segal objects in $\sPsh(\Delta \times \mathcal C)$}

Again, let $\mathcal C$ be a small category with terminal object $t=t_\mathcal C$.  We would like to consider objects in $\sPsh(\Delta \times \mathcal C)$ which are analogous to the $\Theta$-objects in $\sPsh(\Theta \mathcal C)$.

\begin{definition}
Let $(\mathcal C,\mathcal S)$ be a presentation.  An object $W$ of $\sPsh(\Delta \times \mathcal C)$ is a \emph{Segal object relative to} $\mathcal S$ if
\begin{enumerate}
\item \label{injfib} the object $W$ is injective fibrant.
\item \label{segal} for all $m \geq 2$ and $c$ objects of $\mathcal C$, the map
\[ (W \delta^{01},\cdots, W \delta^{m-1,m}) \colon W([m],c) \rightarrow W([1],c)\times_{W([0],c)} \cdots \times_{W([0],c)}  W([1],c) \]
is a weak equivalence of spaces, and

\item \label{recursive} for all $x_0,x_1\in W([0],t)$, the object $M_W^\Delta(x_0,x_1)$ in $\sPsh(\mathcal C)$ is $\mathcal{S}$-local.
\end{enumerate}
\end{definition}

We prove the following theorem in \cite[3.14]{part1} in the case where $\mathcal C= \Theta_n$ with the $\Theta$-object localization.  Here, we consider the more general case.

\begin{theorem} \label{ssthetansp}
There is a localization of the injective model structure on $\sPsh(\Delta \times \mathcal C)$ in which the fibrant objects are precisely the Segal objects.  Furthermore, this model structure is cartesian if $(\mathcal C, \mathcal S)$ is a cartesian presentation.
\end{theorem}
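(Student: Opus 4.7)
The plan is to adapt the $\Theta$-object localization of Section 4 to the presheaf category $\sPsh(\Delta \times \mathcal{C})$.  I would introduce the localizing set $\mathcal{S}^\Delta := \Se^\Delta \cup \Rec^\Delta(\mathcal{S})$, where
\[ \Se^\Delta = \{\varphi^m \times F_\mathcal{C}(c) \colon G[m] \times F_\mathcal{C}(c) \to F[m] \times F_\mathcal{C}(c)\}_{m \geq 0,\, c \in \ob(\mathcal{C})} \]
and $\Rec^\Delta(\mathcal{S}) = \{\Sigma(f) : f \in \mathcal{S}\}$, with $\Sigma$ the suspension of Section 3.5.  The injective model structure on $\sPsh(\Delta \times \mathcal{C})$ is left proper and combinatorial, so the Bousfield localization $\sPsh(\Delta \times \mathcal{C})_{\mathcal{S}^\Delta}$ exists, and its fibrant objects are the injective fibrant, $\mathcal{S}^\Delta$-local objects.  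Identifying these with Segal objects is then a routine mapping-space calculation: locality at $\Se^\Delta$ is equivalent to condition (2) via the identifications $\Map(F[m] \times F_\mathcal{C}(c), W) \cong W([m],c)$ together with $\Map(G[m] \times F_\mathcal{C}(c), W) \cong W([1],c) \times_{W([0],c)} \cdots \times_{W([0],c)} W([1],c)$, and locality at $\Rec^\Delta(\mathcal{S})$ is equivalent to condition (3) via the characterization in Section \ref{mapping} of $M_W^\Delta(x_0,x_1)(c)$ as the fiber of $\Map(\Sigma(F_\mathcal{C}(c)), W) \to W([0],t)^2$ over $(x_0,x_1)$.

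For the cartesian property, I would verify the product criterion for cartesian presentations: for every $s \in \mathcal{S}^\Delta$ and every representable $F_{\Delta \times \mathcal{C}}([k],d) \cong F[k] \times F_\mathcal{C}(d)$, the product $s \times F[k] \times F_\mathcal{C}(d)$ lies in $\overline{\mathcal{S}^\Delta}$.  The case $s \in \Se^\Delta$ reduces to the cartesian property of the complete Segal space model structure on $\sPsh(\Delta)$, propagated through the $\mathcal{C}$-factors by the observation that $\Se^\Delta$-locality is stable under exponentiation by $F_\mathcal{C}(c) \times F_\mathcal{C}(d)$ applied levelwise.

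The main obstacle is the recursive case $s = \Sigma(f)$ with $f \in \mathcal{S}$.  Here I would exploit the pushout presentation $\Sigma(X) \cong F[1] \times X \cup_{\partial F[1] \times X} \partial F[1]$ together with cartesianness of $(\mathcal{C}, \mathcal{S})$, which gives $f \times F_\mathcal{C}(d) \in \overline{\mathcal{S}}$ and hence $\Sigma(f \times F_\mathcal{C}(d)) \in \overline{\Rec^\Delta(\mathcal{S})}$, in order to compare this with $\Sigma(f) \times F_\mathcal{C}(d)$.  These two maps are not isomorphic, since the basepoints $\partial F[1]$ in the pushout defining $\Sigma$ do not carry an $F_\mathcal{C}(d)$ factor; the discrepancy is concentrated in the summands involving $\partial F[1] \times F_\mathcal{C}(d) \to \partial F[1]$, and must be absorbed using Segal equivalences, which identify these basepoint contributions after localization.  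The further product with $F[k]$ is then handled by decomposing $F[k]$ along its Segal presentation and distributing across the pushouts.  The delicate bookkeeping ensuring that the Segal-type and recursive-type equivalences interact correctly is exactly what the combined definition of $\mathcal{S}^\Delta$ is designed to accommodate.
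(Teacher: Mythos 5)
Your localizing set, the existence argument via left Bousfield localization of the injective structure, and the identification of the $\mathcal{S}^\Delta$-fibrant objects with the Segal objects all match the paper. The divergence is in the cartesian claim, and there your argument has a genuine gap in the recursive case. You propose to compare $\Sigma(f)\times F_\mathcal{C}(d)$ with $\Sigma(f\times F_\mathcal{C}(d))$ and to absorb the difference --- which is exactly two copies of the collapse map $F([0],d)\to F([0],t)$ at the endpoints of the suspension --- ``using Segal equivalences.'' But those collapse maps are not in $\overline{\Se^\Delta\cup\Rec^\Delta(\mathcal{S})}$: they form the separate set $\Coll^\Delta$, which is inverted only in the further localization defining \emph{complete} Segal objects (Theorem \ref{completemc}). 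If they were already equivalences at this stage, every Segal object would satisfy the essential-constancy condition $W([0],t)\xrightarrow{\sim}W([0],c)$, which is false in general and is precisely what the extra localization is designed to impose. So the discrepancy between $\Sigma(f)\times F_\mathcal{C}(d)$ and $\Sigma(f\times F_\mathcal{C}(d))$ cannot be absorbed here, and this step of your reduction breaks down.

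The claim you are after is still true, but the workable route is the other, equivalent criterion for a cartesian presentation: show that $W^{F_{\Delta\times\mathcal{C}}([k],d)}$ is again a Segal object whenever $W$ is, so that
\[ \Map(\Sigma(f)\times F[k]\times F_\mathcal{C}(d),\, W)\;\cong\;\Map(\Sigma(f),\, W^{F[k]\times F_\mathcal{C}(d)}) \]
is a weak equivalence simply because $\Sigma(f)\in\Rec^\Delta(\mathcal{S})$ and the exponential is local. This is what the paper does: it splits $F_{\Delta\times\mathcal{C}}([k],d)\cong F_\Delta[k]\times F_\mathcal{C}(d)$, proves the adjunction isomorphisms $W^{F([k],t)}(-,c)\cong W(-,c)^{F_\Delta[k]}$ and $W^{F([0],d)}([m],-)\cong W([m],-)^{F_\mathcal{C}(d)}$, and then invokes the cartesianness of the Segal space presentation on $\sPsh(\Delta)$ and of $(\mathcal{C},\mathcal{S})$. (A smaller slip: for the Segal maps you appeal to the cartesianness of the complete Segal space model structure on $\sPsh(\Delta)$; what is needed, and what suffices since no completeness is imposed here, is the cartesianness of the Segal space presentation.)
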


\begin{proof}
The existence of the model structure is obtained using a left Bousfield localization of the Reedy model structure \cite[4.1.1]{hirsch}.
We localize with respect to the union of the following two sets of morphisms:
\begin{enumerate}
\item $\Se^\Delta=\{ G[m] \times Fc \rightarrow F[m]\times Fc \}$, for all $c \in \ob(\mathcal C)$ and $m \geq 2$, and

\item $\Rec^\Delta(\mathcal{S}) = \{ \Sigma(s) \colon \Sigma(S) \rightarrow \Sigma(S') \}$
for all $(s\colon S \rightarrow S') \in \mathcal{S}$.
\end{enumerate}

An object $W$ which is local with respect to $\Se^\Delta$ satisfies the condition that $W(-, c)$ is a Segal space for all objects $c$ of $\mathcal C$, and if $W$ is local with respect to $\Rec^\Delta$ then $W([m], -)$ is an $\mathcal S$-local object in $\sPsh(\mathcal C)$ for all $[m] \in \ob(\Delta)$.

To prove that the model structure is cartesian when $(\mathcal C, \mathcal S)$ is, we apply Proposition \ref{cartesian} to $\Delta \times \mathcal C$ and use the localization just described.  Therefore, we must prove that if $W$ is a Segal object, then so is $W^{F_{\Delta \times \mathcal C}([m], c)}$ for any object $([m], c)$ of $\Delta \times \mathcal C$.

Since $W$ is assumed to be a Segal object, we know that, for any object $[m]$ of $\Delta$, $W([m], -)$ is a $\mathcal S$-local, and, for any object $c$ of $\mathcal C$, $W(-, c)$ is a Segal space.  Since the Segal space model structure and $\sPsh(\mathcal C)_{\mathcal S}$ are cartesian, we know that $W(-, c)^{F_\Delta[m]}$ is a Segal space and $W([m], -)^{F_\mathcal C(c)}$ is $\mathcal S$-local.  Therefore, it suffices to prove that
\[ W^{F_{\Delta \times \mathcal C}([m], t)}(-, c) \cong W(-, c)^{F_\Delta[m]} \]
 for every object $[m]$ of $\Delta$ and the terminal object $t$ of $\mathcal C$, and
\[ W^{F_{\Delta \times \mathcal C}([0], c)}([m], -) \cong W([m], -)^{F_\mathcal C(c)} \]
for every object $c$ of $\mathcal C$.
We prove the first of these isomorphisms; the second is proved analogously.

Let $\Mapobj_\Delta$, $\Mapobj_\mathcal C$, and $\Mapobj_{\Delta \times \mathcal C}$ denote the internal hom objects in $\sPsh(\Delta)$, $\sPsh(\mathcal C)$, and $\sPsh(\Delta \times \mathcal C)$, respectively. Then we have isomorphisms on the level of $k$-simplices
\[ \begin{aligned}
\left[W(-, c)^{F_\Delta[m]} \right]_k & \cong \Mapobj_\Delta(F_\Delta[k], W(-, c)^{F_\Delta[m]})_0 \\
& \cong \Mapobj_\Delta(F_\Delta[k] \times F_\Delta[m], W(-, c))_0 \\
& \cong \Mapobj_\Delta(F_\Delta[k] \times F_\Delta[m], ([\ell] \mapsto \Map_\mathcal C(F_\mathcal C(c), W([\ell], -))(t)))_0 \\
& \cong \Mapobj_{\Delta \times \mathcal C}(F_\Delta[k] \times F_\Delta[m] \times F_\mathcal C(c), W)([0], t) \\
& \cong \Mapobj_{\Delta \times \mathcal C}(F_\Delta[k] \times F_\mathcal C(c), W^{F_\Delta[m]})([0], t) \\
& \cong \Mapobj_{\Delta \times \mathcal C}(F_\Delta[k] \times F_\mathcal C(c), W^{F_{\Delta \times \mathcal C}([m], t)})([0], t) \\
& \cong \Mapobj_\Delta(F_\Delta[k], W^{F_{\Delta \times \mathcal C}([m], t)}(- , c))_0 \\
& \cong \left[W^{F_{\Delta \times \mathcal C}([m], t)} (-, c)\right]_k
\end{aligned} \]
which proves the desired isomorphism.
\end{proof}

We now add further conditions to obtain complete Segal objects.

\begin{definition} \label{cssobj}
Let $(\mathcal C,\mathcal S)$ be a presentation.  An object $W$ of $\sPsh(\Delta \times \mathcal C)$ is a \emph{complete Segal object} relative to $\mathcal S$ if:
\begin{enumerate}
\item \label{injfib} the object $W$ is injective fibrant,

\item \label{segal} for all $m \geq 2$ and $c \in \ob(\mathcal C)$, the map
\[ (W \delta^{01},\cdots, W \delta^{m-1,m}) \colon W([m],c) \rightarrow W([1],c)\times_{W([0],c)} \cdots \times_{W([0],c)}  W([1],c) \]
is a weak equivalence of spaces,

\item \label{localmapping} for all $x_0,x_1\in W([0],t)$, the object $M_W^\Delta(x_0,x_1)$ in $\sPsh(\mathcal C)$ is $\mathcal{S}$-local,

\item \label{complete0} the object $\tau_\Delta^* W$ of $\sPsh(\Delta)$ is a complete Segal space, and

\item \label{essconst} for all objects $c \in \mathcal C$, the map $W([0],t) \rightarrow W([0],c)$ is a weak equivalence.
\end{enumerate}
\end{definition}

\begin{remark}
The need for condition (\ref{essconst}) requires some explanation, since it is unnecessary in the case of complete Segal spaces.  A simplicial object in any category $\mathcal C$, satisfying the Segal condition, models an internal category $\mathcal C$, where both the morphisms and the objects have the structure of objects of $\mathcal C$.  However, our objective in defining complete Segal objects is to have an up-to-homotopy version of categories enriched in $\mathcal C$, where only the morphisms are objects of $\mathcal C$ and the objects form a set.

This feature is more distinct in the structure of Segal category objects, where the degree zero object is forced to be a set.  Looking for a moment at the case of $(\infty, 1)$-categories, the transition from Segal categories (with discrete space in degree zero) to complete Segal spaces (where we drop the discreteness assumption but require completeness) can be thought of as a homotopical change: a set is replaced by a space.  Here, we want the same transition.  In particular, we want the object in degree zero to be a space, not a more general object of $\sPsh(\mathcal C)$.
\end{remark}

\begin{remark} \label{4vs4'}
Condition (\ref{complete0}) is equivalent to requiring that the simplicial space $W(-, t)$ be a complete Segal space, where $t$ is the terminal object of $\mathcal C$.  In other words, this assumption corresponds to the desired weak equivalence between the object in degree zero and the object of homotopy equivalences.  We do not assume what one might expect, which is the following condition:
\begin{itemize}
\item[(4')] $W(-, c)$ is a complete Segal space for any object $c$ in $\mathcal C$.
\end{itemize}
This condition is, however, the more desirable one, in that it is compatible with the cartesian structure.  As we will see, in some cases the two conditions are equivalent.
\end{remark}

\begin{theorem} \label{completemc}
There is a simplicial model structure on $\sPsh(\Delta \times \mathcal C)$ in which the fibrant objects are the complete Segal space objects.
\end{theorem}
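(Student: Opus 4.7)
The plan is to construct the desired model structure as a further left Bousfield localization of the Segal object model structure of Theorem~\ref{ssthetansp}.  Since that model structure is itself a left Bousfield localization of the combinatorial, left proper injective model structure on $\sPsh(\Delta \times \mathcal C)$, Hirschhorn's existence result \cite[4.1.1]{hirsch} applies and produces a further localization for any set of localizing cofibrations.  The content is therefore the choice of localizing set: we take the union $\Cpt^\Delta \cup \Coll^\Delta$, where
\[ \Cpt^\Delta = \{\pi_\Delta^* z\}, \qquad \Coll^\Delta = \{F_{\Delta \times \mathcal C}([0],c) \rightarrow F_{\Delta \times \mathcal C}([0],t)\}_{c \in \ob(\mathcal C)}, \]
with $z \colon E \rightarrow F[0]$ the standard completeness map in $\sPsh(\Delta)$ and the second set induced by the unique morphism $c \rightarrow t$.

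To see that the fibrant objects of this localization are precisely the complete Segal objects, observe that locality against $\Coll^\Delta$ is, by Yoneda, exactly condition (5): $W([0],t) \rightarrow W([0],c)$ is a weak equivalence for every $c$.  For $\Cpt^\Delta$, the natural isomorphism $\pi_\Delta^* \cong (\tau_\Delta)_\#$ from Section~\ref{presheaf} together with the adjunction $(\tau_\Delta)_\# \dashv \tau_\Delta^*$ show that locality of $W$ against $\pi_\Delta^* z$ is equivalent to locality of $\tau_\Delta^* W = W(-,t)$ against $z$ in $\sPsh(\Delta)$.  Since $W$ is already a Segal object, taking $c = t$ in condition (2) makes $\tau_\Delta^* W$ a Segal space, so this additional locality upgrades it to a complete Segal space, giving condition (4).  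Conversely, any complete Segal object is manifestly local against both sets, hence fibrant.

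The main subtlety, flagged in Remark~\ref{4vs4'}, is that we deliberately localize against $\pi_\Delta^* z$ rather than against the larger set $\{\pi_\Delta^* z \times F_{\Delta \times \mathcal C}([0],c)\}_c$: condition (4) constrains only $\tau_\Delta^* W = W(-,t)$, not $W(-,c)$ for general $c$.  This weaker localization is what is needed for the Quillen equivalence with $\Theta$-spaces developed in later sections, but it has the expected cost that the localized model structure is not in general cartesian, so (unlike Theorem~\ref{ssthetansp}) no cartesian statement is claimed here.  Everything else reduces to bookkeeping around Hirschhorn's theorem once the localizing set has been identified.
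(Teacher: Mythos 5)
Your proposal is correct and follows essentially the same route as the paper: the paper defines $\mathcal S^\Delta = \Se^\Delta \cup \Cpt^\Delta \cup \Rec^\Delta(\mathcal S) \cup \Coll^\Delta$ and localizes the injective model structure at this set in one step, which agrees with your iterated localization of the Segal object model structure at $\Cpt^\Delta \cup \Coll^\Delta$. Your identification of locality against $\Coll^\Delta$ with condition (5) and of locality against $\pi_\Delta^* z$ with condition (4) via $\pi_\Delta^* \cong (\tau_\Delta)_\#$ is exactly the intended (if unstated) justification.
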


\begin{proof}
Consider the following sets of maps in $\sPsh(\Delta \times \mathcal C)$.
\begin{itemize}
\item Let $\Se^\Delta=\{ G[m] \times Fc \rightarrow F[m]\times Fc \}$, where $c$ ranges over objects of $\mathcal C$, and $m \geq 2$.

\item Let $\Rec^\Delta(\mathcal{S}) = \{ \Sigma(s) \colon \Sigma(S) \rightarrow \Sigma(S') \}$
for all $(s\colon S \rightarrow S') \in \mathcal{S}$.

\item Let $\Cpt^\Delta=\{ \pi_\Delta^*z \}$, where $z \colon E \rightarrow F[0]$ is the map defining completeness in $\sPsh(\Delta)$.

\item Let $\Coll^\Delta = \{ F([0],c)\rightarrow F([0],t) \}$ for all $c$ in $C$.
\end{itemize}

The naming is done analogously to $\Theta$-objects; the name of the last set is meant to suggest collapse maps.  Let $\mathcal{S}^\Delta= \Se^\Delta\cup \Cpt^\Delta \cup \Rec^\Delta(\mathcal{S}) \cup \Coll^\Delta$.  Then $X$ is a complete Segal object if and only if it is injective fibrant and $\mathcal{S}^\Delta$-local.

Localizing the injective model structure with respect to $\mathcal S^\Delta$ produces the desired model structure.
\end{proof}

The following result allows us to identify complete Segal objects.

\begin{prop}\label{prop:we-local-objects-delta}
Let $f \colon W \rightarrow Z$ be a map between $\mathcal{S}^\Delta$-local objects in $\sPsh(\Delta \times \mathcal C)$.  Then $f$ is in $\overline{\mathcal{S}^\Delta}$  if and only if
\[ f([0],t) \colon W([0],t) \rightarrow Z([0],t) \]
and
\[ f([1],c) \colon W([1],c) \rightarrow Z([1],c), \]
for all objects $c$ of $C$, are weak equivalences of spaces.
\end{prop}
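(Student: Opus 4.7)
The plan is to mirror the proof of Proposition \ref{prop:we-local-objects-theta}, with the new ingredient being the use of locality with respect to the collapse maps in $\Coll^\Delta$. First I reduce to the case where both $W$ and $Z$ are $\mathcal{S}^\Delta$-fibrant: since any $\mathcal{S}^\Delta$-local object is levelwise weakly equivalent to an $\mathcal{S}^\Delta$-fibrant one, I can choose fibrant replacements $W\to\widetilde W$ and $Z\to\widetilde Z$ and a compatible lift $\widetilde f\colon \widetilde W\to \widetilde Z$. Then $f\in\overline{\mathcal{S}^\Delta}$ iff $\widetilde f\in\overline{\mathcal{S}^\Delta}$, and likewise the hypothesis on the values of $f$ at $([0],t)$ and $([1],c)$ passes to $\widetilde f$. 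The forward implication is then immediate from the general fact that a map between fibrant objects in a left Bousfield localization lies in $\overline{\mathcal{S}^\Delta}$ exactly when it is a levelwise weak equivalence.

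For the converse, assume $W$ and $Z$ are $\mathcal{S}^\Delta$-fibrant and that $f([0],t)$ and each $f([1],c)$ is a weak equivalence. I must show that $f([m],c)$ is a weak equivalence for every $[m]\in\ob(\Delta)$ and every $c\in\ob(\mathcal C)$. Because $W$ and $Z$ are local with respect to $\Coll^\Delta$, the structure maps
\[ W([0],t)\to W([0],c) \qquad\text{and}\qquad Z([0],t)\to Z([0],c) \]
are weak equivalences for every $c$. Combined with the hypothesis that $f([0],t)$ is a weak equivalence, the 2-out-of-3 property applied to the naturality square gives that $f([0],c)$ is a weak equivalence for every $c$. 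Then the Segal condition, together with the fact that $W$ and $Z$ are injective fibrant (so the Segal map is a genuine weak equivalence of homotopy limits), identifies $W([m],c)$ with the homotopy fiber product of $m$ copies of $W([1],c)$ over $W([0],c)$, and similarly for $Z$. Since weak equivalences between fibrant diagrams induce weak equivalences on homotopy limits, $f([m],c)$ is a weak equivalence for all $m\geq 2$ as well.

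There is no deep obstacle here; the only delicacy beyond the $\Theta$-object version is that the value $W([0],c)$ appearing in the Segal homotopy fiber product is \emph{not} the same as the hypothesis-controlled $W([0],t)$, so one must invoke $\Coll^\Delta$-locality to bridge the two. This is precisely the reason $\Coll^\Delta$ was added to $\mathcal S^\Delta$ in Theorem \ref{completemc}, and it reflects the distinction between ``internal category'' and ``enriched category'' flavors of complete Segal object discussed in the remark following that theorem.
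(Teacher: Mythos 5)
Your proof is correct and follows essentially the same route as the paper's (much terser) argument: reduce to $\mathcal{S}^\Delta$-fibrant objects and then exhibit each $W([m],c)$ as a homotopy limit of the values at $([1],c)$ and $([0],t)$. Your explicit identification of where $\Coll^\Delta$-locality enters --- bridging $W([0],c)$ in the Segal fiber product with the hypothesis-controlled $W([0],t)$ --- is exactly the content the paper compresses into the phrase ``homotopy limit of values at $([1],c)$ and $([0],t)$.''
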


\begin{proof}
Recall that a map between $\mathcal{S}^\Delta$-fibrant objects is in $\overline{\mathcal{S}^\Delta}$ if and only if it is a levelwise weak equivalence.  For $\mathcal{S}^\Delta$-fibrant objects, the value at a general $([m],c)$ in $\Delta \times \mathcal C$ is a homotopy limit of values at $([1],c)$ and $([0],t)$, so the proposition holds when $W$ and $Z$ are $\mathcal{S}^\Delta$-fibrant.  Since any $\mathcal{S}^\Delta$-local object is levelwise weakly equivalent to a $\mathcal{S}^\Delta$-fibrant object, the proposition follows.
\end{proof}

We now turn to model structures on $\sPsh(\Delta \times \mathcal C)$ where only some of the conditions for complete Segal objects hold.

\begin{prop}  \label{almostcomplete}
There is a simplicial model structure $\mathcal{ACSS}$ on the category $\sPsh(\Delta \times \mathcal C)$ in which the fibrant objects satisfy conditions \eqref{injfib}-\eqref{complete0}.
\end{prop}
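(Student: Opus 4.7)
The plan is to construct this model structure as a left Bousfield localization of the injective model structure on $\sPsh(\Delta \times \mathcal C)$ with respect to the set
\[ \mathcal{S}^\Delta_0 := \Se^\Delta \cup \Rec^\Delta(\mathcal{S}) \cup \Cpt^\Delta, \]
that is, the same set of maps used in Theorem \ref{completemc} with the collapse maps $\Coll^\Delta$ removed. Since the injective model structure on $\sPsh(\Delta \times \mathcal C)$ is left proper and combinatorial, the existence of the localization follows from Hirschhorn's theorem \cite[4.1.1]{hirsch}, exactly as in Theorem \ref{completemc}. It then remains to identify the fibrant objects.

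An object is fibrant in the localization precisely when it is injective fibrant and $\mathcal{S}^\Delta_0$-local. Injective fibrancy is condition \eqref{injfib}. Locality with respect to $\Se^\Delta$ amounts to $W(-,c)$ being a Segal space for every $c \in \ob(\mathcal C)$, which, combined with injective fibrancy, is equivalent to condition \eqref{segal}, as observed in the proof of Theorem \ref{ssthetansp}. Locality with respect to $\Rec^\Delta(\mathcal{S})$ says that for every $s \colon S \to S'$ in $\mathcal{S}$, the induced map of derived mapping spaces $\Map(\Sigma S', W) \to \Map(\Sigma S, W)$ is a weak equivalence; by the description of $M_W^\Delta$ as a fiber of a mapping space out of $\Sigma$ (given in Section~\ref{mapping}) this is equivalent to $M_W^\Delta(x_0,x_1)$ being $\mathcal{S}$-local for every choice of basepoints, i.e.\ condition \eqref{recursive}.

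The only remaining point concerns the completeness set $\Cpt^\Delta = \{\pi_\Delta^* z\}$. Here I would use that $\pi_\Delta^*$ preserves monomorphisms and levelwise weak equivalences, and is therefore left Quillen for the injective model structures on $\sPsh(\Delta)$ and $\sPsh(\Delta \times \mathcal C)$, with right adjoint $\pi_{\Delta *} \cong \tau_\Delta^*$. The adjunction then gives a natural weak equivalence of derived mapping spaces
\[ R\Map_{\sPsh(\Delta \times \mathcal C)}(\pi_\Delta^* E, W) \simeq R\Map_{\sPsh(\Delta)}(E, \tau_\Delta^* W), \]
and similarly for $\pi_\Delta^* F[0]$. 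Hence locality of $W$ with respect to $\pi_\Delta^* z$ is equivalent to locality of $\tau_\Delta^* W$ with respect to $z$. Since condition \eqref{segal} already implies that $\tau_\Delta^* W = W(-,t)$ is a Segal space, $z$-locality of $\tau_\Delta^* W$ is exactly the completeness condition \eqref{complete0}.

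The main point requiring care is the last one: verifying that locality against $\pi_\Delta^* z$ really does cut out completeness of the underlying simplicial space $\tau_\Delta^* W$, and that we are computing derived, not strict, mapping spaces. Once the Quillen adjunction $\pi_\Delta^* \dashv \tau_\Delta^*$ is in hand this is routine, and it immediately yields the characterization of fibrant objects as those satisfying \eqref{injfib}--\eqref{complete0}, as desired.
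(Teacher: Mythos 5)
Your proof is correct and takes essentially the same route as the paper: the paper obtains the model structure by localizing the Segal object model structure of Theorem \ref{ssthetansp} at $\Cpt^\Delta$, which is the two-step version of your one-step localization of the injective structure at $\Se^\Delta \cup \Rec^\Delta(\mathcal{S}) \cup \Cpt^\Delta$. Your identification of $\Cpt^\Delta$-locality with completeness of $\tau_\Delta^*W$ via the Quillen adjunction $\pi_\Delta^* \dashv \tau_\Delta^*$ is a correct elaboration of a point the paper leaves implicit.
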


\begin{proof}
The existence of the model structure can be obtained by localizing the Segal object model structure with respect to $\Cpt^\Delta$.
\end{proof}

We call the fibrant objects in this model structure \emph{almost complete Segal objects}. However, this model structure as described is not cartesian.  Therefore, we include the following result.

\begin{prop} \label{123'}
There is a cartesian model structure on the category $\sPsh(\Delta \times \Theta_n)$ in which the fibrant objects satisfy conditions \eqref{injfib}-\eqref{recursive} and (4').
\end{prop}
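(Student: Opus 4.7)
The plan is to localize the Segal object model structure of Theorem \ref{ssthetansp} (already cartesian for the cartesian presentation $(\Theta_n, \mathcal{S})$) with respect to an enlarged completeness set that directly enforces (4'). Set
\[
\widetilde{\Cpt}^\Delta := \bigl\{\, \pi_\Delta^*(z) \times F_{\Delta \times \Theta_n}([m], \theta) \to F_{\Delta \times \Theta_n}([m], \theta) \,:\, [m] \in \ob(\Delta),\ \theta \in \ob(\Theta_n) \,\bigr\}
\]
and $\mathcal{T} := \Se^\Delta \cup \Rec^\Delta(\mathcal{S}) \cup \widetilde{\Cpt}^\Delta$. A left Bousfield localization of the injective model structure on $\sPsh(\Delta \times \Theta_n)$, as in the proofs of Theorems \ref{ssthetansp} and \ref{completemc}, produces the desired model structure.

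To identify the fibrant objects, observe that locality with respect to $\Se^\Delta$ and $\Rec^\Delta(\mathcal{S})$ delivers conditions (2) and (3) exactly as in Theorem \ref{ssthetansp}, and in particular $W(-, \theta)$ is a Segal space for each $\theta$. For (4'), the key point is the identification $F_{\Delta \times \Theta_n}([0], \theta) \cong \pi_{\Theta_n}^* F_\Theta(\theta)$, so a direct adjunction computation shows that locality with respect to $\pi_\Delta^*(z) \times F([0], \theta)$ is equivalent to the completeness of the simplicial space $W(-, \theta)$; combined with the Segal condition, this is exactly (4'). The additional maps in $\widetilde{\Cpt}^\Delta$ indexed by $m \geq 1$ are redundant for fibrancy (since the complete Segal space model structure is cartesian and $F_\Delta[m]$ is cofibrant), but they are essential for the cartesian verification.

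For the cartesian property I would apply Proposition \ref{cartesian}, reducing to checking that $W^{F_{\Delta \times \Theta_n}([m], \theta)}$ is $\mathcal{T}$-local whenever $W$ is. Conditions (1)--(3) are preserved by exactly the computation from the proof of Theorem \ref{ssthetansp}, based on the two isomorphisms of internal hom objects proved there. For (4'), factor $F([m], \theta) \cong F([m], t) \times F([0], \theta)$ and chain those isomorphisms to obtain
\[
W^{F([m], \theta)}(-, \theta') \cong \bigl( W^{F([0], \theta)}(-, \theta') \bigr)^{F_\Delta[m]}.
\]
Since the complete Segal space model structure is cartesian and $F_\Delta[m]$ is cofibrant, it then suffices to show that $W^{F([0], \theta)}(-, \theta')$ is a complete Segal space for every pair $\theta, \theta'$. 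The Segal portion is immediate from the cartesian-ness established in Theorem \ref{ssthetansp}. The completeness portion is the main technical obstacle: by the equivalent characterization of cartesian presentation it reduces to verifying that $\pi_\Delta^*(z) \times F([0], \theta) \times F_{\Delta \times \Theta_n}([m'], \theta')$ lies in $\overline{\mathcal{T}}$ for all $[m'], \theta'$, which in turn reduces to decomposing $F_\Theta(\theta) \times F_\Theta(\theta')$ as a colimit of representables in $\sPsh(\Theta_n)$ and then invoking, term by term, the defining membership in $\widetilde{\Cpt}^\Delta$.
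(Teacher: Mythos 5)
Your proposal is correct and follows essentially the same route as the paper: the paper also obtains the model structure by localizing the Segal object model structure of Theorem \ref{ssthetansp} at completeness maps crossed with representables (it uses only the maps $F[0]\times F\theta \to E\times F\theta$, i.e.\ your $m=0$ case, which as you note already determines the same local objects), and it asserts that cartesianness is verified by the same internal-hom computation as in Theorem \ref{ssthetansp}. Your write-up supplies the details the paper leaves implicit, in particular the reduction of (4$'$) for $W^{F([0],\theta)}(-,\theta')$ to expressing $F_\Theta(\theta)\times F_\Theta(\theta')$ as a (homotopy) colimit of representables; just make sure that colimit is taken to be the canonical one over the category of elements so that it is indeed a homotopy colimit, since $\overline{\mathcal T}$ is only closed under homotopy colimits.
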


\begin{proof}
The model structure can be obtained by localizing the Segal object model structure with respect to the set of maps
\[ \{F[0] \times Fc \rightarrow E \times Fc \} \]
where $c$ ranges over all objects of $\mathcal C$.
The proof that the model structure is cartesian can proved similarly as the Segal object model structure was in Theorem \ref{ssthetansp}.
\end{proof}

These two model structures actually coincide in the special case when $\mathcal C=\Theta_n$.  The following proposition is a generalization of a result of Johnson-Freyd and Scheimbauer \cite[2.8]{jfs}; while their proof is given in the context of $\sPsh(\Delta^n)$, their argument can be modified to our context.

\begin{prop} \label{4implies4'}
An object in $\sPsh(\Delta \times \Theta_n)$ which satisfies conditions \eqref{injfib}-\eqref{complete0} also satisfies the stronger condition (4').
\end{prop}

\section{Comparison between complete Segal objects and $\Theta$-objects}

In this section we establish a Quillen equivalence between the model structures for $\Theta$-objects and the model structure for complete Segal objects.

\begin{prop}\label{prop:d-lowerstar-fibrant}
The functor $d_*\colon \sPsh(\Delta \times \mathcal C) \rightarrow \sPsh(\Theta \mathcal C)$ takes $\mathcal{S}^\Delta$-fibrant objects to $\mathcal{S}^\Theta$-fibrant objects.
\end{prop}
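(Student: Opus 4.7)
The plan is to verify that $d_* W$ satisfies the four axioms of a $\Theta$-object (Definition 4.1). Injective fibrancy (1) is immediate from $d^* \dashv d_*$ being a Quillen pair between injective model structures. Condition (4) follows from Proposition 3.10, which gives $M^\Theta_{d_* W}(x_0,x_1) \cong M^\Delta_W(x_0,x_1)$; the latter is $\mathcal{S}$-local because $W$ is a Segal object. For condition (3), I would first establish the natural isomorphism $\tau_\Theta^* d_* \cong \tau_\Delta^*$ by a short computation: $\tau_{\Theta\#} F_\Delta[n] \cong F_\Theta([n](t,\dots,t))$, and using that $t$ is terminal one checks $d^* F_\Theta([n](t,\dots,t)) \cong F_{\Delta\times\mathcal C}([n],t) \cong \tau_{\Delta\#} F_\Delta[n]$; thus $\tau_\Theta^* d_* W \cong \tau_\Delta^* W$, which is a complete Segal space since $W$ inherits completeness from $\Cpt^\Delta$-locality and the Segal condition from $\Se^\Delta$-locality specialized to $c = t$.

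The crux is the Segal condition (2). By the Quillen adjunction, it is equivalent to verify that $d^*$ sends each generator of $\mathcal{S}^\Theta$ into $\overline{\mathcal{S}^\Delta}$. The easy generators give $d^*(\pi_\Theta^* z) \cong \pi_\Delta^* z \in \Cpt^\Delta$ (using $\pi_\Theta d = \pi_\Delta$) and $d^*(V[1](g)) \cong \Sigma(g) \in \Rec^\Delta(\mathcal{S})$ by the definition of $\Sigma$. For the Segal generator $\iota_m \colon V_{G[m]}[m](Fc_1,\dots,Fc_m) \to V[m](Fc_1,\dots,Fc_m)$, since $d^*$ and $V_{(-)}[m]$ preserve colimits, one identifies
\[
d^* V_{G[m]}[m](Fc_1,\dots,Fc_m) \cong \Sigma(Fc_1) \cup_{F[0]} \cdots \cup_{F[0]} \Sigma(Fc_m).
\]
Proposition \ref{prop:inductive-d-upperstar-v} then presents $d^* V[m](Fc_1,\dots,Fc_m)$ as a homotopy pushout of $F[m] \times Fc_1 \times \cdots \times Fc_m$ with $d^* V_{T_m}[m](Fc_1,\dots,Fc_m)$ along $T_m \times Fc_1 \times \cdots \times Fc_m$, together with the further recursive decomposition $d^* V_{T_m}[m] \cong d^* V[m-1](Fc_1,\dots,Fc_{m-1}) \cup_{d^* V[m-2](Fc_2,\dots,Fc_{m-1})} d^* V[m-1](Fc_2,\dots,Fc_m)$ and an analogous decomposition of $d^* V_{G[m]}[m]$.

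I would then induct on $m$, with base cases $m \leq 1$ trivial (since $G[0] = F[0]$ and $G[1] = F[1]$). The inductive step combines two pieces: the cube lemma for left proper model categories promotes the induction hypotheses for $m-1$ and $m-2$ to give $d^* V_{G[m]}[m] \to d^* V_{T_m}[m] \in \overline{\mathcal{S}^\Delta}$; and since $T_m \hookrightarrow F[m]$ is a Segal equivalence (both $T_m$ and $F[m]$ contain and are Segal equivalent to the spine $G[m]$), appealing to the cartesian Segal object localization from Theorem \ref{ssthetansp} shows that $T_m \times Fc_1 \times \cdots \times Fc_m \to F[m] \times Fc_1 \times \cdots \times Fc_m$ is a Segal-object equivalence, hence lies in $\overline{\mathcal{S}^\Delta}$; left properness promotes this to the pushout $d^* V_{T_m}[m] \to d^* V[m]$. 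Composing gives $d^* \iota_m \in \overline{\mathcal{S}^\Delta}$, completing the verification. The main obstacle is this last step: propagating the basic Segal equivalence through the multi-factor product $\prod_i Fc_i$, which relies essentially on the cartesian structure of the Segal object localization.
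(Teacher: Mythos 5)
Your proposal is correct and follows essentially the same route as the paper: reduce via the adjunction to showing that $d^*$ carries the generators of $\mathcal S^\Theta$ into $\overline{\mathcal S^\Delta}$, identify $d^*\Cpt^\Theta$ and $d^*\Rec^\Theta(\mathcal S)$ with $\Cpt^\Delta$ and $\Rec^\Delta(\mathcal S)$, and handle the Segal generators by induction on $m$ using the pushout decomposition of Proposition \ref{prop:inductive-d-upperstar-v} together with cartesianness of the Segal localization to control the products with the $Fc_i$. The only difference is cosmetic: the paper applies the gluing lemma to the whole map of pushout squares at once (checking the three corner maps $\alpha$, $\beta$, $\gamma$), whereas you factor $d^*\iota_m$ through $d^*V_{T_m}[m]$ and treat the two factors separately.
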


\begin{proof}
Given a complete Segal object $W$, we must check conditions (1),(2), (3), and (4) of Definition \ref{thetaobject} for $d_*W$ to be a $\Theta$-object.   Since $d_*$ is the right adjoint of a Quillen pair between injective model categories, $d_*W$ is injective fibrant, establishing (1).

Recall that $\Map_{\sPsh(\Theta \mathcal C)}(X, d_*W) \cong \Map_{\sPsh(\Delta \times \mathcal C)}(d^*X, W)$.  Thus, to show that (2), (3), and (4) hold, it suffices to show that $d^*\Se^\Theta$, $d^*\Cpt^\Theta$, and $d^*\Rec^\Theta(\mathcal{S})$ are all contained in $\overline{\mathcal{S}^\Delta}$.

We note that to show that a map $f$ is contained in $\overline{\mathcal{S}^\Delta}$, it suffices to show that $f$ is weakly equivalent (with respect to levelwise weak equivalences in $\sPsh(\Delta\times \mathcal C)$) to a homotopy colimit $\hocolim f_\alpha$, where each $f_\alpha \in \overline{\mathcal S^\Delta}$, and the homotopy colimit is computed with respect to the levelwise weak equivalences.

We further note that if $B \xleftarrow{f} A \rightarrow C$ is a diagram in any of the categories $\sPsh(\Delta \times \mathcal C)$, $\sPsh(\Theta \mathcal C)$, or $\sPsh(\Delta)$ such that $f$ is an injective cofibration (i.e., a levelwise monomorphism), then the pushout of the diagram is also a homotopy pushout with respect to levelwise weak equivalences.  Furthermore, $d^*$ preserves such diagrams, being the left adjoint of a Quillen pair between injective model structures.

To prove (2), we want to show that $d^*\Se^\Theta \subseteq \overline{\mathcal S^\Delta}$.  Consider the maps
\[ \varphi^{m,\underline{c}}=V_{\varphi^m}[m](Fc_1,\dots,Fc_m)\colon V_{G[m]} [m](Fc_1,\dots,Fc_m) \rightarrow V[m](Fc_1,\dots,Fc_m) \]
in $\Se^\Theta$.
We aim to show by induction on $m$ that $d^*\varphi^{m,\underline{c}} \in \overline{\mathcal S^\Delta}$ for all $\underline{c}=c_1,\dots,c_m$.  The cases of $m=0$ and $m=1$ are immediate, since $\varphi^0$ and $\varphi^{1,c}$ are isomorphisms.

Now suppose $m \geq 2$.  We need to show that $d^*\varphi^{m,\underline{c}} \in \overline{\mathcal S^\Delta}$ by using the (homotopy) pushout square of Proposition \ref{prop:inductive-d-upperstar-v}.  That is, we need to show that each of the three maps
\[ \alpha=(T_m\times_{F[m]} \varphi^m)\times (Fc_1\times\cdots \times Fc_m), \]
\[ \beta=\varphi^m\times (Fc_1\times \cdots \times Fc_m), \]
and
\[ \gamma=d^*(V_{T_m\times_{F[m]}s_m}[m](Fc_1,\dots,Fc_m)). \]
are contained in $\overline{\mathcal S^\Delta}$.   (Recall that $T_m=F[0,m-1]\cup_{F[1,m-1]}F[1,m]$.)

Because $\Se^\Delta$ is cartesian, we know that $\varphi^k \times Y \in \overline{\Se^\Delta} \subseteq \overline{\mathcal S^\Delta}$ for all $k \geq 0$ and all $Y$ in $\sPsh(\Delta \times \mathcal C)$.   Thus the map $\beta$ is in $\overline{\mathcal S^\Delta}$.

It is straightforward to check that the pullback of the map $\varphi^m\colon G[m] \rightarrow F[m]$ along the inclusion $F[p,p+k] \rightarrow F[m]$ is isomorphic to the map $\varphi^k \colon G[k] \rightarrow F[k]$.  We can use this observation to identify both $\alpha$ and $\gamma$ as homotopy pushouts of maps in $\overline{\mathcal S^\Delta}$.

That is, the map $\alpha$ is itself a (homotopy) pushout of maps of the form $\varphi^k \times Y$, and thus is in $\overline{\mathcal S^\Delta}$.   Similarly, the map $\gamma$ is a (homotopy) pushout of maps of the form $V_{\varphi^k}[k](Fc_{i_1},\dots,Fc_{i_k})$, which are contained in $\overline{\mathcal S^\Delta}$ by the inductive hypothesis, since $k<m$.

To prove (3), recall that $d^*\pi_\Theta^*=\pi_\Delta^*$, and thus $d^*\Cpt^\Theta = \Cpt^\Delta \subseteq \overline{\mathcal S^\Delta}$.  Lastly, to prove (4), recall from Section \ref{intertwiningsec} that $d^*V[1](f)=\Sigma(f)$, and thus $d^*\Rec^\Theta(\mathcal S)\subseteq \Rec^\Delta(\mathcal S) \subseteq \overline{\mathcal S^\Delta}$.
\end{proof}

\begin{cor} \label{dquillenpair}
The adjoint pair
\[ d^*\colon \sPsh(\Theta \mathcal C)_{\mathcal S^\Theta} \rightleftarrows \sPsh(\Delta \times \mathcal C)_{\mathcal S^\Delta}\noloc d_* \]
is a Quillen pair.
\end{cor}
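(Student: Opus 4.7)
The plan is brief because this corollary is a formal consequence of the preceding Proposition \ref{prop:d-lowerstar-fibrant}. Recall from Section 3 that
\[ d^* \colon \sPsh(\Theta \mathcal{C}) \rightleftarrows \sPsh(\Delta \times \mathcal{C}) \noloc d_* \]
is already a Quillen pair between the injective model structures. Since left Bousfield localization does not alter the class of cofibrations, $d^*$ automatically preserves cofibrations between the localized structures $\sPsh(\Theta \mathcal{C})_{\mathcal{S}^\Theta}$ and $\sPsh(\Delta \times \mathcal{C})_{\mathcal{S}^\Delta}$. The only remaining content is to show that $d^*$ carries trivial cofibrations in the localized sense to weak equivalences in the target, or equivalently that $d_*$ preserves fibrations between fibrant objects.

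For this I would invoke the standard criterion of Hirschhorn for an existing Quillen pair to descend to left Bousfield localizations: given a Quillen pair $(F,G)$ between $M$ and $N$ and localizations $L_S M$ and $L_T N$, the pair remains Quillen between the localizations if and only if $G$ sends fibrant objects of $L_T N$ to fibrant objects of $L_S M$. Applied to $F = d^*$, $G = d_*$, $S = \mathcal{S}^\Theta$, and $T = \mathcal{S}^\Delta$, the required hypothesis becomes precisely the statement that $d_*$ carries complete Segal objects to $\Theta$-objects, which is exactly Proposition \ref{prop:d-lowerstar-fibrant}.

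Consequently, the proof is essentially a single sentence: combine the Quillen pair on injective model structures with Proposition \ref{prop:d-lowerstar-fibrant} and the cited criterion. There is no substantive obstacle at this stage; all the real work was absorbed into Proposition \ref{prop:d-lowerstar-fibrant}, where the inductive pushout argument (using Proposition \ref{prop:inductive-d-upperstar-v}) verified that $d^*\Se^\Theta$, $d^*\Cpt^\Theta$, and $d^*\Rec^\Theta(\mathcal{S})$ all lie in $\overline{\mathcal{S}^\Delta}$. The corollary is the clean formal packaging of that computation.
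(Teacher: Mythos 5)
Your proposal is correct and matches the paper's argument: the paper likewise deduces the corollary formally from Proposition \ref{prop:d-lowerstar-fibrant}, noting that $d^*$ preserves cofibrations (unchanged by localization) and that the proposition implies $d^*(\overline{\mathcal S^\Theta}) \subseteq \overline{\mathcal S^\Delta}$, which is the left-adjoint formulation of the Hirschhorn criterion you cite.
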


\begin{proof}
Using Proposition \ref{prop:d-lowerstar-fibrant}, we can conclude that $d^*(\overline{\mathcal S^\Theta}) \subseteq \overline{\mathcal S^\Delta}$, and thus $d^*$ preserves both cofibrations and acyclic cofibrations.
\end{proof}

\begin{prop} \label{prop:d-upperstar-local}
The functor $d^*\colon \sPsh(\Theta \mathcal C) \rightarrow \sPsh(\Delta \times \mathcal C)$ takes $\mathcal S^\Theta$-local objects to $\mathcal S^\Delta$-local objects.
\end{prop}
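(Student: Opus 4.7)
The plan is to verify directly that whenever $X$ is a $\Theta$-object, $d^*X$ satisfies conditions \eqref{segal}--\eqref{essconst} in the definition of complete Segal object; injective fibrancy \eqref{injfib} is then arranged via a levelwise-equivalence fibrant replacement that preserves the structural conditions.  I would first reduce to the case that $X$ is $\mathcal S^\Theta$-fibrant rather than merely $\mathcal S^\Theta$-local, which is legitimate since $d^*$ is essentially evaluation along $d$ and so preserves levelwise weak equivalences.

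I would then check the four structural conditions by unfolding definitions.  Condition \eqref{essconst} is automatic: $d([0],c) = [0]$ in $\Theta \mathcal C$ since the tuple of objects is empty, so $(d^*X)([0],c) = X([0])$ is independent of $c$ and the relevant map is the identity.  Condition \eqref{complete0} follows from the identity $d\tau_\Delta = \tau_\Theta$, which yields $\tau_\Delta^* d^* X \cong \tau_\Theta^* X$, a complete Segal space by hypothesis on $X$.  For condition \eqref{segal}, the map
\[ (d^*X)([m],c) \to (d^*X)([1],c) \times_{(d^*X)([0],c)} \cdots \times_{(d^*X)([0],c)} (d^*X)([1],c) \]
unfolds to the Segal map for $X$ applied to the constant tuple $(c,\dots,c)$, which is a weak equivalence since $X$ is a $\Theta$-object.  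For condition \eqref{recursive}, the mapping-object compatibility established earlier in Section 3 gives $M^\Delta_{d^*X}(x_0,x_1) \cong M^\Theta_X(x_0,x_1)$, and the right side is $\mathcal S$-local by hypothesis on $X$.

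To finish, I would take any injective fibrant replacement $d^*X \to Y$; this is a levelwise equivalence, so $Y$ inherits conditions \eqref{segal}--\eqref{essconst} (each invariant under levelwise equivalence in the presence of fibrancy) while acquiring injective fibrancy.  Hence $Y$ is $\mathcal S^\Delta$-fibrant, exhibiting $d^*X$ as $\mathcal S^\Delta$-local.  The argument is essentially formal once the compatibility isomorphisms from Section 3 are in hand; the only point requiring real care is the bookkeeping needed to transport the homotopy-theoretic conditions between $X$ and $d^*X$ through the fibrant replacement, and I do not anticipate a serious obstacle beyond that.
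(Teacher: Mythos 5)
Your proposal is correct and follows essentially the same route as the paper: reduce to $X$ being $\mathcal S^\Theta$-fibrant, use the identities $(d^*X)([m],c)\cong X[m](c,\dots,c)$, $\tau_\Delta^*d^*\cong\tau_\Theta^*$, and $M^\Delta_{d^*X}\cong M^\Theta_X$ to verify conditions (2)--(5), and pass through an injective fibrant replacement. The paper simply checks the conditions directly on the fibrant replacement $Y=\mathcal F d^*X$ rather than on $d^*X$ first, making explicit the fibration arguments (e.g.\ that $X[1](c)\to X[0]^{\times 2}$ and $Y([1],c)\to Y([0],c)^{\times 2}$ are fibrations) that justify the ``bookkeeping'' you defer.
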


\begin{proof}
Given an $\mathcal S^\Theta$-local object $X$ of $\sPsh(\Theta \mathcal C)$, we want to show that $d^*X$ is $\mathcal S^\Delta$-local.  Since $d^*$ preserves all levelwise weak equivalences, we can assume without loss of generality that $X$ is also injective fibrant, i.e., that $X$ is $\mathcal S^\Theta$-fibrant.  It suffices to show that an injective fibrant replacement $f\colon d^*X \rightarrow \mathcal F d^*X$ is $\mathcal S^\Delta$-fibrant.  Thus, we must show that $Y=\mathcal F d^*X$ satisfies conditions (2)--(5) of Definition \ref{cssobj} for a complete Segal object.

First we prove (2).  We have that
\[ Y([m],c) \simeq (d^*X)([m],c)= X([m](c,\dots,c)), \] while
\[ Y([1],c)\times_{Y([0],c)} \cdots \times_{Y([0],c)} Y([1],c) \cong X[1](c)\times_{X[0]}^h \cdots \times_{X[0]}^h X[1](c). \]
That (2) holds is then immediate from the fact that $X$ is $\Se^\Theta$-fibrant.

Next we prove (3).  We have that
\[ Y([m],t) \cong (d^*X)([m],t)= X[m](t,\dots,t) \]
for all $m$.  That is, there is a levelwise weak equivalence $\tau_\Theta^*X \rightarrow \tau_\Delta^*Y$ in $\sPsh(\Delta)$.
Then (3) follows from the fact that $X$ is $\Cpt^\Theta$-fibrant.

Next we establish (4).  For each object $c$ in $C$, the map $Y([1],c) \rightarrow Y([0],c)^{\times 2}$ (which is a fibration since $Y$ is injective fibrant and $\partial F[1] \times Fc \rightarrow F[1] \times Fc$ is an injective cofibration in $\sPsh(\Delta\times \mathcal C)$) is weakly equivalent to the map $X([1](c)) \rightarrow X([0])^{\times 2}$.  The latter map is also a fibration since $X$ is injective
fibrant and $F[0] \amalg F[0] \rightarrow F[1](c)$ is an injective cofibration in $\sPsh(\Theta \mathcal C)$), and
the map $X[0] \rightarrow Y([0],t)$ is a weak equivalence. Thus we have a levelwise weak equivalence
\[ M_X^\Theta(x_0,x_1)  = M^\Delta_{d_*X}(x_0, x_1)\rightarrow M_Y^\Delta(f(x_0),f(x_1)) \]
in $\sPsh(\mathcal C)$ for all $x_0, x_1 \in X[0]$.  Now (4) follows from the fact that $X$ is $\Rec^\Theta(\mathcal S)$-fibrant.

Lastly, we prove (5).  We have that $d^*X([0],c) = X([0])$, and thus $d^*X([0],t) \rightarrow d^*X([0],c)$ is an isomorphism for all $c$.  Therefore $Y([0],t) \rightarrow Y([0],c)$ is a weak equivalence for all $c$.
\end{proof}

\begin{theorem} \label{thetacompletesegal}
The adjoint pair
\[ d^*\colon \sPsh(\Theta \mathcal C)_{\mathcal S^\Theta} \rightleftarrows \sPsh(\Delta \times \mathcal C)_{\mathcal S^\Delta} \noloc d_* \]
is a Quillen equivalence.
\end{theorem}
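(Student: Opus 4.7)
The plan is to apply a standard criterion for Quillen equivalences: a Quillen adjunction is a Quillen equivalence provided the left adjoint reflects weak equivalences between cofibrant objects and, for every fibrant object of the target, the counit is a weak equivalence. Since every object is cofibrant in the injective model structures and $d^*$ preserves all weak equivalences, the first condition reduces (by fibrant replacement) to a statement about $\mathcal{S}^\Theta$-fibrant objects. Thus we split the argument into: (I) $d^*$ reflects weak equivalences between $\mathcal{S}^\Theta$-fibrant objects, and (II) for every $\mathcal{S}^\Delta$-fibrant $W$, the counit $\varepsilon \colon d^*d_*W \to W$ lies in $\overline{\mathcal{S}^\Delta}$.

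For (I), let $X, Y$ be $\mathcal{S}^\Theta$-fibrant and suppose $d^*X \to d^*Y$ is in $\overline{\mathcal{S}^\Delta}$. By Proposition~\ref{prop:d-upperstar-local}, both $d^*X$ and $d^*Y$ are $\mathcal{S}^\Delta$-local, so Proposition~\ref{prop:we-local-objects-delta} shows the map is a weak equivalence at $([0],t)$ and at $([1],c)$ for every $c$. Using $d([0],t)=[0]$ and $d([1],c)=[1](c)$, these values are precisely $X[0] \to Y[0]$ and $X[1](c) \to Y[1](c)$, so Proposition~\ref{prop:we-local-objects-theta} yields that $X \to Y$ is in $\overline{\mathcal{S}^\Theta}$.

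For (II), fix an $\mathcal{S}^\Delta$-fibrant $W$. Then $d_*W$ is $\mathcal{S}^\Theta$-fibrant by Proposition~\ref{prop:d-lowerstar-fibrant} and $d^*d_*W$ is $\mathcal{S}^\Delta$-local by Proposition~\ref{prop:d-upperstar-local}. By Proposition~\ref{prop:we-local-objects-delta}, it suffices to check $\varepsilon$ at $([0],t)$ and at each $([1],c)$. At $([0],t)$ both sides identify naturally with $W([0],t)$ (since $d^*$ and $d_*$ preserve underlying space), and the counit is the identity. At $([1],c)$, consider the commutative square
\[
\xymatrix{
(d_*W)([1](c)) \ar[r]^-{\varepsilon} \ar[d] & W([1],c) \ar[d] \\
W([0],t)^{\times 2} \ar[r] & W([0],c)^{\times 2}
}
\]
obtained from the two endpoint face maps. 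Both vertical maps are fibrations of simplicial sets because $W$ and $d_*W$ are injective fibrant. The bottom horizontal map is the square of the component $W([0],t) \to W([0],c)$ of $\varepsilon$ at $([0],c)$, which is a weak equivalence by condition (5) of complete Segal object. The fibers over any chosen $(x_0,x_1) \in W([0],t)^{\times 2}$ are $M^\Theta_{d_*W}(x_0,x_1)(c)$ and $M^\Delta_W(x_0,x_1)(c)$, identified under $\varepsilon$ by Proposition~3.10. A standard comparison of fibration sequences (equivalently, the five lemma applied to the long exact sequences of homotopy groups) then forces $\varepsilon$ to be a weak equivalence at $([1],c)$.

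The main technical hurdle is Step~(II) at $([1],c)$: it is precisely the interplay between condition~(5) (equating the bases up to weak equivalence) and Proposition~3.10 (identifying the fibers) that makes the fibration comparison succeed. Without the essential-constancy condition the bases $W([0],t)^{\times 2}$ and $W([0],c)^{\times 2}$ need not agree up to homotopy and the argument would break, which is exactly why the collapse maps $\Coll^\Delta$ are included in $\mathcal{S}^\Delta$.
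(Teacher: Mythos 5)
Your proposal is correct, and it diverges from the paper's proof in an interesting way in one of its two halves. For the counit condition (your Step II), you are in essence reproducing the paper's argument: your commutative square at $([1],c)$ is, after the identifications $(d_*W)([1](c)) \cong \Map(\Sigma(Fc),W)$ and $W([0],t)^{\times 2}\cong\Map(\partial F[1]\times Ft,W)$, exactly the square the paper writes down, and both arguments hinge on the bottom map being a weak equivalence because $W$ is local with respect to $\Coll^\Delta$. The paper observes that the square is a strict (hence homotopy) pullback along a fibration and concludes directly; you instead compare fibers via Proposition 3.10 and run the five lemma, which reaches the same conclusion with one extra input. The genuine difference is in the other half: the paper verifies the derived unit condition, showing $X \to d_*\mathcal F_{\mathcal S^\Delta}d^*X$ is in $\overline{\mathcal S^\Theta}$ for every $X$, which requires a separate computation (their Lemma \ref{lemma:we-lemma-unit}) analyzing the unit at $[0]$ and $[1](c)$ after injective fibrant replacement. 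You replace this with the criterion that the left adjoint reflects weak equivalences between cofibrant objects, which here follows purely formally from Propositions \ref{prop:d-upperstar-local}, \ref{prop:we-local-objects-delta}, and \ref{prop:we-local-objects-theta}, together with the observation that $d([0],t)=[0]$ and $d([1],c)=[1](c)$ so that the detecting levels on the two sides match up. Your route avoids any analysis of the unit map and is arguably more economical, at the cost of invoking the less-standard (though entirely classical) formulation of Quillen equivalence; the paper's route is self-contained and makes the unit explicit, which is occasionally useful downstream. Two small points worth making explicit if you write this up: the reduction of the reflection property to fibrant objects uses that $d^*$ sends $\overline{\mathcal S^\Theta}$ into $\overline{\mathcal S^\Delta}$ (Ken Brown's lemma applied to the left Quillen functor, all objects being cofibrant), and one should note that the counit really does induce the isomorphism of fibers asserted by Proposition 3.10, which is a routine naturality check.
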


\begin{proof}
Recall that all objects are cofibrant in both model categories.  We must prove that:
\begin{enumerate}
\item for any $\mathcal S^\Delta$-fibrant object $W$ in $\sPsh(\Delta \times \mathcal C)$, the map $d^*d_*W \rightarrow W$
is in $\overline{\mathcal S^\Delta}$, and

\item for any object $X$ in $\sPsh(\Theta \mathcal C)$, the composite
\[ X \rightarrow d_*d^*X \rightarrow d_* \mathcal F_{\mathcal S^\Delta}d^*X \]
is in $\overline{\mathcal S^\Theta}$, where $\mathcal F_{\mathcal S^\Delta}$ is a fibrant replacement functor in
$\sPsh(\Delta \times \mathcal C)_{\mathcal S^\Delta}$.
\end{enumerate}

First we prove (1).  By Propositions \ref{prop:d-lowerstar-fibrant} and \ref{prop:d-upperstar-local}, we know that $d^*d_*W$ is $\mathcal S^\Delta$-local.  Since the map in question is between two local objects, it suffices by Proposition \ref{prop:we-local-objects-delta} to show that the maps 
\[ (d^*d_*W)([0],t) \rightarrow W([0],t) \qquad \text{and} \qquad (d^*d_*W)([1],c) \rightarrow W([1],c) \]
are weak equivalences.  The first map is actually an isomorphism, using the fact that
\[ \tau_\Delta^*d^*d_* \cong \tau^*_\Theta d_*\cong (\pi_\Theta)_*d_* \cong (\pi_\Delta)_* \cong \tau_\Delta^*. \]
For the second map, note that
\[ (d^*d_*W)([1],c) \cong \Map_{\sPsh(\Delta\times \mathcal C)}(\Sigma(Fc),W), \]
and that, using the fact that $\Sigma(Fc)$ is defined via a pushout (see Section \ref{intertwiningsec}), the map can be identified in the top arrow of the pullback square
\[\xymatrix{ {\Map(\Sigma(Fc),W)} \ar[r] \ar[d]  & {\Map(F[1]\times Fc, W)} \ar[d] \\
{\Map(\partial F[1]\times Ft, W)} \ar[r] & {\Map(\partial F[1]\times Fc, W).}  }\]
The square is in fact a homotopy pullback square of spaces (since $W$ is injective fibrant and $\partial F[1]\times Fc \rightarrow F[1] \times Fc$ is an injective cofibration).  The result follows from the fact that the bottom horizontal arrow is a weak equivalence, since $W$ is $\mathcal S^\Delta$-fibrant.

Next we prove (2).  Choose an $\mathcal S^\Theta$-fibrant replacement $X \rightarrow \mathcal F_{\mathcal S^\Theta}X$, and consider the commutative square
\[\xymatrix{ X \ar[r] \ar[d]  & {d_* \mathcal F_{\mathcal S^\Delta}d^* X} \ar[d] \\
{\mathcal F_{\mathcal S^\Theta} X} \ar[r] & d_* \mathcal F_{\mathcal S^\Delta} d^*\mathcal F_{\mathcal S^\Theta} X }\]
in which the vertical arrows are in $\overline{\mathcal S^\Theta}$.  For the right-hand arrow, we use that
\[ d^*\colon \sPsh(\Theta \mathcal C)_{\mathcal S^\Theta} \rightleftarrows \sPsh(\Delta \times \mathcal  C)_{\mathcal S^\Delta}\noloc d_*\]
is a Quillen pair, by Corollary \ref{dquillenpair}, and that all objects are cofibrant. Thus, to show that the top horizontal arrow is in $\overline{\mathcal S^\Theta}$, it suffices to show the bottom horizontal arrow is.  Furthermore, since $d^*$ takes $\mathcal S^\Theta$-local objects to $\mathcal S^\Delta$-local objects by Proposition \ref{prop:d-upperstar-local}, we can replace the $\mathcal F_{\mathcal S^\Delta}$ in the lower right-hand corner with an injective fibrant replacement functor $\mathcal F$.  The resulting object $d_* \mathcal F d^*\mathcal F_{\mathcal S^\Theta} X$ is itself $\mathcal S^\Theta$-fibrant by Proposition \ref{prop:d-lowerstar-fibrant}.

Therefore, by Proposition \ref{prop:we-local-objects-theta} it suffices to prove that if $X$ is a $\mathcal S^\Theta$-fibrant object of $\sPsh(\Theta \mathcal C)$,  then the maps
\[ X[0] \rightarrow (d_*\mathcal F d^*X)[0],\qquad X[1](c) \rightarrow (d_*\mathcal F d^*X)[1](c) \]
are weak equivalences of spaces. The result follows using Lemma \ref{lemma:we-lemma-unit}.
\end{proof}

\begin{lemma} \label{lemma:we-lemma-unit}
Let $X$ be any object of $\sPsh(\Theta \mathcal C)$.  Then the maps
\[ X[0] \rightarrow (d_* \mathcal F d^*X)[0], \qquad X[1](c) \rightarrow (d_* \mathcal F d^*X)[1](c) \]
are weak equivalences of spaces.
\end{lemma}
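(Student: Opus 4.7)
The plan is to verify each of the two weak equivalences by computing $(d_* W)[0]$ and $(d_* W)[1](c)$ explicitly via the adjunction $d^* \dashv d_*$, reducing the lemma to properties of the levelwise weak equivalence $d^*X \rightarrow \mathcal F d^*X$.

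For the degree zero map, since $d^*F_\Theta[0] \cong F([0], t)$, the adjunction gives $(d_*W)[0] \cong \Map(F([0], t), W) \cong W([0], t)$ for every $W \in \sPsh(\Delta \times \mathcal C)$. Hence $X[0] \rightarrow (d_* \mathcal F d^*X)[0]$ is identified with the evaluation of $d^*X \rightarrow \mathcal F d^*X$ at $([0], t)$ and is therefore a weak equivalence, since injective fibrant replacement is a levelwise weak equivalence.

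For the map at $[1](c)$, use $F_\Theta[1](c) \cong V[1](Fc)$ together with the identity $\Sigma(Fc) := d^*V[1](Fc) \cong F[1] \times Fc \cup_{\partial F[1] \times Fc} \partial F[1]$ from Section 3.3 to obtain
\[ (d_*W)[1](c) \cong \Map(\Sigma(Fc), W) \cong W([1], c) \times_{W([0], c)^{\times 2}} W([0], t)^{\times 2}. \]
For $W = d^*X$, both $(d^*X)([0], c)$ and $(d^*X)([0], t)$ are equal to $X[0]$ and the transition map $W([0], t) \rightarrow W([0], c)$ is the identity (since $d([0], c) = d([0], t) = [0]$ regardless of $c$), so this pullback collapses to $X[1](c)$ and the unit $X[1](c) \rightarrow (d_*d^*X)[1](c)$ is already an isomorphism before fibrant replacement. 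For $W = \mathcal F d^*X$, the same pullback is a homotopy pullback: injective fibrancy of $\mathcal F d^*X$ and the injective cofibration $\partial F[1] \times Fc \rightarrow F[1] \times Fc$ force $(\mathcal F d^*X)([1], c) \rightarrow (\mathcal F d^*X)([0], c)^{\times 2}$ to be a fibration of simplicial sets. The cospan for $d^*X$ maps into the cospan for $\mathcal F d^*X$ by a levelwise weak equivalence at each vertex, and since the former's strict pullback is itself a homotopy pullback (one leg being an identity map), the induced map on homotopy pullbacks --- which is exactly the map in question --- is a weak equivalence.

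The main step is the second one, but there is no real obstacle. The crux is the observation that $d$ sends $([0], c)$ to $[0]$ uniformly in $c$, which collapses the relevant fiber product for $W = d^*X$ and reduces the entire question to a routine comparison of homotopy pullbacks under a levelwise weak equivalence between injective cofibrant objects.
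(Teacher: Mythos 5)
Your proof is correct and follows essentially the same route as the paper's: both identify $(d_*W)[1](c)$ with $\Map(\Sigma(Fc),W)$ via the pushout decomposition of $\Sigma(Fc)$, observe that the leg $W([0],t)\rightarrow W([0],c)$ is an isomorphism for $W=d^*X$ (so the strict pullback collapses to $X[1](c)$) and that the corresponding leg for $\mathcal F d^*X$ sits over a fibration, and then conclude by comparing homotopy pullbacks along the levelwise weak equivalence $d^*X\rightarrow\mathcal F d^*X$. The degree-zero case is handled identically in both arguments.
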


\begin{proof}
In the first case, because $d^*(F[0]) \cong F[0] \times Ft$, the map is isomorphic to the weak equivalence
\[ (d^*X)([0],t) \rightarrow (\mathcal F d^*X)([0],t). \]

In the second case, because $d^*(F[1](c)) \cong \Sigma(Fc)$, the map is isomorphic to
\[ \Map(\Sigma(Fc), d^*X) \rightarrow \Map(\Sigma(Fc), \mathcal F d^*X). \]
Using the pushout decomposition
\[ \Sigma(Fc) \cong (F[1] \times Fc) \cup_{\partial F[1] \times Fc} \partial F[1] \times Ft, \]
we see that we can obtain this map by taking pullbacks of the rows in the diagram
\[\xymatrix{ {(d^*X)([1],c)} \ar[r] \ar[d]_{\simeq} & {(d^*X)([0],c)^{\times 2}} \ar[d]_{\simeq} & {(d^*X)([0],t)^{\times 2}} \ar[l]_-{g}^{\simeq} \ar[d]_{\simeq} \\
{(\mathcal F d^*X)([1],c)} \ar[r]^-{f} & {(\mathcal F d^*X)([0],c)^{\times 2}} & {(\mathcal F d^*X)([0],t)^{\times 2}} \ar[l] }\]
The vertical maps are weak equivalences of spaces.  The map marked $f$ is a fibration, since $\partial F[1] \times Fc \rightarrow F[1] \times Fc$ is an injective cofibration and $\mathcal F d^*X$ is injective fibrant. The map marked $g$ is an isomorphism (and thus a fibration), since $d_\#(F[0] \times Fc \rightarrow F[0] \times Ft)$ is an isomorphism.  Therefore, taking limits along rows computes homotopy pullbacks, and the induced map between them is a weak equivalence.
\end{proof}

\section{Equivalence between $\Thetansp$ and complete Segal objects in $\Theta_{n-1}Sp$}

In this section we look at consequences of the results of the previous section for models of $(\infty, n)$-categories.

Let $\Thetansp$ denote the model category $\sPsh(\Theta_{n})_{\mathcal S^\Theta}$, and let $\css(\Theta_{n-1}Sp)$ denote the model category $\sPsh(\Delta \times \Theta_{n-1})_{\mathcal S^\Delta}$.  Then we have the following special case of Theorem \ref{thetacompletesegal}.

\begin{cor}  \label{cssthetan}
The adjoint pair $(d^*, d_*)$ induces a Quillen equivalence of localized model categories
\[  d_* \colon \css(\Theta_{n-1}Sp) \leftrightarrows \Thetansp \colon d^*. \]
\end{cor}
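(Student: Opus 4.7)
The plan is to obtain the corollary as a direct specialization of Theorem~\ref{thetacompletesegal} to the case $\mathcal{C} = \Theta_{n-1}$, with the localizing set $\mathcal{S}$ chosen so that the localization $\sPsh(\Theta_{n-1})_\mathcal{S}$ is the model structure for $\Theta_{n-1}$-spaces. Concretely, I would define $\mathcal{S}$ by induction on $n$: when $n = 1$ the category $\Theta_{n-1} = \Theta_0$ is trivial, so $\mathcal{S} = \varnothing$ suffices and the resulting localization is just the injective model structure on $\SSets$; when $n \geq 2$, take $\mathcal{S}$ to be the set $\mathcal{S}^\Theta$ associated to the previous stage by the construction of Proposition~4.4.

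To apply Theorem~\ref{thetacompletesegal}, it remains only to check that $(\Theta_{n-1}, \mathcal{S})$ is a cartesian presentation. I would argue this by induction on $n$, using the theorem in Section~4 cited from \cite[6.1, 7.21, 8.1]{rezktheta}, which states that if $(\mathcal{C}, \mathcal{S})$ is a cartesian presentation, then so is $(\Theta \mathcal{C}, \mathcal{S}^\Theta)$. The base case $(\Theta_0, \varnothing)$ is trivially cartesian, and the inductive step is exactly the cited result.

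With this hypothesis verified, the two model categories appearing in Theorem~\ref{thetacompletesegal} become $\sPsh(\Theta \Theta_{n-1})_{\mathcal{S}^\Theta} = \sPsh(\Theta_n)_{\mathcal{S}^\Theta} = \Thetansp$ on one side and $\sPsh(\Delta \times \Theta_{n-1})_{\mathcal{S}^\Delta} = \css(\Theta_{n-1}Sp)$ on the other, matching the statement of the corollary exactly. The Quillen equivalence $d^* \dashv d_*$ then follows immediately. There is no substantive obstacle here: all of the analytic work has already been performed in the proof of Theorem~\ref{thetacompletesegal}, and this corollary is simply its intended specialization to the models for $(\infty, n)$-categories.
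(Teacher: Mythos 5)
Your proposal is correct and matches the paper exactly: the paper offers no separate proof, simply declaring the corollary to be the special case $\mathcal C=\Theta_{n-1}$ of Theorem \ref{thetacompletesegal}, with $\Thetansp = \sPsh(\Theta_n)_{\mathcal S^\Theta}$ and $\css(\Theta_{n-1}Sp)=\sPsh(\Delta\times\Theta_{n-1})_{\mathcal S^\Delta}$. Your added details --- the inductive definition of $\mathcal S$ and the inductive verification that $(\Theta_{n-1},\mathcal S)$ is a cartesian presentation via the cited result of \cite{rezktheta} --- are exactly the implicit bookkeeping the paper relies on from its Section 4 example.
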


However, this Quillen equivalence can be extended to a chain, by iterating the application of the adjoints $(d_*, d^*)$.

Observe that the functor $d$ can be iterated to obtain a chain of functors connecting $\Delta^n$ and $\Theta_n$:
\[ \Delta^n \rightarrow \Delta^{n-1} \times \Delta \rightarrow \Delta^{n-2} \times \Theta_2 \rightarrow \cdots \rightarrow \Delta \times \Theta_{n-1} \rightarrow \Theta_n. \]  This chain induces a string of Quillen pairs
\[ \sPsh(\Delta^n) \leftrightarrows \sPsh(\Delta^{n-1} \times \Delta) \leftrightarrows \cdots \leftrightarrows \sPsh(\Theta_n) \]
on the level of injective model structures.

Then applying Proposition \ref{almostcomplete} to each of these model categories, we have the following result.

\begin{cor}
For any $1 \leq i \leq n$, there exists a model structure $\css^i(\Theta_{n-i}Sp)$ on the category $\sPsh(\Delta^i \times \Theta_{n-i})$ in which the fibrant objects $W$ satisfy the following conditions:
\begin{enumerate}
\item $W$ is injective fibrant,

\item $W(-, x)$ is a Segal space for each object $x$ in $\Delta^{i-1} \times \Theta_{n-i}$,

\item $W([m], -)$ is a complete Segal object in $\css^{i-1}(\Theta_{n-i}Sp)$ for each object $[m]$ of $\Delta$,

\item $W(-, t)$ is a complete Segal space, where $t$ is the terminal object of $\Delta^{i-1} \times \Theta_{n-i}$, and

\item $W([0], -)$ is essentially constant..
\end{enumerate}
\end{cor}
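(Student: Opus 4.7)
The plan is to proceed by induction on $i \geq 1$, with $j := n-i$ held fixed. The base case $i=1$ is immediate: $\css^1(\Theta_j Sp)$ is the model structure $\css(\Theta_j Sp)$ given by Theorem \ref{completemc} applied to the cartesian presentation $(\Theta_j, \mathcal{S})$ defining $\Theta_j Sp$. Since $\Delta^{i-1}$ is trivial in this case, the five conditions of the corollary reduce directly to conditions (1)--(5) defining a complete Segal object; condition (3), which asks that $W([m],-)$ be fibrant in $\css^0(\Theta_j Sp) = \Theta_j Sp$, matches $\mathcal S$-locality of $W([m],-)$ under the Segal and injective-fibrancy conditions.

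For the inductive step, suppose $\css^{i-1}(\Theta_j Sp)$ has been constructed on $\sPsh(\Delta^{i-1} \times \Theta_j)$ as the localization of the injective model structure with respect to a set $\mathcal S_{i-1}$ of maps, and that its fibrant objects are characterized by the five-condition list at level $i-1$. Set $\mathcal C := \Delta^{i-1} \times \Theta_j$, and observe that $\sPsh(\Delta^i \times \Theta_j) \cong \sPsh(\Delta \times \mathcal C)$. Apply Theorem \ref{completemc} to the presentation $(\mathcal C, \mathcal S_{i-1})$ to produce the desired model structure $\css^i(\Theta_j Sp)$, whose fibrant objects are complete Segal objects relative to $\mathcal S_{i-1}$.

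It remains to match the conditions defining a complete Segal object with those of the corollary. Conditions (1), (2), (4), (5) correspond directly, with the last $\Delta$-factor in the role of the distinguished $\Delta$ of Theorem \ref{completemc}, and $\mathcal C = \Delta^{i-1} \times \Theta_j$ in the role of the generic base. For condition (3), the complete Segal object definition asks that $M^\Delta_W(x_0, x_1)$ be $\mathcal S_{i-1}$-local for all $x_0, x_1 \in W([0], t)$; under injective fibrancy, the Segal condition, and essential constancy (conditions (1), (2), (5) of the corollary), the presheaf $W([m], -)$ is a homotopy limit, over the discrete set $W([0], t)^{m+1}$, of mapping objects $M^\Delta_W(x_{k-1}, x_k)$. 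Closure of $\mathcal S_{i-1}$-local objects under homotopy limits then gives $\mathcal S_{i-1}$-locality of $W([m], -)$; combined with its inherited injective fibrancy ($W$ injective fibrant implies $W^{F[m]} = W([m], -)$ injective fibrant), this shows $W([m], -)$ is fibrant in $\css^{i-1}(\Theta_j Sp)$, which by the inductive characterization is condition (3) of the corollary. The reverse implication is immediate from the description of $M^\Delta_W$ as a fiber of $W([m], -)$.

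The main technical point is reconciling condition (3) across the two formulations; this pivots on closure of local objects under the homotopy limits appearing in the Segal decomposition together with the inductive identification of injective-fibrant $\mathcal S_{i-1}$-local objects with $\css^{i-1}$-fibrant objects, both of which are routine. With the model structures in hand, the chain of Quillen equivalences implicit in the surrounding discussion then follows by iterating Theorem \ref{thetacompletesegal} along the chain $\sPsh(\Delta^n) \leftrightarrows \cdots \leftrightarrows \sPsh(\Theta_n)$.
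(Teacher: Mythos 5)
Your overall strategy---induct on $i$, realizing $\css^i(\Theta_{n-i}Sp)$ as the complete Segal object model structure of Theorem \ref{completemc} applied to the presentation $(\Delta^{i-1}\times\Theta_{n-i},\mathcal S_{i-1})$ underlying $\css^{i-1}(\Theta_{n-i}Sp)$---is exactly what the paper intends; the paper itself offers no explicit proof beyond asserting that the result follows by iterating the earlier constructions along the chain $\Delta^n\rightarrow\cdots\rightarrow\Theta_n$, so your reconstruction of the existence statement and of the matching of conditions (1), (2), (4), (5) is faithful to it.

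There is, however, one step in your reconciliation of condition (3) that does not work as written. You claim that $W([m],-)$ is a homotopy limit ``over the discrete set $W([0],t)^{m+1}$'' of the mapping objects $M^\Delta_W(x_{k-1},x_k)$. But $W([0],t)$ is a simplicial set, not a discrete set---discreteness in degree zero is precisely what distinguishes Segal \emph{category} objects from complete Segal objects, and condition (5) only forces $W([0],-)$ to be essentially constant, i.e.\ a space. So $W([m],-)$ does not decompose as a homotopy limit of mapping objects indexed by a set of vertices, and ``closure of local objects under homotopy limits'' cannot be invoked in the form you state. The passage from ``all $M^\Delta_W(x_0,x_1)$ are $\mathcal S_{i-1}$-local'' to ``$W([m],-)$ is $\mathcal S_{i-1}$-local'' is exactly the identification the paper makes (without proof) in the proof of Theorem \ref{ssthetansp}, where $\Rec^\Delta(\mathcal S)$-locality is asserted to be equivalent to $\mathcal S$-locality of each $W([m],-)$; if you want to justify it rather than cite it, you need an argument that handles a fibration $W([1],-)\rightarrow W([0],-)^{\times 2}$ whose base is a (constant) space rather than a discrete object, which is not a formal consequence of closure of local objects under homotopy limits. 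The rest of the proposal, including the remark that the Quillen equivalences then come from iterating Theorem \ref{thetacompletesegal}, is consistent with the paper.
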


Observe that in the case where $i=0$, if we take $\css^0(\Theta_{n-1}Sp)$ to be $\Theta_{n-1}Sp$, then we obtain complete Segal objects in $\Theta_{n-1}Sp$, as described in Corollary \ref{cssthetan}.  Furthermore, if $i=n$, this description of the fibrant objects exactly coincides with the Barwick-Lurie definition of $n$-fold complete Segal spaces \cite[2.1.37]{luriecob}.  Applying Theorem \ref{thetacompletesegal} to the adjoint pairs described above, we obtain the following result.

\begin{cor}
There is a chain of Quillen equivalences
\[ \css^n(\SSets) \leftrightarrows \css^{n-1}(\Theta_1Sp) \leftrightarrows \cdots \leftrightarrows \css(\Theta_{n-1}Sp) \leftrightarrows \Thetansp. \]
\end{cor}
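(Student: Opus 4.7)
The plan is to apply Theorem \ref{thetacompletesegal} iteratively along the chain
\[ \sPsh(\Delta^n) \leftrightarrows \sPsh(\Delta^{n-1} \times \Theta_1) \leftrightarrows \cdots \leftrightarrows \sPsh(\Delta \times \Theta_{n-1}) \leftrightarrows \sPsh(\Theta_n), \]
in which the $i$-th adjoint pair (counted from the right) is induced by the functor $\id_{\Delta^{i-1}} \times d \colon \Delta^{i-1} \times \Delta \times \Theta_{n-i} \rightarrow \Delta^{i-1} \times \Theta_{n-i+1}$. It suffices to show that each such pair is a Quillen equivalence between the $\css^i(\Theta_{n-i}Sp)$ and $\css^{i-1}(\Theta_{n-i+1}Sp)$ model structures, where we interpret $\css^0(\Theta_n Sp)$ as $\Theta_n Sp$; composing these yields the chain.

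For the $i$-th step, I would apply a relativized version of Theorem \ref{thetacompletesegal}, treating $\Delta^{i-1}$ as a parameter category and running the $(d^*, d_*)$ arguments in the remaining two factors. First, I would verify that the right adjoint preserves fibrant objects by an argument parallel to Proposition \ref{prop:d-lowerstar-fibrant}: using the characterization of $\css^i(\Theta_{n-i}Sp)$-fibrant objects given by the preceding corollary, the $\Se^\Delta$, $\Cpt^\Delta$, $\Rec^\Delta$, and $\Coll^\Delta$ conditions for the first $i-1$ factors are preserved essentially for free (since $\id_{\Delta^{i-1}} \times d$ is the identity there), while the conditions in the last two factors are handled exactly as in Proposition \ref{prop:d-lowerstar-fibrant}. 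An analogue of Proposition \ref{prop:d-upperstar-local} likewise shows that $(\id_{\Delta^{i-1}} \times d)^*$ preserves local objects.

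To promote this Quillen pair to a Quillen equivalence, I would imitate the proof of Theorem \ref{thetacompletesegal}: the analogues of Propositions \ref{prop:we-local-objects-delta} and \ref{prop:we-local-objects-theta} reduce the unit and counit conditions to weak equivalences at the generating objects in the last two factors (holding the first $i-1$ factors fixed), and these reduce in turn to Lemma \ref{lemma:we-lemma-unit} applied fiberwise over $\Delta^{i-1}$. The main technical obstacle is justifying the ``relativization'' step, which requires showing that the presentation of $\css^{i-1}(\Theta_{n-i}Sp)$ on $\sPsh(\Delta^{i-1} \times \Theta_{n-i})$ is cartesian so that the hypotheses of Theorem \ref{thetacompletesegal} apply; this can be established by induction on $i$, combining Theorem \ref{ssthetansp} (and the argument of Proposition \ref{123'}) with the cartesianness of $\Theta_{n-i}Sp$ from \cite{rezktheta}.
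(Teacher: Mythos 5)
Your proposal is correct and follows essentially the same route as the paper: the corollary is obtained by applying the $(d^*,d_*)$ Quillen equivalence of Theorem \ref{thetacompletesegal} at each stage of the chain $\Delta^n \rightarrow \cdots \rightarrow \Theta_n$. The paper in fact gives no more justification than this, so your explicit attention to the parametrized (relative to the $\Delta^{i-1}$ factor) version of the theorem, and to the cartesianness needed to run the induction, is if anything more careful than the published argument.
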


\section{Notions of equivalence in complete Segal objects}

The goal for the remainder of the paper is to establish the final desired Quillen equivalence of model categories, namely the one between Segal category objects and complete Segal objects.  Ultimately, we must restrict to complete Segal objects in $\sPsh(\Delta \times \Theta_n)$, rather than in more general $\sPsh(\Delta \times \mathcal C)$, since currently we only have a model structure for Segal category objects in this context.  We refer to complete Segal objects in $\sPsh(\Delta \times \Theta_n)$ as complete Segal objects in $\Thetansp$ and denote the model structure by $\css(\Thetansp)$.  Most of the constructions to this section that we make in this setting can be done in the more general context, however.

We also make use of the model structure $\sPsh(\Delta \times \Theta_n)$ whose fibrant objects satisfy conditions \eqref{injfib}-\eqref{complete0} of Definition \ref{cssobj}, and hence condition (4') by Proposition \ref{4implies4'}, but not necessarily condition \eqref{essconst}.  Recall that we refer to these fibrant objects as \emph{almost complete Segal objects} and denote the corresponding model structure by $\acss(\Thetansp)$.

It is often convenient here to think of (complete) Segal objects here as functors $\Deltaop \rightarrow \Thetansp$, although at times we continue to think of them as functors $\Deltaop \times \Thetanop \rightarrow \SSets$ as previously.  In the latter perspective, when specifying the terminal objects of $\Delta$ and $\Theta_n$, we sometimes clarify by denoting them by $[0]_\Delta$ and $[0]_\Theta$, respectively.

\subsection{Segal objects in $\Thetansp$}

Given a Segal object $W$, define its \emph{objects} to be the elements of the set $\ob(W) = W([0]_\Delta, [0]_\Theta)_0$.   Given two elements $x,y \in \ob(W)$, we define the \emph{mapping object} $M_W^\Delta(x,y)$ as the fiber of the map $(d_1, d_0) \colon W_1 \rightarrow W_0 \times W_0$ over the point $(x,y)$, which is a presheaf on $\Theta_n$.  Since $W$ is assumed to be injective fibrant, which is precisely Reedy fibrant in this case, the mapping object is homotopy invariant.  Observe that this definition coincides with the one given in Section \ref{mapping}.

As we have seen, we can obtain a simplicial space from a $\Theta_n$-space via the functor $\tau_\Theta^\ast$.  Taking the space in degree zero of $\tau_\Theta^*M_W^\Delta(x,y)$, we get mapping spaces for $W$.  As a consequence, it is possible to define the \emph{homotopy category} $\Ho(W)$ of a Segal object $W$ as the homotopy category of the Segal space $\tau^*W$, where the objects are those of $W$ and the morphisms are the components of the mapping spaces.

\setcounter{theorem}{1}

\begin{definition}
A map $f \colon W \rightarrow Z$ of Segal objects in $\Thetansp$ is a \emph{Dwyer-Kan equivalence} if:
\begin{itemize}
\item for any objects $x$ and $y$ of $W$, the induced map $M_W^\Delta(x,y) \rightarrow M_Z^\Delta(fx,fy)$ is a weak equivalence in $\Thetansp$, and

\item the induced functor $\Ho(W) \rightarrow \Ho(Z)$ is an equivalence of categories.
\end{itemize}
\end{definition}

In order to compare complete Segal space objects to Segal category objects, we need to understand the relationship between the weak equivalences in the two model structures.  The purpose of this section is to establish that weak equivalences between Segal objects in $\css(\Thetansp)$ are precisely Dwyer-Kan equivalences.

For any $x \in \ob(W)= W([0]_\Delta, [0]_\Theta)_0$, define its \emph{identity map} $\id_X$ to be $s_0(x) \in M_W^\Delta(x,x)_0$.  Two maps $f, g \in M_W^\Delta(x,y)_0$ are \emph{homotopic} if they lie in the same component of the induced mapping space, denoted by $f \simeq g$.

Recall the more general mapping object $M_W^\Delta(x_0, \ldots, x_k)$ from Section \ref{mapping}; it is a functor $\Thetanop \rightarrow \SSets$ and comes equipped with
induced acyclic fibrations
\[ \varphi_k \colon M_W^\Delta(x_0, \ldots, x_k) \rightarrow M_W^\Delta(x_0, x_1) \times \cdots \times M_W^\Delta(x_{k-1}, x_k). \]

We can then define composition in a Segal object $W$.  Given $(f,g) \in M_W^\Delta(x,y)_0 \times M_W^\Delta(y,z)_0$, a \emph{composition} is a lift of $\varphi_2$ to some $k \in M_W^\Delta(x,y,z)_0$; a \emph{result} of this composition is $d_1(k) \in M_W^\Delta(x,z)_0$.  While $k$ is not uniquely determined, since $\varphi_2$ is an acyclic fibration we can conclude that any two choices for $k$ give results that are homotopic.  Therefore, we write $g \circ f$ for the result of some composition of $f$ and $g$.

The following proposition can be proved as for ordinary Segal spaces \cite[5.4]{rezk}.

\begin{prop}
Let $W$ be a Segal object.  Given $f \in M_W^\Delta(w,x)_0$, $g \in M_W^\Delta(x,y)_0$, and $h \in M_W^\Delta(y,z)_0$, we have $(h \circ g) \circ f \simeq h \circ (g \circ f)$ and $f \circ \id_w \simeq \id_x \circ f$.
\end{prop}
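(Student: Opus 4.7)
The plan is to follow Rezk's argument for ordinary Segal spaces \cite[5.4]{rezk}, reducing both claims to the fact that the Segal condition forces each projection
\[
\varphi_k \colon M_W^\Delta(x_0,\dots,x_k) \longrightarrow M_W^\Delta(x_0,x_1) \times \cdots \times M_W^\Delta(x_{k-1},x_k)
\]
to be an acyclic fibration, combined with the standard simplicial identities among the face and degeneracy maps that $M_W^\Delta(-)$ inherits from the cosimplicial structure of $[k] \in \Delta$.

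For associativity, I would lift the triple $(f,g,h)$ along $\varphi_3$ to a $3$-simplex $\sigma \in M_W^\Delta(w,x,y,z)$. Its $2$-faces encode the two bracketings of the composite: $d_3 \sigma \in M_W^\Delta(w,x,y)$ and $d_0\sigma \in M_W^\Delta(x,y,z)$ are lifts of $(f,g)$ and $(g,h)$ respectively, so that $d_1 d_3 \sigma$ and $d_1 d_0 \sigma$ are valid choices of $g \circ f$ and $h \circ g$. Then $d_1\sigma \in M_W^\Delta(w,y,z)$ lifts $(d_1 d_3 \sigma, h)$ and exhibits $d_1 d_1 \sigma$ as a composite $h \circ (g \circ f)$, while $d_2\sigma \in M_W^\Delta(w,x,z)$ lifts $(f, d_1 d_0 \sigma)$ and exhibits $d_1 d_2 \sigma$ as a composite $(h \circ g) \circ f$. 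The simplicial identity $d_1 d_1 = d_1 d_2$ on $[3]$ gives strict equality of these two representatives, and since any two lifts across the acyclic fibration $\varphi_2$ lie in the same contractible fiber and so are homotopic, this implies $(h \circ g) \circ f \simeq h \circ (g \circ f)$.

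For the unit axioms, I produce canonical lifts using the degeneracies of $\Delta$. The degenerate $2$-simplex $s_0 f \in M_W^\Delta(w,w,x)$ satisfies $\varphi_2(s_0 f) = (\mathrm{id}_w, f)$ by the identities $d_0 s_0 = d_1 s_0 = \mathrm{id}$ and $d_2 s_0 = s_0 d_1$, while $d_1 s_0 f = f$; hence $f$ itself is a valid result of $f \circ \mathrm{id}_w$, giving $f \circ \mathrm{id}_w \simeq f$. Symmetrically, $s_1 f \in M_W^\Delta(w,x,x)$ lifts $(f, \mathrm{id}_x)$ and has $d_1 s_1 f = f$, giving $\mathrm{id}_x \circ f \simeq f$. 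Transitivity of $\simeq$ then yields $f \circ \mathrm{id}_w \simeq \mathrm{id}_x \circ f$.

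I expect no serious obstacle beyond bookkeeping: the only substantive point to verify is that $M_W^\Delta(x_0,\dots,x_k)$ genuinely carries a simplicial structure (with face and degeneracy maps induced from those of $\Delta$) and that the Segal projections $\varphi_k$ transform under these maps in the expected way. This is formal, since $M_W^\Delta(x_0,\dots,x_k)$ is defined as a fiber of the Reedy fibration $W([k],-) \to W([0],-)^{\times (k+1)}$ over a fixed tuple of $0$-simplices, so the entire combinatorial argument proceeds exactly as in the case of ordinary Segal spaces, now taking place in $\Theta_nSp$ rather than in $\SSets$.
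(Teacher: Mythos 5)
Your proof is correct and takes essentially the same route as the paper, which gives no argument of its own beyond citing Rezk \cite[5.4]{rezk}: lifting $(f,g,h)$ along the acyclic fibration $\varphi_3$, invoking $d_1d_1=d_1d_2$ on the resulting $3$-simplex, and using the degeneracies $s_0f$, $s_1f$ for the unit axioms is precisely that argument transported from simplicial sets to $\Thetansp$. The only bookkeeping point worth making explicit is that well-definedness of $g\circ f$ up to homotopy must also be checked when an input such as $h\circ g$ is varied within its component, which follows because an acyclic fibration induces a bijection on path components.
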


\begin{definition}
Let $W$ be a Segal object.  An element $g \in M_W^\Delta(x,y)_0$ is a \emph{homotopy equivalence} if there exist $f, h \in M_W^\Delta(y,x)_0$ such that $g \circ f \simeq \id_y$ and $h \circ g \simeq \id_x$.
\end{definition}

Observe in this case that $h \simeq h \circ g \circ f \simeq f$ and that, for any $x \in \ob(W)$, $\id_x \in M_W^\Delta(x,x)_0$ is a homotopy equivalence.  Once again, the following lemma can be proved just as for complete Segal spaces \cite[5.8]{rezk}.

\begin{prop}
If $g \in W([1],[0]_\Theta)_0$ and there exists a path $F_\Theta[1]([0]) \rightarrow W_1$ from $g$ to a homotopy equivalence $g' \in W([1], [0]_\Theta)_0$, then $g$ is a homotopy equivalence.
\end{prop}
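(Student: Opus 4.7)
The strategy parallels the proof of the analogous statement for simplicial spaces in \cite{rezk}: I show that the subset of homotopy equivalences in $W([1], [0]_\Theta)_\bullet$ is closed under paths, so that $g$ is a homotopy equivalence whenever it is path-connected to one.

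First I record the boundary data. Set $x_0 = d_1 g$, $y_0 = d_0 g$, $x_1 = d_1 g'$, $y_1 = d_0 g'$. Applying the face maps in the $\Delta$-direction to the given path $\gamma$ from $g$ to $g'$ produces 1-simplices $\alpha$ and $\beta$ in $W([0], [0]_\Theta)$ from $x_0$ to $x_1$ and from $y_0$ to $y_1$ respectively. Since $g'$ is a homotopy equivalence, fix $f', h' \in M_W^\Delta(y_1, x_1)_0$ with $g' \circ f' \simeq \id_{y_1}$ and $h' \circ g' \simeq \id_{x_1}$.

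Now I transport. Because $W$ is injective fibrant, the Reedy matching map
\[ (d_1, d_0) : W([1], [0]_\Theta) \longrightarrow W([0], [0]_\Theta)^{\times 2} \]
is a Kan fibration. Lifting the path $(\beta, \alpha)$ ending at $f'$ gives a path in $W([1], [0]_\Theta)$ from some $f \in M_W^\Delta(y_0, x_0)_0$ to $f'$; lifting similarly ending at $h'$ yields an $h \in M_W^\Delta(y_0, x_0)_0$. To verify that $f$ is a right homotopy inverse for $g$, I combine the lifted path from $f$ to $f'$ with the given path $\gamma$ to form a 1-simplex in the fibre product $W([1], [0]_\Theta) \times_{W([0], [0]_\Theta)} W([1], [0]_\Theta)$; the two middle projections agree, both equalling $\beta$. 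The Segal map
\[ W([2], [0]_\Theta) \longrightarrow W([1], [0]_\Theta) \times_{W([0], [0]_\Theta)} W([1], [0]_\Theta) \]
is an acyclic fibration of simplicial sets by the Segal condition together with Reedy fibrancy, so this 1-simplex lifts to a path in $W([2], [0]_\Theta)$ from some $k$ with $\varphi_2(k) = (f, g)$ to some $k'$ with $\varphi_2(k') = (f', g')$. Applying $d_1$ yields a path from $g \circ f = d_1 k$ to $g' \circ f' = d_1 k'$. Concatenating with the homotopy $g' \circ f' \simeq \id_{y_1}$ and with the degenerate 1-simplex $s_0(\beta)$ from $\id_{y_0}$ to $\id_{y_1}$, I conclude $g \circ f \simeq \id_{y_0}$. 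The assertion $h \circ g \simeq \id_{x_0}$ follows by the symmetric argument, so $g$ is a homotopy equivalence.

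The principal technical point is the path-lifting along the acyclic Segal fibration $W([2], [0]_\Theta) \to W([1], [0]_\Theta) \times_{W([0], [0]_\Theta)} W([1], [0]_\Theta)$, since this is what makes precise the compatibility of path-transport with the composition operation; the acyclicity uses the Segal condition and the fibration property follows from injective fibrancy. Beyond this input, the argument is a direct enrichment to $\Thetansp$-valued simplicial objects of the proof in \cite{rezk}, and each step goes through by the same manipulations.
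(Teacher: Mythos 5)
The paper offers no written proof of this proposition; it simply asserts that the statement ``can be proved just as for complete Segal spaces,'' i.e., by Rezk's path-lifting argument, and your proposal is a faithful reconstruction of exactly that argument, so in substance you are on the intended route. Two points should be repaired. First, a small indexing slip: the shared middle projection of the pair consisting of the lifted path $\phi$ (from $f$ to $f'$) and $\gamma$ is $d_0\phi = d_1\gamma = \alpha$, the path from $x_0$ to $x_1$, not $\beta$ (the middle vertex of the composable pair $(f,g)$ is $x_0$, not $y_0$). Second, and more substantively, your final concatenation produces a path from $g\circ f$ to $\id_{y_0}$ inside $W([1],[0]_\Theta)$ that leaves the fiber $M_W^\Delta(y_0,y_0)$ --- it passes through $M_W^\Delta(y_1,y_1)$ --- whereas the relation $\simeq$ is defined as lying in the same component of the mapping object itself, and in general two points of a fiber can be connected in the total space without being connected in the fiber. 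The argument is salvageable exactly as follows: the image of your concatenated path under the fibration $(d_1,d_0)\colon W([1],[0]_\Theta)\to W([0],[0]_\Theta)^{\times 2}$ is the loop given by $(\beta,\beta)$, then a constant path at $(y_1,y_1)$, then $(\beta,\beta)$ reversed, which is null-homotopic rel endpoints; lifting that null-homotopy along the fibration deforms your path, rel its endpoints, into the fiber $M_W^\Delta(y_0,y_0)$ and gives $g\circ f\simeq\id_{y_0}$ in the required sense. The same remark applies to the verification of $h\circ g\simeq\id_{x_0}$. With that step added the proof is complete.
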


\begin{definition}
Define the \emph{homotopy equivalences} of $W$, denoted by $W_{\heq}$, to consist of those components of $W_1$ containing homotopy equivalences.
\end{definition}

Now observe that the degeneracy map $s_0 \colon W_0 \rightarrow W_1$ factors through $W_{\heq}$.   We would like a complete Segal object in $\Thetansp$ to satisfy the condition that the map $s_0 \colon W_0 \rightarrow W_{heq}$ is a weak equivalence in $\Thetansp$.  As in the case of ordinary complete Segal spaces, we can show that this condition is equivalent to condition (4'). 

\setcounter{subsection}{6}

\subsection{Categorical equivalences}

\setcounter{theorem}{7}

Recall that $E$ is the nerve of the category with two objects and a single isomorphism between them.  We can treat it as a discrete simplicial object in $\Thetansp$.  

\begin{definition}
Let $f, g \colon U \rightarrow V$ be maps between Segal space objects.  A \emph{categorical homotopy} between $f$ and $g$ is given by a map $H \colon U \times E \rightarrow V$ such that the diagram
\[ \xymatrix{U \ar[d]_{\id \times i_0} \ar[dr]^f & \\
U \times E \ar[r]^H & V \\
U \ar[u]^{\id \times i_1} \ar[ur]_g & } \]
commutes.  Equivalently, a categorical homotopy is given by a map $H'$ or $H''$ as given in its respective commutative diagram
\[ \xymatrix{& V & F \ar[d]_{i_0} \ar[dr]^{\{f\}} & \\
U \ar[r]^{H'} \ar[ur]^f \ar[dr]_g & V^E \ar[u]_{V^{i_0}} \ar[d] ^{V^{i_1}} & E \ar[r]^{H''} & V^U \\
&  V & F[0] \ar[u]^{i_1} \ar[ur]_{\{g\}} & . } \]
\end{definition}

\begin{prop} \label{catsimpequiv}
Suppose that $U$ and $W$ are Segal objects and $W$ is almost complete.  Then maps $f, g \colon U \rightarrow W$ are categorically homotopic if and only if there exists a homotopy $K \colon U \rightarrow W^{\Delta[1]}$ which restricts to $f$ and $g$ on its endpoints.
\end{prop}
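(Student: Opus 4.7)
The strategy is to show that both sides of the biconditional are equivalent to the condition that $f$ and $g$ lie in the same path component of the mapping space $\Map(U,W)$.

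The forward direction is immediate: given a categorical homotopy $H'\colon U \to W^E$, composing with the map $W^E \to W^{F[1]}$ induced by the inclusion $F[1] \hookrightarrow E$ yields the required $K\colon U \to W^{F[1]}$, with the correct endpoints because the two composites $F[0] \hookrightarrow F[1] \hookrightarrow E$ agree with the endpoint inclusions $F[0] \hookrightarrow E$.

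For the reverse direction, first invoke Proposition \ref{123'}. Since $W$ is a Segal object satisfying (4'), it is fibrant in the cartesian model structure of that proposition; since every object is cofibrant, the internal hom $W^U$ is again fibrant, so in particular $W^U$ satisfies (4'). Evaluating at the terminal object $t \in \Theta_n$ identifies $\tau_\Delta^* W^U \in \sPsh(\Delta)$ as a complete Segal space whose space of objects is the Kan complex $\Map(U,W)$. The existence of a map $K$ with endpoints $f,g$ is precisely the existence of a $1$-simplex in $\Map(U,W)$ from $f$ to $g$, which for a Kan complex is equivalent to $f$ and $g$ lying in the same path component of $\Map(U,W) = (\tau_\Delta^* W^U)_0$. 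By completeness of $\tau_\Delta^* W^U$, being in the same path component of its space of objects corresponds to being isomorphic in the Segal space $\tau_\Delta^* W^U$, hence to the existence of a homotopy equivalence between $f$ and $g$ in $(\tau_\Delta^* W^U)_1$. Such a homotopy equivalence extends to a map $E \to \tau_\Delta^* W^U$, which transposes under the adjunction $\pi_\Delta^* \dashv \tau_\Delta^*$ to the desired categorical homotopy $H'\colon U \to W^E$.

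The main technical input is Proposition \ref{123'} and the cartesianness it provides, since this is exactly what ensures $W^U$ inherits condition (4') from $W$. The remaining ingredients are standard facts from the theory of complete Segal spaces: that in a CSS, path components of the space of objects correspond bijectively to isomorphism classes in the associated homotopy category, and that a $1$-simplex of a Segal space is a homotopy equivalence precisely when it extends to a map out of the walking isomorphism $E$. The proposition is essentially the internalization of the analogous statement for ordinary complete Segal spaces, with (4') playing the role of completeness and the cartesian structure providing the passage from $W$ to $W^U$.
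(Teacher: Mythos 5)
Your reverse direction is sound and takes a genuinely different route from the paper: where the paper forms the pullback $P = W^{F[1]} \times_{W\times W} W^E$, replaces the natural map $W \rightarrow P$ to get a zigzag of acyclic fibrations $W^{F[1]} \leftarrow Q \rightarrow W^E$ over $W \times W$ (using condition (4') to see that $W \rightarrow W^E$ is a levelwise weak equivalence), and then solves a lifting problem against $U$, you instead use the cartesianness supplied by Proposition \ref{123'} to see that $W^U$ inherits (4'), so that $\tau_\Delta^* W^U$ is an honest complete Segal space with object space $\Map(U,W)$, and reduce the statement to the classical $\pi_0$-level dictionary for complete Segal spaces. That reduction is a legitimate alternative, arguably more conceptual, though it imports Rezk's theorem that $\Map(E,Z) \rightarrow Z_{\heq}$ is a trivial fibration for a Reedy fibrant Segal space $Z$, which the paper's lifting argument avoids.

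The gap is in your forward direction, and it comes from an inconsistency about what $F[1]$ denotes. For your reverse direction to work, a homotopy $K \colon U \rightarrow W^{F[1]}$ must literally be a $1$-simplex of the Kan complex $\Map(U,W)$, i.e., $F[1]$ must be the \emph{constant} presheaf with value the simplicial set $\Delta[1]$, so that $W^{F[1]}$ is the simplicial path object; this is also the reading the paper intends (its proof speaks of ``the path fibration'' and ``simplicially homotopic,'' and Proposition \ref{catreedy} needs homotopy in the enrichment direction to identify simplicial homotopy equivalence with Reedy equivalence). Under that reading there is no inclusion $F[1] \hookrightarrow E$: the object $E$ is discrete, so every map from the constant $\Delta[1]$ into $E$ is constant, and the map $W^E \rightarrow W^{F[1]}$ you compose with does not exist. (Under the other reading, where $F[1]$ is the representable $F_\Delta[1]$, your inclusion does exist, but the proposition itself becomes false: a map $U \rightarrow W^{F[1]}$ over $(f,g)$ is then merely a morphism from $f$ to $g$ in the Segal space $\tau_\Delta^* W^U$, which need not be invertible --- take $U=F[0]$ and $W$ the discrete nerve of the walking arrow.) The forward direction is recoverable inside your own framework: a categorical homotopy transposes to a map $E \rightarrow \tau_\Delta^* W^U$ hitting $f$ and $g$, hence to a homotopy equivalence from $f$ to $g$; since $d_0$ and $d_1$ are both $\pi_0$-inverse to the weak equivalence $s_0 \colon (\tau_\Delta^* W^U)_0 \rightarrow (\tau_\Delta^* W^U)_{\heq}$, this forces $[f]=[g]$ in $\pi_0\Map(U,W)$, which yields the path and hence $K$. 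With that repair, and with the standard complete Segal space facts you cite made explicit, your argument is complete.
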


\begin{proof}
Consider the following diagram in the category of functors $\Deltaop \rightarrow \Thetansp$ over $W \times W$:
\[ \xymatrix{W \ar[r] \ar[d] & W^E \ar[d] \\
W^{F[1]} \ar[r] & W \times W} \]
where the maps out of $W$ are given by the inclusion of an object into either $F[1]$ or $E$, as appropriate.  The former is a levelwise weak equivalence since it is the inclusion of constant paths in the path fibration.  The latter is a levelwise weak equivalence because $W$ satisfies condition (4').  We know further that the maps to $W \times W$ are both fibrations in the Reedy model structure.

If $f$ and $g$ are simplicially homotopic, then we have a map $K \colon U \rightarrow W^{F[1]}$ such that the composite with the map to $W \times W$ is precisely $(f,g)$.  To obtain a categorical homotopy, then we need this composite map to factor through $W^E$.  If the maps out of $W$ were fibrations (in particular if $W$ were a pullback), then we could factor $K$ though $W$.  However, they are not.

To remedy this situation, take the pullback
\[ P = W^{F[1]} \times_{W \times W} W^E \]
and then factor the natural map $W \rightarrow P$ as a levelwise acyclic cofibration followed by a fibration $W \rightarrow Q \rightarrow P$.  Now the maps $W^{F[1]} \leftarrow P \rightarrow W^E$ are fibrations, since $P$ is a pullback along fibrations, and by composition so are the maps $W^{F[1]} \leftarrow Q \rightarrow W^E$.  But we also know that these two maps are levelwise weak equivalences, by the 2-out-of-3 property.  Therefore, a lift
\[ \xymatrix{& Q \ar@{->>}[d]^\simeq \\
U \ar[r] \ar@{-->}[ur] & W^{F[1]}} \]
exists, which we can compose with $Q \rightarrow W^E$ to obtain the desired categorical homotopy.

Now observe that this argument is completely symmetric; if we began instead with a categorical homotopy $U \rightarrow W^E$, we can produce a simplicial homotopy $U \rightarrow W^{F[1]}$.
\end{proof}

\begin{definition}
A map $g \colon U \rightarrow V$ of Segal space objects is a \emph{categorical equivalence} if there exist maps $f, h \colon V \rightarrow U$ together with categorical homotopies $gf \simeq \id_V$ and $fg \simeq \id_U$.
\end{definition}

\begin{prop} \label{catreedy}
A map $g \colon U \rightarrow V$ between almost complete Segal objects is a categorical equivalence if and only if it is a levelwise weak equivalence.
\end{prop}

\begin{proof}
We know from Proposition \ref{catsimpequiv} that $g$ is a categorical equivalence if and only if it is a simplicial homotopy equivalence.  Since $U$ and $V$ Reedy fibrant and cofibrant, then simplicial homotopy equivalence coincides with Reedy weak equivalence.
\end{proof}

We now show that categorical equivalences are compatible with the cartesian structure.

\begin{prop}
Let $U$, $V$, and $W$ be Segal objects.  If $f, g \colon U \rightarrow V$ are categorically homotopic maps, then so are the induced maps $W^f, W^g \colon W^V \rightarrow W^U$.
\end{prop}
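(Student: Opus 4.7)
The plan is to produce the required categorical homotopy purely formally from the cartesian closed structure on $\sPsh(\Delta \times \Theta_n)$. The Segal hypotheses on $U$, $V$, and $W$ enter only insofar as they guarantee that $W^U$ and $W^V$ are themselves Segal objects, which follows from the cartesian property of the Segal object model structure established in Theorem \ref{ssthetansp}.

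Starting from a categorical homotopy $H \colon U \times E \rightarrow V$ satisfying $H \circ (\id_U \times i_0) = f$ and $H \circ (\id_U \times i_1) = g$, I would first apply the contravariant exponential functor $W^{(-)}$ to obtain $W^H \colon W^V \rightarrow W^{U \times E}$, then use the canonical isomorphism $W^{U \times E} \cong (W^U)^E$ supplied by cartesian closure, and finally transpose under the exponential adjunction to produce the candidate homotopy
\[ \widetilde{H} \colon W^V \times E \longrightarrow W^U. \]

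It then remains to verify the endpoint conditions $\widetilde{H} \circ (\id_{W^V} \times i_0) = W^f$ and $\widetilde{H} \circ (\id_{W^V} \times i_1) = W^g$. Transposing, these amount to the equalities $(W^U)^{i_0} \circ W^H = W^f$ and $(W^U)^{i_1} \circ W^H = W^g$. Naturality of the isomorphism $(W^U)^{(-)} \cong W^{U \times (-)}$ identifies $(W^U)^{i_j}$ with $W^{\id_U \times i_j}$, after which functoriality of $W^{(-)}$ gives $W^{\id_U \times i_j} \circ W^H = W^{H \circ (\id_U \times i_j)}$, which equals $W^f$ or $W^g$ by the endpoint hypotheses on $H$. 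The argument is entirely formal, so there is no substantive obstacle; the only point requiring care is tracking the variance of the exponential adjunctions so that the endpoint computations land on the correct side. Alternatively, one could begin with the transposed form $H'' \colon E \rightarrow V^U$ of the categorical homotopy and compose with the canonical internal map $V^U \rightarrow (W^U)^{W^V}$ given on points by $\alpha \mapsto W^\alpha$; this produces the same $\widetilde H$ and is perhaps notationally cleaner.
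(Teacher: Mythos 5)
Your argument is correct and is essentially the paper's own proof: both apply $W^{(-)}$ to the categorical homotopy $H$ and use the canonical isomorphism $W^{U\times E}\cong (W^U)^E$ to exhibit the homotopy between $W^f$ and $W^g$. Your additional verification of the endpoint conditions is a careful elaboration of what the paper leaves implicit.
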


\begin{proof}
Let $H \colon U \times E \rightarrow V$ be a categorical homotopy between $f$ and $g$.  Then
\[ W^H \colon W^V \rightarrow W^{U \times E} \cong (W^U)^E \]
defines a categorical homotopy between $W^f$ and $W^g$.
\end{proof}

\begin{cor}  \label{internal}
If $U \rightarrow V$ is a categorical equivalence between Segal objects, then so is $W^V \rightarrow W^U$.
\end{cor}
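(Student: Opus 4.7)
The plan is to apply the contravariant internal-hom functor $W^{(-)}$ to the data witnessing $g \colon U \to V$ as a categorical equivalence, and invoke the immediately preceding proposition, which states that $W^{(-)}$ carries pairs of categorically homotopic maps to pairs of categorically homotopic maps. This proposition is the only real input needed; the proof reduces to a formal manipulation.

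More precisely, by definition there exist maps $f, h \colon V \to U$ together with categorical homotopies $gf \simeq \id_V$ and $fg \simeq \id_U$ (interpreting the definition literally, and using $h$ analogously if the intended reading is that $h$ provides a second one-sided inverse). Applying $W^{(-)}$, we obtain maps $W^g \colon W^V \to W^U$ and $W^f, W^h \colon W^U \to W^V$. Since $W^{(-)}$ is a (contravariant) functor, $W^f \circ W^g = W^{gf}$ and $W^g \circ W^f = W^{fg}$. By the preceding proposition these are categorically homotopic to $W^{\id_V} = \id_{W^V}$ and $W^{\id_U} = \id_{W^U}$ respectively. This exhibits $W^g$ as a categorical equivalence in the sense of the definition.

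The only detail worth checking before this formal argument goes through is that $W^V$ and $W^U$ are themselves Segal objects, so that the term ``categorical equivalence'' is applicable to the induced map. This is precisely what the cartesian property of the Segal object model structure provides: by Theorem \ref{ssthetansp} (or Proposition \ref{123'} in the case where $W$ is required to satisfy the stronger completeness condition (4$'$)), whenever $W$ is a Segal object and $X$ is any object, the exponential $W^X$ is again a Segal object, using that every object is cofibrant. With this ingredient secured, no further obstacle arises, and the corollary follows immediately from the preceding proposition applied twice.
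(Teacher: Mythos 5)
Your proof is correct and follows exactly the route the paper intends: the corollary is deduced by applying the immediately preceding proposition (that $W^{(-)}$ preserves categorical homotopies) to the homotopies $gf \simeq \id_V$ and $fg \simeq \id_U$, together with contravariant functoriality of $W^{(-)}$. Your extra check that $W^U$ and $W^V$ are again Segal objects, via the cartesian property, is a sensible point of care that the paper leaves implicit.
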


Recall that in our current setting the model structure $\acss$ agrees with the model structure of Theorem \ref{123'}, which is cartesian.

\begin{prop}
A categorical equivalence $f \colon U \rightarrow V$ between Segal objects is a weak equivalence in $\acss$.
\end{prop}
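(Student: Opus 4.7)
The plan is to verify that $f$ is a weak equivalence in the model structure of Proposition \ref{123'} by showing it induces weak equivalences on simplicial mapping spaces into every fibrant object. Since every object is cofibrant in the injective model structure, this is the standard characterization of weak equivalences in a left Bousfield localization.

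So fix a fibrant object $W$ in the Proposition \ref{123'} model structure, so $W$ is a Segal object satisfying (4'). Applying Corollary \ref{internal} to the categorical equivalence $f$, the induced map $W^V \to W^U$ is a categorical equivalence between Segal objects. Because the Proposition \ref{123'} model structure is cartesian and both $U$ and $V$ are cofibrant, the objects $W^U$ and $W^V$ are themselves fibrant there, hence are Segal objects satisfying (4'). Proposition \ref{catreedy} now applies to give that $W^V \to W^U$ is a levelwise weak equivalence in $\sPsh(\Delta \times \Theta_n)$. Since the simplicial mapping space $\Map(-, W)$ is obtained from the internal hom $W^{(-)}$ by evaluation at the appropriate representables, this levelwise weak equivalence produces a weak equivalence of simplicial sets $\Map(V, W) \to \Map(U, W)$, which is what was required.

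The linchpin of the argument is the cartesian property of the Proposition \ref{123'} model structure: this is precisely what ensures $W^U$ and $W^V$ inherit condition (4') from $W$, feeding the output of Corollary \ref{internal} into the hypothesis of Proposition \ref{catreedy}. Without passing to this particular localization, Corollary \ref{internal} would only yield a categorical equivalence between Segal objects, for which there is no direct route to a levelwise weak-equivalence conclusion; this is the main conceptual point of the proof rather than a genuine obstacle once the cartesian structure is in hand.
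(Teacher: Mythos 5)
Your proposal is correct and follows essentially the same route as the paper's proof: test against mapping spaces into fibrant objects $W$ satisfying (4'), use the cartesian structure to see that $W^U$ and $W^V$ are again fibrant, apply Corollary \ref{internal} and Proposition \ref{catreedy} to get a levelwise weak equivalence $W^V \rightarrow W^U$, and pass to $\Map(f,W)$. Your write-up just spells out a couple of steps the paper leaves implicit.
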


\begin{proof}
We want to show that $\Map(f, W)$ is a weak equivalence of simplicial sets for any Segal object $W$ satisfying (4').  Consider $W^f \colon W^V \rightarrow W^U$.  Since $W$ is a fibrant object in a cartesian model structure, so are both $W^V$ and $W^U$.  Therefore, $W^f$ is a categorical equivalence by Corollary \ref{internal} and therefore a levelwise weak equivalence by Proposition \ref{catreedy}.  But then passing to the mapping space $\Map(f, W)$ is still a weak equivalence, proving the proposition.
\end{proof}

\begin{cor} \label{catcss}
A categorical equivalence between Segal objects is a weak equivalence in $\acss(\Thetansp)$.
\end{cor}

\setcounter{subsection}{15}

\subsection{Dwyer-Kan equivalences}

The following result is the analogue of \cite[7.6]{rezk}.  Observe that so far in this section we have not used condition \eqref{essconst} for complete Segal space objects.  However, when we consider Dwyer-Kan equivalences, it is necessary to use this property.

\setcounter{theorem}{16}

\begin{prop} \label{dkreedy}
A map $f \colon U \rightarrow V$ between complete Segal objects is a Dwyer-Kan equivalence if and only if it is a levelwise weak equivalence.
\end{prop}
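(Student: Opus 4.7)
The plan is to mimic Rezk's proof of \cite[7.6]{rezk} for ordinary complete Segal spaces, with the added twist that the ``space'' direction involves objects in $\Thetansp$ rather than $\SSets$, which is why condition (\ref{essconst}) becomes important. The easy direction is that if $f$ is a levelwise weak equivalence, then the induced map on mapping objects $M_U^\Delta(x,y) \to M_V^\Delta(fx,fy)$ is a levelwise weak equivalence (as these are fibers of maps between fibrations that are levelwise weak equivalences), and the induced functor on $\Ho$ is essentially surjective because $f([0]_\Delta,[0]_\Theta)$ is a weak equivalence of simplicial sets and hence surjective on $\pi_0$.

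For the nontrivial direction, suppose $f$ is a Dwyer--Kan equivalence. Since $U$ and $V$ are $\mathcal{S}^\Delta$-local (Theorem \ref{completemc}), Proposition \ref{prop:we-local-objects-delta} reduces the problem to showing that $f([0],t)$ and $f([1],c)$ are weak equivalences of simplicial sets for every object $c$ of $\Theta_n$. I will handle $f([0],t)$ first, then use it to dispatch $f([1],c)$.

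For $f([0],t)$: by condition (\ref{complete0}), the completeness of $\tau_\Delta^*U$ gives a weak equivalence $U([0],t) \simeq (\tau_\Delta^*U)_{\heq}$, the subspace of the underlying $1$-simplices consisting of homotopy equivalences, and similarly for $V$. So it suffices to prove that $f_{\heq}\colon (\tau_\Delta^*U)_{\heq} \to (\tau_\Delta^*V)_{\heq}$ is a weak equivalence. Surjectivity on $\pi_0$ follows from essential surjectivity of $\Ho(U) \to \Ho(V)$ since $\pi_0$ of the space of homotopy equivalences agrees with isomorphism classes in the homotopy category. On each component, the fiber of $(\tau_\Delta^*U)_{\heq}$ over a pair $(x,y)$ is a union of components of $(\tau_\Delta^*M_U^\Delta(x,y))([0]_\Theta) = \diag \tau_\Theta^* M_U^\Delta(x,y)$, selected by the homotopy-equivalence condition, which is clearly preserved by $f$, so the mapping-object part of the DK hypothesis gives the required equivalence on fibers.

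For $f([1],c)$: by the Segal condition (2), the square
\[
\xymatrix{
U([1],c) \ar[r] \ar[d] & U([0],c)^{\times 2} \ar[d] \\
V([1],c) \ar[r] & V([0],c)^{\times 2}
}
\]
is a map of homotopy fibrations, whose fibers are $M_U^\Delta(x,y)(c) \to M_V^\Delta(fx,fy)(c)$. The DK hypothesis gives a weak equivalence in $\Thetansp$ between mapping objects, which, because mapping objects are $\mathcal{S}$-local by condition (\ref{recursive}), is levelwise and so a weak equivalence at $c$. The map on bases $U([0],c)^{\times 2} \to V([0],c)^{\times 2}$ is a weak equivalence since, by condition (\ref{essconst}), $U([0],t) \to U([0],c)$ and $V([0],t) \to V([0],c)$ are weak equivalences, and we have just shown $f([0],t)$ is. Comparing the homotopy fiber sequences yields the required weak equivalence. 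The main obstacle here, as for ordinary complete Segal spaces, is the careful identification of the components of the space of homotopy equivalences and the verification that $\pi_0$ really does compute isomorphism classes in $\Ho(U)$; condition (\ref{essconst}) is precisely what allows the argument to pass between the underlying simplicial space $\tau_\Delta^*U$ (where completeness lives) and the full structure $U([-],c)$ (where the Segal fiber sequence lives).
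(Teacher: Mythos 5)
Your overall architecture matches the paper's: the easy direction is as you describe, and the hard direction reduces (via Proposition \ref{prop:we-local-objects-delta}, equivalently the Segal condition) to showing $f$ is a weak equivalence in degrees $([0],t)$ and $([1],c)$. Your treatment of $f([1],c)$ --- comparing the fiber sequences $M^\Delta_U(x,y)(c)\rightarrow U([1],c)\rightarrow U([0],c)^{\times 2}$ and using condition (\ref{essconst}) together with the already-established equivalence on $([0],t)$ to handle the base --- is exactly the paper's second step.

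The gap is in your argument for $f([0],t)$. You propose to conclude that $(\tau_\Delta^*U)_{\heq}\rightarrow(\tau_\Delta^*V)_{\heq}$ is a weak equivalence from (a) surjectivity on $\pi_0$ and (b) the induced weak equivalences on fibers $M^\Delta_U(x,y)_{\heq}\rightarrow M^\Delta_V(fx,fy)_{\heq}$. But those are the fibers of the fibrations $U_{\heq}\rightarrow U_0\times U_0$ and $V_{\heq}\rightarrow V_0\times V_0$, and a fiberwise equivalence gives an equivalence of total spaces only once the map of bases is known to be an equivalence --- which, after the completeness identification $U_0\simeq U_{\heq}$, is precisely the statement you are trying to prove. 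As written, the step is circular. What is actually needed (and what the paper does, following \cite[7.6]{rezk}) is: use the fiberwise equivalences to conclude that the square comparing $U_{\heq}\rightarrow U_0\times U_0$ with $V_{\heq}\rightarrow V_0\times V_0$ is a homotopy pullback; use completeness to replace $U_{\heq}$ and $V_{\heq}$ by $U_0$ and $V_0$, so that the horizontal maps become diagonals; and then observe that if that diagonal square is a homotopy pullback, each homotopy fiber $F$ of $U_0\rightarrow V_0$ satisfies $F\simeq F\times F$ and hence is empty or weakly contractible, with emptiness ruled out by the essential surjectivity of $\Ho(U)\rightarrow\Ho(V)$. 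All of your ingredients (completeness, essential surjectivity, DK on mapping objects, and condition (\ref{essconst}) to regard $U_0$ and $V_0$ as spaces) are the right ones, but this homotopy-pullback/diagonal argument is the real content of the degree-zero step and is missing from your proposal.
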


\begin{proof}
A levelwise weak equivalence is always a Dwyer-Kan equivalence, so we need only prove the reverse implication.  Suppose that $U$ and $V$ are complete Segal objects and that $f \colon U \rightarrow V$ is a Dwyer-Kan equivalence.  Then for any $x,y \in U([0]_\Delta,[0]_\Theta)$, we have that $M_U^\Delta(x,y) \rightarrow M_V^\Delta(fx,fy)$ is a weak equivalence in $\Thetansp$, and that $\Ho(U) \rightarrow \Ho(V)$ is an equivalence of categories.

Recall that $M_U^\Delta(x,y)$ can be written as a (homotopy) pullback
\[ \xymatrix{M_U^{\Delta}(x,y) \ar[r] \ar[d] & U_1 \ar[d] \\
\{(x,y)\} \ar[r] & U_0 \times U_0} \]
and that $U_{\heq}$ consists of components of $U_1$.  Therefore, we can define the mapping object of equivalences $M^\Delta_U(x,y)_{\heq}$ by restricting $U_1$ to $U_{\heq}$ and taking the pullback
\[ \xymatrix{M^\Delta_U(x,y)_{\heq} \ar[r] \ar[d] & U_{\heq} \ar[d] \\
\{(x,y)\} \ar[r] & U_0 \times U_0.} \]
We obtain that for any pair of objects $(x,y)$, $M^\Delta_U(x,y)_{\heq} \rightarrow M_V^\Delta(fx,fy)_{\heq}$ is a weak equivalence in $\Thetansp$.

By precomposing with the degeneracy map $s_0 \colon U_0 \rightarrow U_{\heq}$, which is an equivalence, we obtain a commutative diagram
\[ \xymatrix{U_0 \ar[r] \ar[d] & U_0 \times U_0 \ar[d] \\
V_0 \ar[r] & V_0 \times V_0} \]
which is a homotopy pullback diagram since taking the horizontal fibers results in a weak equivalence $M^\Delta_U(x,y)_{\heq} \rightarrow M^\Delta_V(fx,fy)_{\heq}$.  Because we have assumed that $U$ and $V$ are complete Segal objects, so that $U_0$ and $V_0$ are essentially constant when evaluating at objects of $\Theta_n$, it follows that the map $U_0 \rightarrow V_0$ is a weak equivalence.  Then $U_1 \rightarrow V_1$ must also be a weak equivalence, since
\[ \xymatrix{U_1 \ar[r] \ar[d] & U_0 \times U_0 \ar[d] \\
V_1 \ar[r] & V_0 \times V_0} \]
is a pullback diagram with horizontal maps fibrations, since $U$ and $V$ are Reedy fibrant, and therefore a homotopy pullback diagram.  Using the Segal condition, we have established that $f$ is a Reedy weak equivalence.
\end{proof}

However, we need the following stronger result.

\begin{theorem}  \label{dkcss}
A map $f \colon U \rightarrow V$ of Segal objects is a Dwyer-Kan equivalence if and only if it is a weak equivalence in the model category $\css(\Thetansp)$.
\end{theorem}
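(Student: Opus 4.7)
The plan is to reduce to Proposition~\ref{dkreedy} via a fibrant replacement argument, in direct analogy with Rezk's proof for complete Segal spaces. The crux is the following main lemma: \emph{for any Segal object $U$, the unit map $\eta_U \colon U \to \hat{U}$ of fibrant replacement in $\css(\Thetansp)$ is a Dwyer--Kan equivalence.}

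Granted the main lemma, the theorem follows from a routine two-out-of-three argument. Given $f \colon U \to V$, form the commutative square
\[ \xymatrix{ U \ar[r]^f \ar[d]_{\eta_U} & V \ar[d]^{\eta_V} \\ \hat{U} \ar[r]^{\hat{f}} & \hat{V} } \]
whose bottom row is a map of complete Segal objects. Dwyer--Kan equivalences satisfy two-out-of-three, being detected by weak equivalences of mapping objects together with equivalences of homotopy categories. Hence $f$ is a DK equivalence if and only if $\hat{f}$ is, and by Proposition~\ref{dkreedy}, $\hat{f}$ is a DK equivalence if and only if it is a levelwise weak equivalence. By the standard characterization of weak equivalences in a left Bousfield localization via fibrant replacement, this is equivalent to $f$ being a weak equivalence in $\css(\Thetansp)$.

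To prove the main lemma, I would factor $\eta_U$ as $U \to U' \to \hat{U}$, where $U \to U'$ is fibrant replacement in the model structure of Proposition~\ref{almostcomplete} (fibrant objects satisfy conditions (1)--(4)), and $U' \to \hat{U}$ further enforces the collapse condition (5). The first stage affects $U$ only in the $t$-slice $\tau_\Delta^* U = U(-, t)$, transforming its underlying simplicial space into a complete Segal space; this is precisely Rezk's classifying-diagram completion applied along that slice. As in the classical case, the construction identifies objects of $U$ along the equivalence relation generated by homotopy equivalences, induces a bijection on isomorphism classes of $\Ho(U)$, and preserves the mapping objects $M_U^\Delta(x, y)$ in $\sPsh(\Theta_n)$ up to weak equivalence. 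The second stage adjoins the collapse maps, forcing $U'([0], t) \to U'([0], c)$ to become a weak equivalence; since mapping objects are defined as fibers over basepoints in $U'([0], t)$, this stage alters mapping objects only up to levelwise weak equivalence and leaves the homotopy category unchanged.

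The main obstacle is verifying that the first-stage completion, carried out inside $\sPsh(\Delta \times \Theta_n)$, actually yields a Dwyer--Kan equivalence when mapping objects are valued in $\Theta_n$-spaces rather than simplicial sets. One approach is to perform Rezk's completion fiberwise over $\Theta_n$, using that $U(-, \theta)$ is already a Segal space for each $\theta$, and then reassemble while checking naturality in $\Theta_n$. A more abstract alternative is to inspect the generating acyclic cofibrations of the localization and verify that each attaches only cells corresponding to maps already homotopy-invertible in the mapping-object structure of $U$, so that both $M^\Delta$ and $\Ho$ are preserved up to the appropriate equivalence.
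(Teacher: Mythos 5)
Your reduction is exactly the paper's: both arguments hinge on a completion map $U \to \widehat{U}$ that is simultaneously a weak equivalence in $\css(\Thetansp)$ and a Dwyer--Kan equivalence, followed by two-out-of-three and Proposition~\ref{dkreedy}. That part is fine. The gap is that your ``main lemma'' is precisely the hard content (the paper's Lemma~\ref{completion}), and your sketch of it does not close. You take $\widehat{U}$ to be an abstract fibrant replacement in the localization and assert that the first stage ``affects $U$ only in the $t$-slice'' and ``preserves the mapping objects $M_U^\Delta(x,y)$ up to weak equivalence.'' The localizing maps in $\Cpt^\Delta$ do live in the $t$-slice, but localizing with respect to them changes the object in every simplicial degree and at every object of $\Theta_n$, and an abstract fibrant replacement gives you no handle on what happens to the fibers of $W_1 \to W_0 \times W_0$. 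This is exactly the point you flag as ``the main obstacle,'' and neither of your two suggested remedies is carried out: a fiberwise-over-$\Theta_n$ completion would not obviously reassemble into a single Reedy fibrant object with the right mapping objects, and the generating acyclic cofibrations of a left Bousfield localization are not explicit enough to run a cell-attachment argument.

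What the paper does instead is build the completion concretely as $\widehat{W} = \diag_\Delta\bigl([m] \mapsto W^{E(m)}\bigr) = \hocolim_\Delta\bigl([m]\mapsto W^{E(m)}\bigr)$, so that (a) each map $W \to W^{E(m)}$ is a categorical equivalence, hence a weak equivalence in the localization by the machinery of Propositions~\ref{catsimpequiv}--\ref{catcss}, making $W \to \widehat{W}$ a weak equivalence of homotopy colimits; and (b) the Segal condition makes the squares comparing $W_q$ and $W_p$ over $W_0^{q+1}$ and $W_0^{p+1}$ homotopy pullbacks, so Lemma~\ref{hpb} (homotopy pullbacks over $\cosk_0(W_0)$ commute with $\hocolim_\Delta$) shows the mapping objects are preserved, giving the Dwyer--Kan property. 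A further pullback against $\cosk_0(\widehat{W}([0]_\Delta,[0]_\Theta))$ is then needed to restore condition~(5), which your two-stage factorization orders the other way around. Without an explicit model for the completion and something playing the role of Lemma~\ref{hpb}, the claim that fibrant replacement is a Dwyer--Kan equivalence remains unproven, and the whole theorem rests on it.
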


The heart of the proof is in the following method for completing a Segal object.

\begin{lemma} \label{completion}
Given any Segal object $W$, there exists a completion map $i_W \colon W \rightarrow \widehat W^d$ such that:
\begin{enumerate}
\item the completion $\widehat W^d$ is a complete Segal object;

\item the map $i_W$ is a weak equivalence in $\css(\Thetansp)$; and

\item the map $i_W$ is a Dwyer-Kan equivalence.
\end{enumerate}
\end{lemma}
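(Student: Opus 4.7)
The strategy is to adapt Rezk's construction of the completion of a Segal space \cite{rezk} to this two-variable setting. Concretely, I would define $i_W \colon W \to \widehat W$ as a functorial fibrant replacement in $\css(\Thetansp)$ carried out in two stages, arranged so that the Dwyer-Kan property is visibly preserved throughout.

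In the first stage, apply a functorial fibrant replacement in the cartesian model structure of Proposition \ref{123'} to obtain $W \to W^{(1)}$, with $W^{(1)}$ a Segal object satisfying conditions (1)--(3) and (4'). Since condition (4') specializes at $c = t$ to condition (4), the object $W^{(1)}$ already has complete Segal underlying simplicial space. In the second stage, apply a further small-object argument localizing along the collapse maps $\Coll^\Delta$ to obtain $W^{(1)} \to \widehat W$, where $\widehat W$ additionally satisfies condition (5). The resulting $\widehat W$ is then a complete Segal object, and $i_W$ is by construction a weak equivalence in $\css(\Thetansp)$, establishing items (1) and (2) of the lemma.

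The main obstacle is item (3), showing that $i_W$ is a Dwyer-Kan equivalence. Each stage potentially enlarges $W([0]_\Delta, [0]_\Theta)$: the first by points corresponding to homotopy equivalences in $W(-, c)$ for various $c$, the second by identifying $W([0]_\Delta, c)$ with $W([0]_\Delta, t)$. I would verify the Dwyer-Kan property by analyzing pushouts along the generating maps individually, following the strategy used by Rezk in the single-variable case. For the fiberwise completion step, every new component of $W^{(1)}([0]_\Delta, [0]_\Theta)$ is traceable via the analysis of Subsection 8.1 to a homotopy equivalence in $W$, and hence gets identified with an existing object in $\Ho(W^{(1)})$; mapping objects are preserved because the localization restricts, fiberwise in the $\Theta_n$-direction, to Rezk's ordinary Segal space completion. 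For the collapse step, the key input is that $W^{(1)}$ already satisfies (4'), which ensures the collapse maps $W^{(1)}([0]_\Delta, c) \to W^{(1)}([0]_\Delta, t)$ are compatible with the mapping object structure and do not alter mapping objects indexed by pairs in $W^{(1)}([0]_\Delta, t)$.

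Finally, having built $i_W$ so that the induced map on mapping objects $M_W^\Delta(x, y) \to M_{\widehat W}^\Delta(i_W x, i_W y)$ is a weak equivalence for all $x, y \in W([0]_\Delta, [0]_\Theta)$, and having shown that every object of $\widehat W$ is categorically equivalent to the image of some object of $W$, the Dwyer-Kan equivalence follows directly from the definition. This can then be combined with Proposition \ref{dkreedy} to cross-check the result against the characterization of weak equivalences between complete Segal objects as levelwise weak equivalences.
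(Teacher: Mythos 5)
There is a genuine gap at item (3), and it is located exactly where you acknowledge the ``main obstacle'' to be. Your $\widehat W$ is produced by an abstract (small-object-argument) fibrant replacement, but an abstract fibrant replacement gives you no handle on what happens to $W([0]_\Delta,[0]_\Theta)$ or to the mapping objects: the generating acyclic cofibrations of a left Bousfield localization are not the localizing maps themselves but are extracted from them by a horn/box-product construction, and ``analyzing pushouts along the generating maps individually'' is not the strategy Rezk uses --- Rezk avoids precisely this by writing the completion down explicitly. Your claim that the localization ``restricts, fiberwise in the $\Theta_n$-direction, to Rezk's ordinary Segal space completion'' is unjustified for an arbitrary fibrant replacement. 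Worse, the argument is in danger of circularity: Lemma \ref{completion} is the engine of the proof of Theorem \ref{dkcss} (Dwyer--Kan equivalences of Segal objects coincide with $\css(\Thetansp)$-weak equivalences), so you cannot infer the Dwyer--Kan property of $i_W$ from its being a local equivalence without already knowing the theorem. The paper instead takes $\widehat W = \diag_\Delta\bigl([m]\mapsto W^{E(m)}\bigr)$, where $E(m)$ is the nerve of the contractible groupoid on $m+1$ objects; each $W\to W^{E(m)}$ is a categorical equivalence (Proposition \ref{catcss}), and the mapping objects are controlled by applying Lemma \ref{hpb} to the map $W\to\cosk_0(W_0)$, which shows that $W_k\to\widehat W_k$ is a homotopy pullback over $W_0^{k+1}\to\widehat W_0^{k+1}$ and hence induces equivalences on mapping objects. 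Condition (5) is then arranged by an explicit pullback against coskeleta ($\widehat W^d$), again keeping the mapping objects visibly unchanged.

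A secondary issue: your first stage is a fibrant replacement in the model structure of Proposition \ref{123'} (localizing at $F[0]\times Fc\to E\times Fc$), but $\css(\Thetansp)$ is a different localization of the Segal object structure (at $\Cpt^\Delta\cup\Coll^\Delta$), and the paper explicitly defers the comparison of condition (4') with conditions (4)--(5) to future work (Remark \ref{4vs4'}). So ``$i_W$ is by construction a weak equivalence in $\css(\Thetansp)$'' does not follow for the first stage without an additional argument that $\{F[0]\times Fc\to E\times Fc\}$-local equivalences lie in $\overline{\mathcal S^\Delta}$.
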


\begin{proof}
Generalizing the discrete object $E$ above, let $E(k)$ denote the nerve of the category with $(k+1)$ objects and a single isomorphism between any two of them, regarded as a discrete simplicial object in $\Thetanop$.

Since $W$ is a Segal object, if we fix an object of $\Thetanop$, we have a Segal space in the ordinary sense to which we can apply completion.  Therefore, we simply define the completion of $W$ to be given by the levelwise completion of Segal spaces to complete Segal spaces as defined in \cite[\S 14]{rezk}.   More precisely, define the completion
\[ \begin{aligned}
\widehat W & = \diag_\Delta \left([m] \mapsto W^{E(m)} \right) \\
& = \hocolim_\Delta \left( [m] \mapsto W^{E(m)} \right).
\end{aligned}\]

Now observe that $W$ can be regarded as a homotopy colimit of the constant diagram given by $W$ itself.  We know that the unique map $E(m) \rightarrow \Delta[0]$ induces a categorical equivalence $W \rightarrow W^{E(m)}$ which, by Proposition \ref{catcss} is a weak equivalence in the model structure $\acss(\Thetansp)$, hence also in $\css(\Thetansp)$.  The map $W \rightarrow \widehat W$ is given by the induced map on homotopy colimits and is hence a weak equivalence.   The fact that $\widehat{W}$ is a fibrant object in $\acss(\Thetansp)$ follows from the fact that we defined it as a levelwise completion, so it satisfies the required conditions, i.e., \eqref{injfib}-\eqref{complete0} Definition \ref{cssobj}, and hence also (4') from Remark \ref{4vs4'}.

Next, we show that the map $W \rightarrow \widehat W$ is a Dwyer-Kan equivalence.  By construction, it is essentially surjective, in that it induces an essentially surjective map on homotopy categories, so we need only check the condition on mapping objects.  Consider the 0-coskeleton $\cosk_0(W_0)$ which has as $k$-simplices the object $W_0^{k+1}$. We can apply Lemma \ref{hpb} to the map $W \rightarrow \cosk_0(W_0)$ and the homotopy pullback diagrams
\[ \xymatrix{W_q \ar[r] \ar[d] & W_p \ar[d] \\
W_0^{q+1} \ar[r] & W_0^{p+1}} \]
ranging over all $[p] \rightarrow [q]$ in $\Delta$ to conclude that
\[ \xymatrix{W_k \ar[r] \ar[d] & \widehat W_k \ar[d] \\
W_0^{k+1} \ar[r] & \widehat W_0^{k+1}} \]
is a homotopy pullback square.  The homotopy fibers of the vertical maps are precisely the mapping objects of $W$ and $\widehat W$, respectively, which are hence weakly equivalent.  The case where $k=2$ gives the required condition for a Dwyer-Kan equivalence.

We now modify $\widehat W$ so that it satisfies condition \eqref{essconst}.  Consider the $\Theta_n$-space $\widehat W_0$, and the simplicial object $\cosk_0(\widehat W_0)$, equipped with a natural map $\widehat W \rightarrow \cosk_0(\widehat W_0)$.  Similarly, think of $\widehat W([0]_\Delta, [0]_\Theta)$ as a constant $\Theta_n$-space and take its coskeleton $\cosk_0(\widehat W([0]_\Delta, [0]_\Theta))$.  Define $\widehat W^d$ to be the pullback of the diagram
\[ \xymatrix{\widehat W^d \ar[r] \ar[d] & \cosk_0(\widehat W([0]_\Delta, [0]_\Theta)) \ar[d] \\
\widehat W \ar[r] & \cosk_0(\widehat W_0).} \]
Then $\widehat{W}^d$ is still injective fibrant, and it still satisfies condition \eqref{segal}.  Looking at degree zero, we obtain that $\widehat{W}^d_0 =  \widehat{W}([0]_\Delta,[0]_\Theta)$.  In higher degrees, we get that
\[ \widehat{W}^d_n \simeq \coprod_{x,y \in \widehat{W}([0]_\Delta ,[0]_\Theta)} M^\Delta_{\widehat{W}}(x,y). \]
In particular, 
\[ M^\Delta_{\widehat{W}^d}(x,y) = M^\Delta_{\widehat{W}}(x,y) \] 
for all $x,y \in \widehat{W}([0]_\Delta,[0]_\Theta)$, so $\widehat{W}^d$ satisfies \eqref{localmapping}.  Then one can check that condition \eqref{complete0} is also unchanged by taking this construction.    By design, $\widehat{W}^d$ additionally satisfies condition \eqref{essconst}, and it comes with a natural completion map $i_W \colon W \rightarrow \widehat W^d$, given by the composite $W \rightarrow \widehat{W} \rightarrow \widehat{W}^d$.  

Since we have already seen that the first map $W \rightarrow \widehat{W}$ is a Dwyer-Kan equivalence, it suffices to prove that the second map $\widehat{W} \rightarrow \widehat{W}^d$ is also.  We have just shown that this map is the identity on mapping objects, and since it is also the identity on objects, it must be a Dwyer-Kan equivalence.  

Finally, we need to know that $i_W$ is a weak equivalence in $\css(\Thetansp)$.  We know already that $W \rightarrow \widehat{W}$ is a weak equivalence in $\acss(\Thetansp)$, hence also in $\css(\Thetansp)$, so it suffices to check the map $\widehat{W} \rightarrow \widehat{W}^d$.  Let $Z$ be a complete Segal object in $\Thetansp$, and consider the map induced map $\Map(\widehat{W}^d, Z) \rightarrow \Map(\widehat{W}, Z)$.  However, since $Z$ itself satisfies condition \eqref{essconst}, any map $\widetilde{W} \rightarrow Z$ is determined by a map $\widehat{W}^d \rightarrow Z$, so the map on mapping spaces is a weak equivalence, as we needed to show.
\end{proof}

The following lemma, which we have used in the above proof, is essentially given by a descent argument.

\begin{lemma} \label{hpb}
Let $X \rightarrow Y$ be a map of simplicial objects in $\Thetansp$ such that, for every map $[p] \rightarrow [q]$ in $\Delta$, the induced diagram
\[ \xymatrix{X_q \ar[r] \ar[d] & X_p \ar[d] \\
Y_q \ar[r] & Y_p} \]
is a levelwise homotopy pullback square.  Then the diagram
\[ \xymatrix{X_0 \ar[r] \ar[d] & \hocolim_\Delta X \ar[d] \\
Y_0 \ar[r] & \hocolim_\Delta Y} \]
is also a levelwise homotopy pullback square.
\end{lemma}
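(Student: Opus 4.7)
The plan is to mirror the classical result for simplicial spaces (cf.\ \cite{rezk}), working one level deeper because our ``spaces'' are $\Theta_n$-spaces. First I would reduce to a statement about bisimplicial sets: homotopy colimits in $\Thetansp$, and homotopy pullbacks between fibrant objects, are computed in the underlying injective model structure on $\sPsh(\Theta_n)$, where both are detected object-wise on $\Theta_n$. So we may fix $\theta \in \Theta_n$ and evaluate at $\theta$, reducing the claim to the corresponding statement for maps of simplicial objects in $\SSets$.

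Next I would functorially Reedy-cofibrantly replace the map $X_\bullet \rightarrow Y_\bullet$ of simplicial diagrams in $\SSets$. For Reedy cofibrant simplicial diagrams the homotopy colimit $\hocolim_\Delta$ is represented by the diagonal (geometric realization), and the hypothesis that each simplicial face-map square is a homotopy pullback is preserved by this replacement. The remaining task is then to show that the realization of a level-wise homotopy-pullback square of (Reedy fibrant) simplicial spaces is itself a homotopy pullback square in which the ``$0$-th level fiber'' on each side is recovered.

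The core step is an appeal to the Bousfield--Friedlander theorem: under a $\pi_\ast$-Kan condition on the base $Y_\bullet$, level-wise homotopy pullback squares of simplicial spaces realize to homotopy pullback squares. The $\pi_\ast$-Kan condition on $Y_\bullet$ follows from our cartesian hypothesis, since it forces the simplicial structure on $\pi_\ast(Y_\bullet)$ over $Y_0$ to be group-like. The main obstacle is verifying this $\pi_\ast$-Kan condition carefully; an alternative that bypasses Bousfield--Friedlander is to build $\hocolim_\Delta Y$ along its skeletal filtration and iteratively apply the pasting law for homotopy pullbacks, using that each skeletal attachment is a homotopy pushout along a cofibration together with the inherited cartesian hypothesis at each stage.
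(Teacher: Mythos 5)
The paper states this lemma without proof, so there is no argument of the authors' to measure yours against; I can only assess the proposal on its own terms. Your setup is fine: evaluating at each object of $\Theta_n$ reduces everything to bisimplicial sets, where $\hocolim_\Delta$ is the diagonal, and the statement you are left with is true---it is the descent (realization-fibration) property of spaces applied to the cartesian natural transformation $X \rightarrow Y$ of $\Deltaop$-diagrams.

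The genuine gap is in your main route. The $\pi_*$-Kan condition on $Y_\bullet$ does \emph{not} follow from the hypothesis: the hypothesis constrains only the \emph{map} $X \rightarrow Y$ and imposes no restriction on $Y$ alone---taking $X = Y$ and $f = \id$ satisfies it for a completely arbitrary simplicial object $Y$. So the claim that the cartesian hypothesis ``forces the simplicial structure on $\pi_*(Y_\bullet)$ over $Y_0$ to be group-like'' is false, and Bousfield--Friedlander cannot be invoked as described. (What the hypothesis does give you is that the levelwise homotopy fibers of $X \rightarrow Y$ form an essentially constant simplicial object, which is the \emph{other} input a BF-style argument needs; the $\pi_*$-Kan condition on the base is an honest additional hypothesis there.) Your fallback---induct over the skeletal filtration of the realization and paste homotopy pullbacks---is the correct strategy, being essentially the standard proof that presheaves of spaces satisfy descent, but as written it is only a gesture: you must check that cartesianness over all operators of $\Delta$ implies that the latching-object squares $L_nX \rightarrow X_n$ over $L_nY \rightarrow Y_n$ are homotopy cartesian (the latching objects are themselves colimits over the degeneracy categories, so this requires a subinduction), and the ``pasting law'' you invoke has to be the gluing/Mather cube property for homotopy pullbacks over homotopy pushouts, which should be proved or cited. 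Either carry that induction out in detail or appeal directly to descent for the $\infty$-topos of simplicial presheaves; as it stands neither branch of your argument is complete, and the primary branch rests on a false implication.
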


We are now able to prove the main result of this section.

\begin{proof}[Proof of Theorem \ref{dkcss}]
Let $f \colon U \rightarrow V$ be a map of Segal objects.  Using the completion functor from Lemma \ref{completion}, we have that $f$ is a weak equivalence in $\css(\Thetansp)$ if and only if $\widehat f \colon \widehat{U}^d \rightarrow \widehat{V}^d$ is a weak equivalence.   But $\widehat f$ is a map of complete Segal space objects, which are the fibrant objects in a localized model structure, so $\widehat f$ is a weak equivalence if and only if it is a levelwise weak equivalence.  But then by Proposition \ref{dkreedy}, $\widehat f$ is a levelwise weak equivalence if and only if $\widehat f$ is a Dwyer-Kan equivalence.  Finally, since Dwyer-Kan equivalences satisfy the 2-out-of-3 property we also know that $f$ is a Dwyer-Kan equivalence if and only if $\widehat f$ is.   Therefore, $f$ is a weak equivalence if and only if it is a Dwyer-Kan equivalence.
\end{proof}

\section{Equivalence between Segal category and complete Segal space objects in $\Thetansp$}

With the results of the last section, we are now able to give the Quillen equivalence between the model structures for Segal category objects and complete Segal objects.  We begin by recalling the model structure for Segal category objects from \cite{part1}.

\begin{definition}
A \emph{Segal category object in} $\Thetansp$ is a Segal object $X$ in $\Thetansp$ such that $X_0$ is discrete.
\end{definition}

\begin{theorem} \cite[6.9]{part1}
There is a cofibrantly generated model structure $\Secat(\Thetansp)$ on the category of functors $X \colon \Deltaop \rightarrow \Thetansp$ with $X_0$ discrete, with structure defined as follows.
\begin{itemize}
\item Weak equivalences are the maps $f \colon X \rightarrow Y$ such that the induced map $LX \rightarrow LY$ is a Dwyer-Kan equivalence of Segal space objects.

\item Cofibrations are the monomorphisms.

\item Fibrations are the maps with the right lifting property with respect to the maps which are both cofibrations and weak equivalences.

\item Fibrant objects are precisely the Segal category objects.
\end{itemize}
\end{theorem}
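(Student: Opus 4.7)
Since this theorem is recalled from \cite[6.9]{part1}, the plan is to sketch how the model structure is constructed there. The approach follows Bergner's strategy for the classical Segal category model structure in the $(\infty,1)$-setting, transferred to the $\Thetansp$-enriched context. The ambient category is the full subcategory $\Secat(\Thetansp) \subset \sPsh(\Delta \times \Theta_n)$ of functors $X \colon \Deltaop \to \Thetansp$ with $X_0$ a discrete object; one first checks this inclusion admits a left adjoint (obtained by $\pi_0$ in degree zero together with a suitable pushout/collapse of the higher degrees) so that generating (acyclic) cofibrations may be transferred from the projective model structure on simplicial $\Thetansp$-diagrams.

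The next step is to identify the weak equivalences intrinsically. One constructs a \emph{Segalification} functor $L$ that fibrantly replaces objects of $\Secat(\Thetansp)$ in such a way that $LX$ is a Segal object (still with $LX_0$ discrete), for example by small-object argument against the Segal maps of Theorem \ref{ssthetansp} restricted to the discrete-degree-zero setting. One then declares $f$ a weak equivalence iff $Lf$ is a Dwyer--Kan equivalence of Segal objects. That this matches the naive localization notion of weak equivalence follows because, by Theorem \ref{dkcss}, Dwyer--Kan equivalences between Segal objects coincide with weak equivalences in $\css(\Thetansp)$ after further completing; thus the homotopical content is unchanged.

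Cofibrations are declared to be monomorphisms in $\Secat(\Thetansp)$, and fibrations are forced by the right lifting property. The main technical obstacle is verifying the factorization and lifting axioms: one must run a small-object argument that produces cofibration/fibration factorizations without destroying the discreteness of $X_0$. This is handled by choosing generating (acyclic) cofibrations to themselves be maps between objects with discrete $0$-space, so that pushouts along monomorphisms stay within $\Secat(\Thetansp)$. The verification that Segalification-acyclic cofibrations generate the weak equivalences, combined with the Segal maps being among them, gives precisely that fibrant objects are injective-fibrant Segal category objects.

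The key obstacle to watch is therefore the construction of $L$ and the proof that the generating acyclic cofibrations suffice: one needs that a map with the right lifting property against the generating acyclic cofibrations is both a levelwise fibration of the underlying simplicial $\Thetansp$-diagrams and a Dwyer--Kan equivalence preserver. This is precisely where the Segal condition at the fibrant level is enforced, and where Theorem \ref{dkcss} is used to turn the Dwyer--Kan characterization of weak equivalences into a property detectable by lifting.
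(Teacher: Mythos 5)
The paper does not prove this statement: it is recalled verbatim from \cite[6.9]{part1}, so there is no internal proof here to compare against. Judged as a reconstruction of the argument in \cite{part1}, your sketch has the right overall shape (a localization functor $L$ landing in Segal objects with discrete degree-zero part, weak equivalences defined as the maps whose image under $L$ is a Dwyer--Kan equivalence, cofibrations the monomorphisms, fibrations by lifting), but the mechanism you propose for producing the factorizations would not yield the stated structure. A model structure transferred along an adjunction from a \emph{projective} structure has its weak equivalences and fibrations created by the right adjoint --- in particular the weak equivalences would be levelwise rather than Dwyer--Kan --- and its cofibrations would be retracts of relative cell complexes on the transferred generators, which are far from being all monomorphisms. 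The structure asserted here is the injective-type one ($\Secat_c$ in Bergner's terminology): its generating (acyclic) cofibrations must be built by hand as \emph{reduced} versions of the generating injective cofibrations, chosen so that domains and codomains have discrete degree-zero object, and one must then prove separately that the cofibrations so generated coincide with the monomorphisms. Similarly, $L$ cannot be obtained by naively running the small object argument against the Segal maps $G[m]\times Fc \rightarrow F[m]\times Fc$, since pushouts along these destroy discreteness in degree zero; the maps must first be modified. Your closing remarks about choosing generators with discrete degree-zero objects are closer to what actually happens, but they contradict the transfer mechanism you lead with, and the two genuinely hard verifications (generated cofibrations $=$ monomorphisms; RLP against the acyclic generators characterizes the fibrations) are exactly what is elided.

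A secondary issue: your appeal to Theorem \ref{dkcss} to justify the choice of weak equivalences is logically backwards relative to the source. The model structure of \cite[6.9]{part1} is established prior to, and independently of, the completion results of Section 8 of the present paper; Theorem \ref{dkcss} is proved here precisely in order to compare the already-existing $\Secat(\Thetansp)$ with $\css(\Thetansp)$, and cannot be an ingredient in constructing $\Secat(\Thetansp)$ itself.
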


Recall that the underlying objects of the model category for complete Segal space objects in $\Thetansp$ are functors $\Deltaop \rightarrow \Thetansp$, whereas the underlying objects of the model category for Segal category objects are those functors $\Deltaop \rightarrow \Thetansp$ which take the object $[0]$ of $\Deltaop$ to a discrete object in $\Thetansp$, i.e., just a set.  Therefore, we have an inclusion functor
\[ I \colon \Secat(\Thetansp) \rightarrow \css(\Thetansp). \]   We aim to describe a right adjoint functor to $I$ and show that this adjoint pair defines a Quillen equivalence of model categories.

Let $W$ be an object of $\css(\Thetansp)$, thought of as a functor $\Deltaop \rightarrow \Thetansp$.   Consider the functor $U = \cosk_0(W_0)$.  Thinking instead of $W$ as a functor $\Deltaop \times \Thetanop \times \Deltaop \rightarrow \Sets$, define $V=\cosk_0(W([0]_\Delta, [0]_\Theta)_0)$.

Using the natural map $W \rightarrow U$ and the inclusion $V \rightarrow W$, define $RW$ to be the pullback
\[ \xymatrix{RW \ar[r] \ar[d] & V \ar[d] \\
W \ar[r] & U. } \]
Since $W$ is assumed to be Reedy fibrant, the map $W \rightarrow U$ is a fibration and therefore $RW$ is actually a homotopy pullback.  The diagrams $U$ and $V$ are still Segal spaces (and in particular Reedy fibrant) but in general they are no longer complete.

Observe that $RW$ has discrete $\Theta_n$-space in degree 0.  Therefore, we have defined a functor
\[ R \colon \css(\Thetansp) \rightarrow \Secat(\Thetansp). \]

\begin{lemma}
If $W$ is a complete Segal object, then $RW$ is a Segal category object.
\end{lemma}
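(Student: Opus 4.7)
The plan is to verify the two defining properties of a Segal category object directly from the pullback construction: (a) $(RW)_0$ is a discrete $\Theta_n$-space, and (b) $RW$ is a Segal object in $\Thetansp$, i.e., it is Reedy fibrant, satisfies the Segal condition, and its mapping objects are $\mathcal S$-local. The guiding idea is that the two auxiliary objects $U=\cosk_0(W_0)$ and $V=\cosk_0(W([0]_\Delta,[0]_\Theta)_0)$ are $0$-coskeleta, so they trivially satisfy the Segal condition and their mapping objects are terminal; hence each property for $RW$ pulls back from the corresponding property for $W$.

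For (a), I will compute the pullback levelwise. By the universal property of $\cosk_0$, the unit map $W\to U$ is the identity in simplicial degree $0$, so in degree $0$ the pullback square defining $RW$ yields
\[
(RW)_0 \;\cong\; V_0\times_{W_0} W_0 \;\cong\; V_0,
\]
which is the constant $\Theta_n$-space on the set $W([0]_\Delta,[0]_\Theta)_0$, hence discrete. This also identifies $\ob(RW)$ with $W([0]_\Delta,[0]_\Theta)_0$.

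For (b), I break it into three checks. Fibrancy follows because the map $W\to U$ is a Reedy fibration (as noted in the paragraph preceding the lemma), $V$ and $U$ are Reedy fibrant as coskeleta of fibrant objects, and pullbacks of Reedy fibrant objects along Reedy fibrations are Reedy fibrant. For the Segal condition, I will use that for any simplicial object of the form $\cosk_0(A)$, the iterated Segal map is an isomorphism (since $V_m\cong V_0^{m+1}\cong V_1\times_{V_0}\cdots\times_{V_0} V_1$, and similarly for $U$). Writing $(RW)_m \cong V_m\times_{U_m} W_m$ and using that the pullback above is a homotopy pullback, the Segal map of $RW$ is identified with the pullback of the Segal maps of $V$, $U$, and $W$; the first two are isomorphisms and the last is a weak equivalence by assumption on $W$, so the Segal map of $RW$ is a weak equivalence. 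Finally, for $\mathcal S$-locality of mapping objects, a direct computation of the fiber of $(RW)_1\to (RW)_0\times(RW)_0$ over $(x_0,x_1)\in V_0\times V_0$ shows that
\[
M_{RW}^\Delta(x_0,x_1)\;\cong\; M_W^\Delta(\iota x_0,\iota x_1),
\]
where $\iota\colon V_0\hookrightarrow W_0$ is the canonical inclusion, since the contributions from $M_V^\Delta$ and $M_U^\Delta$ are terminal. This object is $\mathcal S$-local because $W$ is a complete Segal object.

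The main obstacle is bookkeeping in the Segal-map verification: one must confirm that the iterated pullback defining the Segal map commutes appropriately with the outer pullback $V\times_U W$, and that all the intermediate squares involved are homotopy pullbacks. This is guaranteed here because $W\to U$ is a Reedy fibration (so $V\times_U W$ is a homotopy pullback) and all objects in sight are Reedy fibrant, so every pullback appearing in the argument is automatically a homotopy pullback; the mapping-object identification then reduces to the observation that $V\to U$ is levelwise the inclusion of a discrete set, making the fiber computation strict.
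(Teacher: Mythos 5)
Your proposal is correct and follows essentially the same route as the paper: both verify discreteness in degree zero via $V$ and establish the Segal condition by identifying $(RW)_m$ levelwise as a pullback of $W_m$ along the coskeletal maps, using that the coskeleta have isomorphic Segal maps. Your version is somewhat more thorough in also explicitly checking Reedy fibrancy and the $\mathcal{S}$-locality of the mapping objects, points the paper's proof leaves implicit.
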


\begin{proof}
By construction, we know that $RW$ has the necessary discreteness property in degree 0 since $V$ does.   As we observed above, both $U$ and $V$ are both Segal objects, and so in particular $V$ is a Segal category object.  Using the definition of coskeleton, we get a diagram of $\Theta_n$-spaces in degree 1
\[ \xymatrix{(RW)_1 \ar[r] \ar[d] & W([0]_\Delta, [0]_\Theta)_0 \times W([0]_\Delta, [0]_\Theta)_0  \ar[d] \\
W_1 \ar[r] & W_0 \times W_0.}  \]
Looking at the corresponding pullback diagram in degree 2,
\[ \xymatrix{(RW)_2 \ar[r] \ar[d] & (W_{0, [0], 0})^3 \ar[d] \\
W_2 \simeq W_1 \times_{W_0} W_1 \ar[r] & (W_0)^3}  \]
we can see that $(RW)_2$ must be weakly equivalent to $(RW)_1 \times_{(RW)_0} (RW)_1$.  Proceeding similarly in higher degrees, we have that $RW$ satisfies the Segal condition.
\end{proof}

\begin{prop}
The functor $R$ is right adjoint to the inclusion functor $I$.
\end{prop}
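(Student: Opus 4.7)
The plan is to exhibit a natural bijection
\[
\Hom_{\Secat(\Thetansp)}(X, RW) \cong \Hom_{\css(\Thetansp)}(IX, W)
\]
by unwinding the pullback defining $RW$ together with the defining adjunction $(-)_0 \dashv \cosk_0$ in the simplicial direction. Write $S = W([0]_\Delta, [0]_\Theta)_0$ and $\overline{S}$ for the corresponding discrete (i.e.\ constant) $\Theta_n$-space, so that $V = \cosk_0(\overline{S})$. By the universal property of the pullback, a morphism $\varphi \colon X \to RW$ is the same data as a pair of morphisms $f \colon X \to W$ and $g \colon X \to V$ whose composites into $U = \cosk_0(W_0)$ agree. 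By the universal property of $\cosk_0$, the map $g$ is equivalently a map $g_0 \colon X_0 \to \overline{S}$ in $\Thetansp$, and the compatibility of $g$ with $f$ translates into the condition that $g_0$ post-composed with the canonical map $\overline{S} \to W_0$ agrees with $f_0 = f|_{X_0}$.

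The crux is to verify that when $X_0$ is discrete, i.e.\ a set $S_X$ regarded as a constant presheaf on $\Theta_n$, every morphism $\phi \colon X_0 \to W_0$ in $\Thetansp$ factors uniquely through $\overline{S} \to W_0$. Since $\Theta_n$ has terminal object $[0]_\Theta$, a natural transformation out of a constant presheaf is determined by its value at $[0]_\Theta$, so $\phi$ is equivalently a map of simplicial sets $S_X \to W_0([0]_\Theta)$; because $S_X$ is discrete this factors uniquely through the inclusion of $0$-simplices $S = W_0([0]_\Theta)_0 \hookrightarrow W_0([0]_\Theta)$, producing the required unique map $S_X \to S$ (and hence $X_0 \to \overline{S}$ as a map of constant presheaves) whose composite with $\overline{S} \to W_0$ recovers $\phi$.

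Combining the two steps, the data of a morphism $X \to RW$ for $X \in \Secat(\Thetansp)$ collapses to the data of a morphism $IX \to W$ alone, since $g$ is uniquely and automatically produced from $f_0$ via the factorization above. Naturality in $X$ and $W$ is immediate from the construction, which yields the desired adjunction $I \dashv R$. I expect no serious obstacle beyond careful bookkeeping of the categories involved (sets, discrete simplicial sets, constant $\Theta_n$-spaces, simplicial $\Theta_n$-spaces); the only substantive ingredient is the unique factorization in the second paragraph, which uses both the discreteness hypothesis on $X_0$ and the terminality of $[0]_\Theta$ in $\Theta_n$. Note that no fibrancy of $W$ is needed here: the pullback defining $RW$ and its universal property make sense at the level of underlying categories, so the adjunction holds in full generality.
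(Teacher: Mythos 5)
Your argument is correct and follows essentially the same route as the paper's proof: both reduce a map $X \to RW$ via the pullback and the $\cosk_0$ adjunction to the unique factorization of $X_0 \to W_0$ through the discrete object $\overline{S} \to W_0$, which is exactly where the discreteness of $X_0$ (and terminality of $[0]_\Theta$) is used. Your version just spells out this factorization step, and the resulting bijection, more explicitly than the paper does.
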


\begin{proof}
We need to prove that, for any object $Y$ in $\Secat(\Thetansp)$ and any object $W$ in $\css(\Thetansp)$,
\[ \Hom_{\Secat(\Thetansp)}(Y, RW) \cong \Hom_{\css(\Thetansp)}(IY, W). \]
Since $I$ is just the inclusion functor, note that $IY=Y$.

Suppose we have a map $Y \rightarrow W$.  Since $Y_0$ is assumed to be discrete, $Y_0 = Y([0]_\Delta, [0]_\Theta)_0$, viewed as a constant functor $\Thetanop \rightarrow \SSets$.  Therefore, if $V = \cosk_0(W([0]_\Delta,[0]_\Theta)_0)$ as before, the map $Y \rightarrow W$ factors uniquely through $V$.  The universal property of pullbacks then gives a unique map $Y \rightarrow RW$.  Thus, we have described the functor $\varphi \colon \Hom(Y, W) \rightarrow \Hom(Y, RW)$.  We claim that $\varphi$ is an isomorphism.

The map $\varphi$ is surjective, since any map $Y \rightarrow RW$ arises from the composite $Y \rightarrow RW \rightarrow W$.  Furthermore, it is injective because of the uniqueness constraints in the definition of $\varphi$.
\end{proof}

\begin{prop}
The adjoint pair $(I,R)$ defines a Quillen pair.
\end{prop}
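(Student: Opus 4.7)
The plan is to verify the two standard conditions for $(I,R)$ to be a Quillen pair: that $I$ preserves cofibrations and that $I$ preserves acyclic cofibrations. Since every object in both model categories is cofibrant, by Ken Brown's lemma it is equivalent (and more convenient) to check that $I$ preserves cofibrations and all weak equivalences.

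Preservation of cofibrations is automatic: the cofibrations in $\Secat(\Thetansp)$ are the monomorphisms by definition, and the cofibrations in $\css(\Thetansp)$ are also the monomorphisms, since the latter is a left Bousfield localization of the injective model structure on $\sPsh(\Delta \times \Theta_n)$. The inclusion $I$ obviously sends monomorphisms to monomorphisms.

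For weak equivalences, suppose $f \colon X \rightarrow Y$ is a weak equivalence in $\Secat(\Thetansp)$, so that by definition $Lf \colon LX \rightarrow LY$ is a Dwyer-Kan equivalence of Segal space objects, where $L$ denotes fibrant replacement in $\Secat(\Thetansp)$. Since $LX$ and $LY$ are Segal category objects, they are in particular Segal objects in $\Thetansp$, and Theorem \ref{dkcss} then gives that $Lf$ is a weak equivalence in $\css(\Thetansp)$. Applying the two-out-of-three property of weak equivalences in $\css(\Thetansp)$ to the naturality square
\[ \xymatrix{X \ar[r]^f \ar[d] & Y \ar[d] \\ LX \ar[r]^{Lf} & LY} \]
reduces the problem to showing that the Segal category fibrant replacement map $W \rightarrow LW$ is a weak equivalence in $\css(\Thetansp)$ for every $W$.

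The main obstacle is precisely this last reduction. The approach is to realize $W \rightarrow LW$ as a transfinite composition of pushouts along generating acyclic cofibrations of $\Secat(\Thetansp)$ and to verify that each such generator, viewed in $\css(\Thetansp)$, lies in $\overline{\mathcal S^\Delta}$. These generators encode the Segal condition, the recursive condition inherited from $\mathcal S$, and a discreteness-preserving analogue of completeness, all of which are already contained in (or equivalent modulo $\overline{\mathcal S^\Delta}$ to) maps in $\mathcal S^\Delta$; so each pushout is in $\overline{\mathcal S^\Delta}$ and hence so is the transfinite composition. An alternative, more concrete route is to use the completion procedure of Lemma \ref{completion}: given $W$, take a Reedy fibrant replacement preserving the discreteness of $W([0]_\Delta,[0]_\Theta)$, then complete and apply the essentially-constant correction as in the proof of that lemma. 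The result is a complete Segal object which serves as a model for $LW$, and each stage is by construction a weak equivalence in $\css(\Thetansp)$; two-out-of-three then yields the claim.
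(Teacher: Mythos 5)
Your argument follows the same strategy as the paper's: cofibrations are the monomorphisms in both model structures, and the conversion of Dwyer--Kan equivalences of Segal objects into weak equivalences in $\css(\Thetansp)$ is exactly Theorem \ref{dkcss}. The genuine difference is that you make explicit a step the paper's proof elides: to pass from ``$Lf$ is a weak equivalence in $\css(\Thetansp)$'' to ``$f$ is one,'' you need the localization map $W \rightarrow LW$ to be a weak equivalence in $\css(\Thetansp)$, and you correctly isolate this via two-out-of-three as the remaining obstacle. Your first route for closing it --- exhibiting $W \rightarrow LW$ as a transfinite composition of pushouts along maps lying in $\overline{\mathcal S^\Delta}$ --- is the right idea and reflects how the functor $L$ is actually built in the first paper of the series, though as written it is an assertion rather than a verification; note also that Segal category objects carry no completeness condition, so there is no ``discreteness-preserving analogue of completeness'' among the generators to account for. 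Your alternative route via Lemma \ref{completion} is shakier: the completion $\widehat W$ is a fibrant replacement in $\css(\Thetansp)$, not in $\Secat(\Thetansp)$ (its degree-zero object is not discrete), so it cannot directly ``serve as a model for $LW$'' without a further comparison of $\widehat W$ with $\widehat{LW}$. Finally, Ken Brown's lemma is not needed in the direction you invoke it: preserving cofibrations and all weak equivalences already implies preserving acyclic cofibrations.
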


\begin{proof}
First, observe that $I$ preserves cofibrations, since in each model category the cofibrations are precisely the monomorphisms.

It remains to show that $I$ preserves acyclic cofibrations.  First recall that a map in $\Secat(\Thetansp)$ is weak equivalence precisely if the induced map on fibrant replacements is a Dwyer-Kan equivalence.  But these fibrant replacements are Segal category objects, and therefore Segal space objects.  By Corollary \ref{dkcss}, this map must be a weak equivalence in $\css(\Thetansp)$.  Therefore, the inclusion functor $I$ preserves weak equivalences, in particular acyclic cofibrations.
\end{proof}

\begin{theorem}
The adjoint pair $(I, R)$ is a Quillen equivalence.
\end{theorem}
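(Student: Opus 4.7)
The plan is to verify the standard Quillen equivalence criterion: for every $X$ in $\Secat(\Thetansp)$ and every fibrant $W$ in $\css(\Thetansp)$, a map $f\colon IX\to W$ is a weak equivalence in $\css(\Thetansp)$ if and only if its adjoint $g\colon X\to RW$ is a weak equivalence in $\Secat(\Thetansp)$. I will do this by (a) showing the unit $\eta_X\colon X\to RIX$ is an isomorphism, (b) showing the counit $\epsilon_W\colon IRW\to W$ is a weak equivalence in $\css(\Thetansp)$ for every fibrant $W$, and then (c) combining these with Theorem \ref{dkcss} and the triangle identities.

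For (a), the discreteness of $X_0$ for objects of $\Secat(\Thetansp)$ means that $X_0$ is the constant $\Theta_n$-space on the set $X([0]_\Delta,[0]_\Theta)_0$. Hence the two objects $\cosk_0(X_0)$ and $\cosk_0(X([0]_\Delta,[0]_\Theta)_0)$ used in the construction of $R$ coincide, and the pullback defining $RIX$ collapses to $X$ itself, with $\eta_X$ the identity map.

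For (b), both $RW$ and $W$ are Segal objects (indeed $RW$ is a Segal category object by the earlier lemma), so by Theorem \ref{dkcss} it suffices to show that $\epsilon_W$ is a Dwyer-Kan equivalence. Unwinding the pullback gives $(RW)_k\cong W_k\times_{W_0^{k+1}} W([0]_\Delta,[0]_\Theta)_0^{k+1}$. Reading off $k=0$ yields $\ob(RW)=W([0]_\Delta,[0]_\Theta)_0=\ob(W)$, so $\epsilon_W$ is the identity on object sets; reading off $k=1$ identifies $M_{RW}^\Delta(x,y)$ canonically with $M_W^\Delta(x,y)$ as the fiber of $(RW)_1\to W([0]_\Delta,[0]_\Theta)_0^2$ over $(x,y)$. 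Thus $\epsilon_W$ is the identity on mapping objects as well, the induced functor $\Ho(RW)\to\Ho(W)$ is literally the identity, and $\epsilon_W$ is a Dwyer-Kan equivalence.

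For (c), the triangle identity together with (a) gives $f=\epsilon_W\circ Ig$, so by two-out-of-three and (b), $f$ is a weak equivalence in $\css(\Thetansp)$ if and only if $Ig$ is. To link $Ig$ being a weak equivalence in $\css(\Thetansp)$ with $g$ being one in $\Secat(\Thetansp)$, factor $g$ through a fibrant replacement $X\to LX$; both $LX$ and $RW$ are Segal category objects, so by the definition of weak equivalence in $\Secat(\Thetansp)$, $g$ is a weak equivalence if and only if $g^L\colon LX\to RW$ is a Dwyer-Kan equivalence, which by Theorem \ref{dkcss} holds exactly when $Ig^L$ is a weak equivalence in $\css(\Thetansp)$. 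Since $I$ preserves weak equivalences (applied to $X\to LX$), two-out-of-three in $\css(\Thetansp)$ gives $Ig$ is a weak equivalence iff $Ig^L$ is, closing the chain. The main obstacle is the pullback computation in step (b) identifying the mapping objects of $RW$ with those of $W$; once that is in hand, the rest follows formally from Theorem \ref{dkcss}, two-out-of-three, and the already-established preservation of weak equivalences by $I$.
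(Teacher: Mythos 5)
Your proposal is correct and takes essentially the same route as the paper: the substantive step in both is showing that the counit $RW \to W$ at a fibrant $W$ is a Dwyer--Kan equivalence by reading off the pullback defining $RW$ level by level, and then invoking Theorem \ref{dkcss} together with the definition of weak equivalences in $\Secat(\Thetansp)$ to transfer along the adjunction. The only difference is organizational (you verify the unit/counit form of the criterion explicitly, while the paper phrases it as ``$I$ reflects weak equivalences plus the counit is a weak equivalence''), and your identification of $(RW)_k$ and of the mapping objects is a slightly more explicit version of what the paper asserts.
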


\begin{proof}
Using the argument in the previous proof, we can see that $I$ reflects weak equivalences, and in particular those between cofibrant objects.  Thus, it only remains to show that, given any fibrant object $W$ in $\css(\Thetansp)$, the map $IRW = RW \rightarrow W$ is a weak equivalence in $\css(\Thetansp)$.  Observe that this map is just the left-hand vertical map $j$ in the pullback diagram
\[ \xymatrix{RW \ar[r] \ar[d] & V \ar[d] \\
W \ar[r] & U.} \]
Since both $W$ and $RW$ are Segal space objects, it suffices to verify that $j$ is a Dwyer-Kan equivalence.  By the construction of $RW$, observe that $\ob(W)=\ob(RW)$, and $j$ is the identity map on object sets.  For any $x,y \in \ob(W)$, consider $M_{RW}^\Delta(x,y) \rightarrow M_W^\Delta(x,y)$.  Using the pullback describing $(RW)_1$ and restricting to this mapping object, we see that this map must be a weak equivalence in $\Thetansp$.
\end{proof}

\end{document}